\newtheorem{theorem}{Theorem}[section]
\newtheorem{lemma}[theorem]{Lemma}
\newtheorem{corollary}[theorem]{Corollary}
\newtheorem{proposition}[theorem]{Proposition}
\newtheorem*{proposition*}{Proposition}
\theoremstyle{definition}
\newtheorem*{remark*}{Remark}
\numberwithin{equation}{section}
\newcommand{\R}{\mathbb{R}}
\newcommand{\Z}{\mathbb{Z}}
\newcommand{\Q}{\mathbb{Q}}
\newcommand{\C}{\mathbb{C}}
\renewcommand{\H}{\mathbb{H}}
\newcommand{\vprod}[1]{\left\langle #1 \right\rangle}
\renewcommand{\Re}{\operatorname{Re}}
\renewcommand{\Im}{\operatorname{Im}}
\newcommand{\abs}[1]{\left\lvert #1 \right\rvert}
\newcommand{\norm}[1]{\left\lVert #1 \right\rVert}
\newcommand{\lrp}[1]{\left(#1\right)}
\newcommand{\lrc}[1]{\left\{#1\right\}}
\newcommand{\ptmatrix}[4]{\left( \begin{smallmatrix} #1 & #2 \\ #3 & #4 \end{smallmatrix} \right)}
\newcommand{\ptMatrix}[4]{\left( \begin{matrix} #1 & #2 \\ #3 & #4 \end{matrix} \right)}
\newcommand{\SLtZ}{\operatorname{SL}_2(\Z)}
\newcommand{\sgn}{\operatorname{sgn}}
\DeclareMathOperator*{\res}{Res}
\newcommand{\V}{\mathcal V}
\DeclareMathOperator{\SL}{SL}
\DeclareMathOperator{\Mp}{Mp}
\DeclareMathOperator{\im}{Im}
\DeclareMathOperator{\re}{Re}
\DeclareMathOperator{\tr}{tr}
\newcommand{\pfrac}[2]{\left(\frac {#1}{#2}\right)}
\renewcommand{\bar}{\overline}
\newcommand{\ep}{\varepsilon}
\newcommand{\e}{\mathfrak{e}}
\title[Formulas for coefficients of mock theta functions]{Transcendental formulas for the coefficients of Ramanujan's mock theta functions}
\author{Nickolas Andersen and Gradin Anderson}
\date{\today}
\begin{document}

\begin{abstract}
Ramanujan's 1920 last letter to Hardy contains seventeen examples of mock theta functions which he organized into three ``orders.'' 
The most famous of these is the third-order function $f(q)$ which has received the most attention of any individual mock theta function in the intervening century.
In 1964, Andrews---improving on a result of Dragonette---gave an asymptotic formula for the coefficients of $f(q)$ and conjectured an exact formula for the coefficients: a conditionally convergent series that closely resembles the Hardy--Ramanujan--Rademacher formula for the partition function.
To prove that the conjectured series converges, it is necessary to carefully measure the cancellation among the Kloosterman sums appearing in the formula.
Andrews' conjecture was proved four decades later by Bringmann and Ono. Here we prove exact formulas for all seventeen of the mock theta functions appearing in Ramanujan's last letter.
Along the way, we prove a general theorem bounding sums of Kloosterman sums for the Weil representation attached to a lattice of odd rank.
\end{abstract}

\maketitle

\section{Introduction}

One of the early triumphs of the Hardy-Ramanujan circle method is the famous formula of Rademacher \cite{rademacher}
\begin{equation} \label{eq:hrr}
    p(n) = \frac{2\pi}{(24n-1)^{3/4}} \sum_{c=1}^\infty \frac{A_c(n)}{c} I_{3/2} \pfrac{\pi \sqrt{24n-1}}{6c}
\end{equation}
for the partition function $p(n)$.
Here
\begin{equation} \label{eq:i3/2-def}
    I_{3/2}(x) = \sqrt{\frac{2}{\pi x}} \left(\cosh x - x^{-1}\sinh x\right)
\end{equation}
is the $I$-Bessel function and $A_c(n)$ is the Kloosterman sum
\[
    A_c(n) = \sum_{\substack{d\bmod c \\ (c,d)=1}} e^{\pi i s(d,c)} e\pfrac{-dn}{c},
\]
where $s(d,c)$ is the Dedekind sum.
Since the terms of the sum \eqref{eq:hrr} are given as values of transcendental functions, we refer to Rademacher's formula, and others like it, as transcendental formulas.
By contrast, an algebraic formula for $p(n)$ is given in \cite{bruinier-ono}.
By \eqref{eq:i3/2-def} we have the upper bound $I_{3/2}(\alpha/c) \ll c^{-3/2}$, and combining this with the trivial upper bound $|A_c(n)| \leq c$ we immediately see that the series \eqref{eq:hrr} converges absolutely.

Shortly after Rademacher proved \eqref{eq:hrr}, Lehmer \cite{lehmer} made an extensive study of the Kloosterman sums $A_c(n)$, giving an explicit evaluation for $c$ a prime power, together with a twisted multiplicativity relation in $c$.
This provides a reasonably fast method for evaluating $A_c(n)$ that was recently implemented by Johansson \cite{Johansson} to compute many values of $p(n)$ for large $n$ up to $n=10^{20}$.
Lehmer's results also prove the Weil-type bound
\begin{equation} \label{eq:weil-bound-intro}
    |A_c(n)| \leq 2^{\omega_0(c)}\sqrt{c},
\end{equation}
where $\omega_0(c)$ is the number of distinct odd primes dividing $c$.
Some years after Lehmer's study of $A_c(n)$, Whiteman \cite{whiteman} proved the formula (which he attributes to Selberg)
\begin{equation} \label{eq:whiteman}
    A_c(n) = \sqrt{\frac c{3}} \sum_{\substack{b\bmod {6c} \\b^2 \equiv 1-24n(24c)}} \pfrac{12}{b} \cos\pfrac{\pi b}{6c}.
\end{equation}
Since the sum in \eqref{eq:whiteman} is sparse, this expression provides  another fast method for computing $A_c(n)$ and an immediate proof that $A_c(n) \ll c^{1/2+\ep}$.

The partition function famously appears as the Fourier coefficients of the weight $-1/2$ modular form $\eta^{-1}$, where $\eta$ is the Dedekind eta function.
An alternative expression for the generating function of $p(n)$ is the Eulerian series
\[
    \sum_{n=0}^\infty p(n) q^n = 1 + \sum_{n=1}^\infty \frac{q^{n^2}}{(1-q)^2(1-q^2)^2\cdots (1-q^n)^2}.
\]
This comes from separating the Ferrers diagram of a partition into its Durfee square of size $n^2$ and two partitions with parts of size $\leq n$.
A slight modification of this series yields Ramanujan's third order mock theta function 
\[
    f(q) = 1 + \sum_{n=1}^\infty \frac{q^{n^2}}{(1+q)^2(1+q^2)^2\cdots (1+q^n)^2},
\]
which first appeared in Ramanujan's last letter to Hardy in 1920.
The function $f(q)$ is not a modular form, but there is a nonholomorphic function $\mathcal N$ such that the function $e\pfrac{-\tau}{24}f(e^{2\pi i \tau}) + \mathcal N(\tau)$ transforms like a modular form of weight $1/2$ on $\Gamma_0(2)$; this completed function is an example of a harmonic Maass form.
The $n$-th Fourier coefficient $\alpha_f(n)$ of $f(q)$ counts the number of partitions of $n$ with even rank minus the number with odd rank.

The problem of estimating the size of $\alpha_f(n)$ begins with the claimed first-order asymptotic
\begin{equation} \label{eq:alpha-first-order}
    \alpha_f(n) \sim (-1)^{n-1} \sqrt 6 \frac{e^{\frac\pi{12}\sqrt{24n-1}}}{\sqrt{24n-1}}
\end{equation}
from Ramanujan's last letter. 
This was first proved in 1951 by Dragonette \cite{dragonette}, who also extended \eqref{eq:alpha-first-order} to include $\approx\sqrt n$ terms of exponential size with an error of $O(\sqrt n \log n)$.
Andrews \cite{andrews} further improved this in 1964 by giving an asymptotic series with error $O(n^\ep)$; in fact, he conjectured that the asymptotic series converges conditionally to $\alpha_f(n)$, i.e.
\begin{equation} \label{eq:andrews-dragonette}
    \alpha_f(n) = \frac{\pi}{\sqrt[4]{24n-1}} \sum_{c=1}^\infty \frac{A_{2c}^*(n)}{c} I_{1/2}\pfrac{\pi \sqrt{24n-1}}{12c},
\end{equation}
where
\begin{equation}
    A_{2c}^*(n) = 
    \begin{cases}
        (-1)^{\frac{c+1}{2}} A_{2c}(n) & \text{ if $c$ is odd}, \\
        (-1)^{\frac{c}{2}} A_{2c}(n - \tfrac c2) & \text{ if $c$ is even}.
    \end{cases}
\end{equation}
Finally, in 2006 Bringmann and Ono \cite{BOF} proved that the formula \eqref{eq:andrews-dragonette} holds, and in 2019 Ahlgren and Dunn \cite{AD} proved that the series does not converge absolutely.
An algebraic formula for $\alpha_f(n)$ is given in \cite{bruinier-schwagenscheidt}.

There is an alternative expression for $A_c^*(n)$ that more closely resembles \eqref{eq:whiteman} and elucidates the close symmetry between the formulas \eqref{eq:hrr} and \eqref{eq:andrews-dragonette}.
Define
\begin{equation}
    A_c(n|f) = \sqrt{\frac{c}{3}} \sum_{\substack{b\bmod 6c \\ b^2 \equiv 1-24n(24c)}} \pfrac{-12}{b} \sin\pfrac{-\pi b}{6c}.
\end{equation}
By replacing $b$ by $b+6c$, we see that $A_c(n|f) = 0$ if $c$ is odd.
On the other hand, we have the relation $A_{2c}(n|f) = A_{2c}^*(n)$,
which can be obtained by replacing $b$ by $b(1-6(-1)^{\lfloor (c+1)/2\rfloor}c)$ in the sum defining $A_{2c}(n|f)$. Thus we have
\begin{equation} \label{eq:alpha-f-formula}
    \alpha_f(n) = \frac{2\pi}{\sqrt[4]{24n-1}} \sum_{c=1}^\infty \frac{A_c(n|f)}{c} I_{1/2}\pfrac{\pi \sqrt{24n-1}}{6c}.
\end{equation}

Since $I_{1/2}(x) = (\frac2{\pi x})^{1/2}\sinh x$, the trivial bound $|A_c(n)|\leq c$ is not sufficient to prove convergence of \eqref{eq:andrews-dragonette}.
In fact, the Weil bound \eqref{eq:weil-bound-intro} is also not sufficient. 
Instead, it is necessary to take into account the sign changes of $A_c(n)$ as $c$ varies; this was done in \cite{BOF} by modifying an argument of Hooley \cite{hooley}, and the resulting tail bound was improved on in \cite{AD} using the spectral theory of automorphic forms.

Ramanujan's last letter to Hardy contains a total of seventeen examples of mock theta functions, including $f(q)$.
To each function he assigned an ``order,'' either 3, 5, or 7.
The main purpose of this paper is to give formulas like \eqref{eq:hrr} and \eqref{eq:alpha-f-formula} for the coefficients of each of Ramanujan's mock theta functions.
If $\vartheta$ denotes one of the mock theta functions, let $\alpha_\vartheta(n)$ denote the coefficient of $q^n$ in $\vartheta(q)$.

The mock theta functions of order 3 are
\begin{alignat*}{2}
	f(q) &= \sum_{n=0}^\infty \frac{q^{n^2}}{(-q;q)^2_n}, \quad & \quad \phi(q) &= \sum_{n=0}^\infty \frac{q^{n^2}}{(-q^2;q^2)_n}, \\
	\psi(q) &= \sum_{n=1}^\infty \frac{q^{n^2}}{(q;q^2)_n}, & \chi(q) &= \sum_{n=0}^\infty \frac{q^{n^2}(-q;q)_n}{(-q^3;q^3)_{n}}. 
\end{alignat*}
Here we have used the standard $q$-Pochhammer notation $(a;q)_n := \prod_{m=0}^{n-1} (1-aq^m)$.
Ramanujan's lost notebook contains the third order mock theta function
\[
    \omega(q) = \sum_{n=0}^\infty \frac{q^{2n(n+1)}}{(q;q^2)_{n+1}^2},
\]
which is naturally paired with $f(q)$ via their transformation laws.
Motivated by a conjecture of Andrews \cite{andrews-partitions-2003},
Garthwaite \cite{Garthwaite} proved a transcendental formula for the coefficients of $\omega(q)$ using methods similar to those in \cite{BOF}. 
Our first theorem gives formulas for these five third order mock theta functions.

\begin{theorem} \label{thm:third}
    Let $\vartheta \in \{f, \omega, \chi, \phi, \psi\}$, and let $N$, $d_n$, and $k_n$ be given by the following table.
    \[
        \renewcommand{\arraystretch}{1.2}
        \begin{tabular}{|c|c|c|c|c|c|}
            \hline
             & $f$ & $\omega$ & $\chi$ & $\phi$ & $\psi$  \\
             \hline
             $N$ & $6$ & $6$ & $18$ & $24$ & $24$   \\
             \hline
            $d_n$ & $24n-1$ & $12n+8$ &  $24n-1$ & $24n-1$ & $96n-4$   \\
            \hline
            $k_n$ & $1$ & $-\tfrac 12(-1)^n$ & $1$ & $(-1)^n$ & $1$  \\ \hline
        \end{tabular}
    \]
    Then
    \[
        \alpha_{\vartheta}(n) = \frac{2\pi}{\sqrt[4]{d_n}} \sum_{c>0} \frac{A_c(n|\vartheta)}{c} I_{1/2}\pfrac{\pi \sqrt{d_n}}{Nc},
    \]
    where, if $\vartheta \neq \psi$,
    \begin{equation} \label{eq:third-order-theorem-kloo-sum}
        A_c(n|\vartheta) = k_n \sqrt{\frac{2c}{N}} \sum_{\substack{b \bmod Nc \\ b^2 \equiv -d_n (4Nc)}} \pfrac{-3}{b} \sin\pfrac{-\pi b}{Nc}.
    \end{equation}
    The sum $A_c(n|\psi)$ is given by \eqref{eq:third-order-theorem-kloo-sum} with the added condition that $b\equiv 2, 14, 34, 46 \pmod{48}$.
\end{theorem}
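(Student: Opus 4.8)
The plan is to realize each mock theta function $\vartheta$ as the holomorphic part of a harmonic Maass form of weight $\tfrac12$ transforming under the Weil representation $\rho$ attached to a lattice of odd rank (here rank one, whose unary theta series is the shadow), and then to apply a general Rademacher-type exact formula for such forms. For each $\vartheta$ there is a rational shift, recorded by the pair $(N, d_n)$, so that a suitable power of $q$ times $\vartheta(q)$ is the holomorphic part of a distinguished component $\widehat F_\vartheta$ of a vector-valued harmonic Maass form; the exponents of its $q$-expansion are normalized so that $d_n$ plays the role of a discriminant in the congruence $b^2 \equiv -d_n$. That these completions exist is classical: the transformation laws of the five functions were determined by Ramanujan, Watson, and Zwegers, and they pin down both the level $N$ and the multiplier system. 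The first step is therefore bookkeeping — identifying, for each $\vartheta$, the lattice, the distinguished component, and the principal part of $\widehat F_\vartheta$ at every cusp.

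The second step is to build the Maass--Poincar\'e series $\mathbb P_\vartheta$ of weight $\tfrac12$ for $\rho$ having exactly the principal part of $\widehat F_\vartheta$. Unfolding against the $I$-Bessel Whittaker function produces a Fourier expansion whose $n$-th coefficient has precisely the shape
\[
    (\text{const}) \cdot \frac{1}{\sqrt[4]{d_n}} \sum_{c>0} \frac{K_c(n)}{c}\, I_{1/2}\pfrac{\pi\sqrt{d_n}}{Nc},
\]
where $K_c(n)$ is the Kloosterman sum assembled from the entries of $\rho$ on the relevant scaling matrices. Since $\widehat F_\vartheta - \mathbb P_\vartheta$ is holomorphic of weight $\tfrac12$, it is by Serre--Stark a linear combination of unary theta series; comparing principal parts — and, for $\psi$, tracking which residue classes the distinguished component actually sees — forces this difference to vanish on the coefficients of interest, so that $\alpha_\vartheta(n)$ equals the $n$-th coefficient of $\mathbb P_\vartheta$.

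The third step, and the main computational obstacle, is to evaluate $K_c(n)$ explicitly and convert it into the sparse character sum \eqref{eq:third-order-theorem-kloo-sum}. This is the generalization to odd-rank Weil representations of the Selberg--Whiteman identity \eqref{eq:whiteman}: one writes the entries of $\rho(\gamma)$ as normalized quadratic Gauss sums, carries out the summation over $d \bmod c$ by completing the square, and reparametrizes the surviving terms by the solutions $b$ of $b^2 \equiv -d_n \pmod{4Nc}$, with the factor $\pfrac{-3}{b}$ and the term $\sin(-\pi b/(Nc))$ emerging from the Gauss-sum normalization and the choice of Whittaker function. The congruence restriction in the $\psi$ case reflects that only the residues $b \equiv 2,14,34,46 \pmod{48}$ feed its distinguished component. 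I expect the matching of multiplier phases, together with the $2$-adic subtleties at the even levels $N \in \{6,18,24\}$, to be the most delicate part of the argument.

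Finally, convergence of the resulting series is not a formality: because $I_{1/2}(\alpha/c) \ll c^{-1/2}$, the terms have size $c^{-1+\ep}$ even under the Weil bound \eqref{eq:weil-bound-intro}, so the series does not converge absolutely. To close the argument I would invoke this paper's general bound on sums of Kloosterman sums for the Weil representation of an odd-rank lattice, which yields a power saving for $\sum_{c \le X} K_c(n)$; by partial summation this both shows that the tail of the series tends to zero and retroactively justifies the unfolding that produced $\mathbb P_\vartheta$.
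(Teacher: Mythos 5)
Your proposal is correct in outline, but it takes a genuinely different route from the paper. You follow the direct strategy of \cite{BOF} and \cite{Garthwaite} (made rigorous along the lines of \cite{AD}): realize each completed mock theta function as a component of a vector-valued harmonic Maass form for $\rho_{L(N)}$, construct a weight-$1/2$ vector-valued Maass--Poincar\'e series with the same principal part, show the difference is holomorphic and hence --- by the vector-valued Serre--Stark theorem, i.e.\ Theorem~5.1 of \cite{EZ} together with \cite{Skoruppa} --- a combination of unary theta series, and read the formula off the Poincar\'e coefficients. The paper deliberately avoids weight-$1/2$ Poincar\'e series: it forms weight-$0$ Atkin--Lehner Poincar\'e series $f_m$ on $\Gamma_0(N)$, applies the Millson theta lift of \cite{alfes,alfes-schwagenscheidt} to produce the weight-$1/2$ form matching $H$ (the uniqueness input, $S_{1/2}(\rho_{L(N)})\cong J_{1,N}^{\text{cusp}}=\{0\}$, is the same in both routes), and then unfolds the resulting algebraic CM-traces of Theorem~\ref{thm:vec-value-coeffs} into the transcendental formula in Section~\ref{sec:transcendental-formulas}. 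The paper's detour buys the algebraic trace formulas as intermediate output, at the cost of the Alfes--Schwagenscheidt principal-part bookkeeping (e.g.\ Lemma~\ref{lem:const-18} for $N=18$); your route is more direct, but you must still evaluate the Weil-representation Kloosterman sums as sparse character sums (your ``Selberg--Whiteman generalization''), which the paper outsources to \cite{AAW} plus the simplification in Section~\ref{sec:kloo-simp}. Two precisions. First, the completions of $\phi$, $\psi$, $\chi$ are not classical bookkeeping: Gordon--McIntosh \cite{Gordon-McIntosh} supply only scalar transformation laws, and assembling them into forms for $\rho_{L(N)}$ --- with the auxiliary functions $\nu,\sigma,\xi,\rho$ and the residue-class splittings that produce the $k_n$ column --- is genuinely part of the proof (Appendix~\ref{appendix}). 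Second, the theta-series ambiguity in your Serre--Stark step is harmless not because of principal-part matching (theta series have none) but because unary theta coefficients are supported on square exponents while $d_n$ is never a perfect square (e.g.\ $24n-1\equiv 23\pmod{24}$); this is worth saying explicitly. With those points supplied, your plan works, and both routes rest on the same analytic engine: the Goldfeld--Sarnak bound of Corollary~\ref{cor:gs}, which in your approach is needed both to define $\mathbb{P}_\vartheta$ at the harmonic point by analytic continuation in $s$ and to sum the resulting conditionally convergent series.
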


The ten mock theta functions of order 5 are
\begin{alignat*}{2}
	f_0(q) &= \sum_{n=0}^\infty \frac{q^{n^2}}{(-q;q)_n}, \quad & \quad f_1(q) &= \sum_{n=0}^\infty \frac{q^{n(n+1)}}{(-q;q)_n}, \\
	F_0(q) &= \sum_{n=0}^\infty \frac{q^{2n^2}}{(q;q^2)_n}, & F_1(q) &= \sum_{n=0}^\infty \frac{q^{2n(n+1)}}{(q;q^2)_{n+1}}, \\
	\phi_0(q) &= \sum_{n=0}^\infty q^{n^2}(-q;q^2)_n, \qquad & \qquad \phi_1(q) &= \sum_{n=0}^\infty q^{(n+1)^2}(-q;q^2)_n, \\
	\psi_0(q) &= \sum_{n=0}^\infty q^{\frac12(n+1)(n+2)}(-q;q)_n, \qquad & \qquad \psi_1(q) &=  \sum_{n=0}^\infty q^{\frac12n(n+1)} (-q;q)_n, \\
    \chi_0(q) &= \sum_{n=0}^\infty \frac{q^{n}}{(q^{n+1};q)_n}, \qquad & \qquad \chi_1(q) &= \sum_{n=0}^\infty \frac{q^{n}}{(q^{n+1};q)_{n+1}}.
\end{alignat*}
Ramanujan used the same notation for each of the two families of fifth order functions, so we include subscripts, following the convention established by Watson.
The functions $\chi_0$ and $\chi_1$ can be obtained via simple combinations of some of the others, namely
\begin{align} \label{eq:chi0-F0-phi0}
    \chi_0(q) &= 2F_0(q) - \phi_0(-q), \\ \label{eq:chi1-F1-phi1}
    \chi_1(q) &= 2F_1(q) + q^{-1}\phi_1(-q).
\end{align}

Our second theorem gives transcendental formulas for the coefficients of four of the fifth order mock theta functions.
See Theorem~\ref{thm:fifth-2} below for the $f_0(q)$, $f_1(q)$, $F_0(q)$, and $F_1(q)$ formulas.
Then the $\chi_0(q)$ and $\chi_1(q)$ formulas can be obtained immediately by using \eqref{eq:chi0-F0-phi0} and \eqref{eq:chi1-F1-phi1}.

\begin{theorem} \label{thm:fifth}
    Let $\vartheta \in \{\psi_0, \psi_1, \phi_0, \phi_1\}$ and let $d_n$ and $k_n$ be given by the following table.
    \[
    \renewcommand{\arraystretch}{1.2}
        \begin{tabular}{|c|c|c|c|c|}
            \hline
             & $\psi_0$ & $\psi_1$ & $\phi_0$ & $\phi_1$ \\
             \hline
            $d_n$ & $240n-4$ & $240n+44$ & $120n-1$ & $120n-49$  \\
            \hline
            $k_n$ & $-i$ & $1$ & $(-1)^{n}$ & $-i(-1)^n$ \\ \hline
        \end{tabular}
    \]
    Then
    \begin{equation} \label{eq:phi0-formula}
        \alpha_\vartheta(n) = \frac{2\pi}{\sqrt[4]{d_n}} \sum_{c>0} \frac{A_c(n|\vartheta)}{c} I_{1/2}\pfrac{\pi\sqrt{d_n}}{60c},
    \end{equation}
    where
    \begin{equation}
        A_c(n|\vartheta) = k_n \sqrt{\frac{c}{30}} \sum_{\substack{b\bmod 60c \\ b^2 \equiv -d_n (240c)}} \pfrac{3}{b} \chi_5(2,b) \sin\pfrac{-\pi b}{60c},
    \end{equation}
    and $\chi_5(2,\cdot)$ is the Dirichlet character modulo $5$ that maps $2$ to $i$.
\end{theorem}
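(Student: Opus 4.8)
The plan is to run the Hardy--Ramanujan--Rademacher circle method in the vector-valued setting, paralleling the derivation of \eqref{eq:alpha-f-formula} but with the level and lattice data of the fifth order. The first step is to exhibit each of $\psi_0,\psi_1,\phi_0,\phi_1$ as (a suitable power of $q$ times) the holomorphic part of a harmonic Maass form of weight $1/2$. Using the known modular transformation laws of the fifth-order functions (Zwegers, Gordon--McIntosh, Hickerson), together with the shifts recorded by $d_n$, I would assemble these four functions into the components of a single vector-valued harmonic Maass form $\widehat F$ of weight $1/2$ transforming under the Weil representation $\rho_L$ attached to a rank-one lattice $L$ whose discriminant form is supported at the primes $2,3,5$. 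This support is forced by the modulus $60$ in $b \bmod 60c$ and by the congruence $b^2 \equiv -d_n \pmod{240c}$, where $240 = 4\cdot 60$. The shadow of $\widehat F$ is a weight $3/2$ unary theta series, consistent with the odd rank of $L$.

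Second, I would recover the coefficients $\alpha_\vartheta(n)$ by the Rademacher method. Because the relevant space is governed by the principal parts of $\widehat F$ at the cusps, $\widehat F$ is a linear combination of Maass--Poincar\'e series of weight $1/2$ for $\rho_L$; computing the Fourier expansion of such a Poincar\'e series by the usual unfolding produces the main term as an $I_{1/2}$-Bessel function and the $c$-sum as a Kloosterman sum for $\rho_L$. This reproduces the shape $\frac{2\pi}{\sqrt[4]{d_n}}\sum_{c>0} \frac{A_c(n|\vartheta)}{c} I_{1/2}\pfrac{\pi\sqrt{d_n}}{60c}$ of \eqref{eq:phi0-formula}, with $A_c(n|\vartheta)$ a Weil-representation Kloosterman sum whose entries are read off from the principal parts at each cusp.

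Third comes the explicit evaluation of these Kloosterman sums, which is where the fifth-order features appear. For an odd-rank lattice the Weil-representation Kloosterman sums are generalized Sali\'e sums, and diagonalizing the Gauss sums in the formula for $\rho_L\ptmatrix{a}{b}{c}{d}$ converts the sum over residues into a sparse sum over the square roots $b$ of $-d_n \pmod{240c}$. The product of the local characters at $2,3,5$ should collapse to $\pfrac{3}{b}\chi_5(2,b)$ --- the quartic character $\chi_5(2,\cdot)$ being the genuinely new, order-five ingredient relative to Theorem~\ref{thm:third} --- and the surviving exponential becomes $\sin\pfrac{-\pi b}{60c}$, with the normalization $\sqrt{c/30}$ and the constant $k_n$ emerging from the Gauss-sum and cusp-width factors.

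Finally, since \eqref{eq:phi0-formula} does not converge absolutely, I would establish conditional convergence by feeding the explicit Sali\'e form of $A_c(n|\vartheta)$ into the paper's general bound for sums of Kloosterman sums attached to the Weil representation of an odd-rank lattice, and then applying partial summation against the monotone factor $I_{1/2}\pfrac{\pi\sqrt{d_n}}{60c} \ll c^{-1/2}$. The main obstacle I anticipate is not the convergence --- that is handled by the general theorem once the sums are in Sali\'e form --- but the explicit bookkeeping in the first and third steps: correctly identifying $L$ and the principal parts of $\widehat F$ at every cusp so as to pin down the exact constants $d_n$, $k_n$, and the congruence classes, and carrying out the Gauss-sum computation so that the local characters assemble precisely into $\pfrac{3}{b}\chi_5(2,b)$.
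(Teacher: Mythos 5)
Your proposal is correct in outline, but it takes a genuinely different route from the paper. You propose the classical Bringmann--Ono/Garthwaite strategy: assemble $\psi_0,\psi_1,\phi_0,\phi_1$ into a vector-valued harmonic Maass form for $\rho_{L(60)}$ (this is exactly the paper's $H_{(5,2)}$ from Lemma~\ref{lem-order5}), identify it with a combination of weight-$1/2$ Maass--Poincar\'e series by matching principal parts, and read off the coefficients as $I_{1/2}$-Bessel factors times Weil-representation Kloosterman sums, which are then evaluated in Sali\'e form. The paper instead takes the deliberately indirect path flagged in its introduction: it builds weight-$0$ Atkin--Lehner-twisted Poincar\'e series $f_m$ on $\Gamma_0(60)$ (Proposition~\ref{prop:poincare_principal}), uses the Millson theta lift and Alfes' trace formula to show the mock theta coefficients are algebraic traces of CM values of $f_m$ (Theorem~\ref{thm:vec-value-coeffs}), and only then unfolds those traces into the sums $A_c(n|\vartheta)$, simplified in Section~\ref{sec:kloo-simp} via \cite{AAW}. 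What the paper's detour buys is analytic cheapness at the continuation step: for weight $0$ the harmonic special point is $s=1$, strictly inside the half-plane $\re(s)>\tfrac34$ where the coefficient sums converge absolutely by the classical Weil bound, so the conditional-convergence difficulty is isolated in a single limit exchange at the very end, handled by Corollary~\ref{cor:gs}. What your route buys is directness (no passage through algebraic formulas), and it is how the $f(q)$ and $\omega(q)$ cases were originally proved in \cite{BOF} and \cite{Garthwaite}.

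Two points in your write-up need repair. First, "the relevant space is governed by the principal parts" silently requires $S_{1/2}(\rho_{L(60)})=\{0\}$, which follows from the isomorphism with $J_{1,60}^{\mathrm{cusp}}$ \cite{EZ} and Skoruppa's theorem \cite{Skoruppa}; this must be invoked, since otherwise $\widehat F$ could differ from the Poincar\'e series by a cusp form. Second, and more seriously, your claim that "the usual unfolding produces" the Fourier expansion of the weight-$1/2$ Maass--Poincar\'e series is not valid at the point you need it: for weight $1/2$ the defining series converges only for $\re(s)>1$, while the harmonic point is $s=\tfrac34$, sitting exactly on the boundary of the region where the coefficient sums converge absolutely under the Weil bound. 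The entire expansion must therefore be obtained by analytic continuation, and justifying that continuation---including regularity at $s=\tfrac34$, which again rests on the vanishing of $S_{1/2}(\rho_{L(60)})$ since the exceptional eigenvalue $\tfrac{3}{16}$ occurs precisely when such cusp forms exist---itself requires the Kloosterman-sum cancellation of Corollary~\ref{cor:gs}. So your closing remark that "the main obstacle \dots is not the convergence" has it backwards: in your route the conditional-convergence estimate is needed inside step two, not merely as a final partial-summation check; this is exactly the delicate point in \cite{BOF} (cf.\ \cite{AD}), and it is the reason the paper prefers the weight-$0$ detour. With these two repairs your argument goes through and would give an alternative proof of Theorem~\ref{thm:fifth}.
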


\begin{remark*}
    Although it is not obvious from the definition, the sum $A_c(n|\vartheta)$ in Theorem~\ref{thm:fifth} is real-valued.
    To see this, two facts are useful.
    First, $\overline{\chi_5(2,\cdot)} = \pfrac{5}{\cdot}\chi_5(2,\cdot)$.
    Second, the values of $b$ in the sum defining $A_c(n|\vartheta)$ satisfy $b\equiv \pm 1 \pmod{5}$ for $\vartheta \in  \{\psi_0, \phi_1\}$, and $b\equiv \pm 2\pmod{5}$ for $\vartheta \in  \{\psi_1, \phi_0\}$.
    Thus, in each case $\overline{k_n} = \pfrac 5b k_n$.
\end{remark*}

The three functions of order 7 are
\[
	\mathcal{F}_0(q) = \sum_{n=0}^\infty \frac{q^{n^2}}{(q^{n+1};q)_n}, \quad  \quad \mathcal{F}_1(q) = \sum_{n=0}^\infty \frac{q^{(n+1)^2}}{(q^{n+1};q)_{n+1}},  \quad \mathcal{F}_2(q) = \sum_{n=0}^\infty \frac{q^{n(n+1)}}{(q^{n+1};q)_{n+1}}.
\]

\begin{theorem} \label{thm:seventh}
    Let $\vartheta \in \{\mathcal F_0, \mathcal F_1, \mathcal F_2\}$ and let $d_n$ and $k_n$ be given by the following table.
    \[
    \renewcommand{\arraystretch}{1.2}
        \begin{tabular}{|c|c|c|c|}
            \hline
             & $\mathcal F_0$ & $\mathcal F_1$ & $\mathcal F_2$  \\
             \hline
            $d_n$ & $168n-1$ & $168n-25$ & $168n+47$   \\
            \hline
            $k_n$ & $1$ & $-1$ & $-1$ \\ \hline
        \end{tabular}
    \]
    Then
    \[
        \alpha_\vartheta(n) = \frac{2\pi}{\sqrt[4]{d_n}} \sum_{c>0} \frac{A_c(n|\vartheta)}{c} I_{1/2}\pfrac{\pi \sqrt{d_n}}{42c},
    \]
    where
    \[
        A_c(n|\vartheta) = k_n \sqrt{\frac{c}{21}} \sum_{\substack{b\bmod 42c \\ b^2 \equiv -d_n(168c)}} \pfrac{-21}{b} \sin\pfrac{-\pi b}{42c}.
    \]
\end{theorem}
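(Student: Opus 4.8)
The plan is to treat the three seventh-order functions by the same method used for Theorems~\ref{thm:third} and~\ref{thm:fifth}: realize each completed function as a component of a vector-valued harmonic Maass form of weight $1/2$ for the Weil representation of an odd-rank lattice, apply the general Rademacher-type coefficient formula, and then evaluate the resulting Weil-representation Kloosterman sums in the closed Sali\'e form given in the statement.

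First I would fix the modular data. With $q=e^{2\pi i\tau}$, form $g_j(\tau)=q^{-\delta_j}\mathcal F_j(q)$ taking $\delta_0=\tfrac1{168}$, $\delta_1=\tfrac{25}{168}$, $\delta_2=-\tfrac{47}{168}$, so that the exponents of $g_j$ are exactly $\tfrac1{168}d_n$; note that $-d_n\equiv 1,25,121\pmod{168}$ are the squares of $1,5,11$, the residues that support the weight $3/2$ unary theta shadow of the seventh-order functions. Adjoining the nonholomorphic Eichler integral of that shadow produces a harmonic Maass form whose natural home is a vector-valued object for the Weil representation $\rho_L$ of a rank-one lattice with $L'/L$ of exponent dividing $168$. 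I would read off its principal part from the exponential singularities of the $\mathcal F_j$ at the cusps, and check that no weight $1/2$ holomorphic cusp form at this level can intervene, so that the form is pinned down by its principal part.

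Next I would invoke the general coefficient formula for a weight $1/2$ harmonic Maass form for $\rho_L$: with the above principal part it yields a conditionally convergent expansion $\alpha_{\mathcal F_j}(n)=\frac{2\pi}{\sqrt[4]{d_n}}\sum_{c>0}\frac{K_c(n)}{c}\,I_{1/2}\lrp{\frac{\pi\sqrt{d_n}}{42c}}$, in which the cusp width $42$ and the modulus $168=4\cdot42$ are forced by the level. The crux is then to identify the abstract Kloosterman sum $K_c(n)$, a sum of quadratic Gauss sums built from the entries of $\rho_L\ptmatrix{*}{*}{c}{*}$, with $k_n\sqrt{c/21}\sum_{b^2\equiv -d_n(168c)}\lrp{\tfrac{-21}{b}}\sin\lrp{\tfrac{-\pi b}{42c}}$. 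As in \eqref{eq:whiteman} and \eqref{eq:third-order-theorem-kloo-sum}, I would diagonalize the Gauss sums by a Selberg/Whiteman-type identity, converting $K_c(n)$ into a sum over the square roots $b$ of $-d_n$ modulo $168c$, with the Kronecker symbol $\lrp{\tfrac{-21}{b}}=\lrp{\tfrac{-3}{b}}\lrp{\tfrac{7}{b}}$ and the sign $k_n$ recording the multiplier of $L$.

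The main obstacle will be this explicit evaluation, and in particular the $7$-part: whereas the third-order computation leaves only $\lrp{\tfrac{-3}{b}}$, here one must correctly carry the quadratic Gauss sum at the prime $7$ that produces $\lrp{\tfrac7b}$, pin down the signs $k_n\in\{1,-1,-1\}$ by comparing $\rho_L$ on the generators of $\SLtZ$ through quadratic reciprocity, and keep the eighth-root-of-unity and square-root branches consistent throughout. Convergence of the series is not a separate difficulty: it follows from the general bound on sums of Kloosterman sums for the Weil representation of an odd-rank lattice, which furnishes the cancellation needed to beat the $I_{1/2}(x)\sim e^x$ growth of the Bessel factor.
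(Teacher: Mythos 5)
Your proposal is correct in outline, but it takes a genuinely different route from the paper. You propose the direct Bringmann--Ono/Garthwaite strategy: build a weight $1/2$ vector-valued Maass--Poincar\'e series for $\rho_{L(42)}$ with the prescribed principal part, identify it with the completed vector $H_{(7)}$ using the vanishing of $S_{1/2}(\rho_{L(42)})$, and read the coefficients off the Poincar\'e series. The paper deliberately avoids this (see the footnote in its introduction): it instead forms weight-$0$ scalar Poincar\'e series $f_m$ on $\Gamma_0(42)$ with Atkin--Lehner character $\ep_2\ep_3\ep_7$, applies the Millson theta lift to express the coefficients of $H_{(7)}$ as algebraic traces of CM values (Theorems~\ref{thm:Alfes-Trace} and~\ref{thm:vec-value-coeffs}), and then unfolds those traces over quadratic forms into the sums $A_c^{(1)}(n|\vartheta)$, which are matched to Weil-representation Kloosterman sums by citing Theorem~2.1 of \cite{AAW} rather than by a Whiteman-type Gauss-sum evaluation; the symbol $\pfrac{-21}{b}$ and the signs $k_n$ then come from the finite character computation $c_n\xi(b)$ in Section~\ref{sec:kloo-simp}. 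Your route skips the theta lift and the algebraic formulas entirely; the paper's route obtains the algebraic formulas as a byproduct and trades the delicate weight-$1/2$ analysis for the easy weight-$0$ continuation of $F_m(\tau,s)$, at the price of indirection.

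The caveat concerns your middle step. There is no off-the-shelf ``general Rademacher-type coefficient formula'' to invoke in weight $1/2$: the Maass--Poincar\'e series converges only for $\re(s)>1$, while the harmonic point is $s=\tfrac34$, so you must prove the analytic continuation and then show that the holomorphic-part coefficients are given by the conditionally convergent Kloosterman--Bessel series. That is exactly where \cite{BOF} and Garthwaite spent their effort for $f$ and $\omega$, and it consumes the same cancellation estimate (Corollary~\ref{cor:gs}) that you defer to a closing remark; in either approach that bound is the engine of the argument (in the paper it justifies exchanging the $s\to1^+$ limit with the sum over $c$), not a final convergence check. Relatedly, convergence is not about beating exponential growth of $I_{1/2}$: as $c\to\infty$ the Bessel argument tends to $0$, where $I_{1/2}(x)\asymp x^{1/2}$, so the terms are $\asymp A_c(n|\vartheta)\,c^{-3/2}$ and the Weil bound $A_c(n|\vartheta)\ll c^{1/2+\ep}$ fails only barely; what is needed is the extra cancellation $\sum_{c\leq x}A_c(n|\vartheta)/c\ll x^{1/2-\delta}$. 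With that step carried out honestly, your plan---whose data $r=1,5,11$, $d_n$, and $k_n$ are all correctly identified---does yield the theorem.
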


As in the case of $\alpha_f(q)$, the convergence of each of the formulas above is difficult to establish since the sign changes of $A_c(n|\vartheta)$ must be taken into account.
We will show that for each of the mock theta functions, the corresponding Kloosterman sum $A_c(n|\vartheta)$ satisfies
\begin{equation} \label{eq:intro-sum-kloo-sum}
    \sum_{c\leq x} \frac{A_c(n|\vartheta)}{c} \ll x^{1/2-\delta}
\end{equation}
for some $\delta>0$ depending on $\vartheta$.
To obtain the estimate \eqref{eq:intro-sum-kloo-sum}, we apply the Goldfeld--Sarnak method to a general class of Kloosterman sums.
Recent work of the authors and Woodall~\cite{AAW} shows that the sums $A_c(n|\vartheta)$ are linear combinations of Kloosterman sums for the Weil representation attached to a lattice of odd rank (see Section~\ref{sec:weil-rep} for definitions).
Corollary~\ref{cor:gs} below gives a bound of the form \eqref{eq:intro-sum-kloo-sum} for such Kloosterman sums.

In Theorem~\ref{thm:table-formulas} we give a single formula for the coefficients of all seventeen of Ramanujan's original mock theta functions in terms of the following data given in Section~\ref{sec:mock-theta-traces}:
\begin{enumerate}
    \item the level $N \in \Z_+$ that defines the lattice and bilinear form for the Weil representation,
    \item the character $\ep$ of the group $U_N$ of unitary divisors of $N$,
    \item the order $m \in \Z_+$ of the pole of the associated Poincar\'e series on $\Gamma_0(N)$, 
    \item a linear expression $d_n$ of the form $An+B$,
    \item the index $r\in \Z/2N\Z$ that identifies the component of the vector-valued harmonic Maass form where $\vartheta$ lives, and
    \item a constant $c_n\in \C$, possibly depending on the residue class of $n$ modulo $2$, $3$, or $4$.
\end{enumerate}
Each mock theta function appearing in this paper is associated with one of  five $(N,\ep)$ pairs, specifically those in \eqref{eq:N-ep-pairs} below.
Other $(N,\ep)$ pairs give rise to the mock theta functions appearing in Ramanujan's lost notebook and elsewhere (for an excellent survey of mock theta functions, see \cite{Gordon-McIntosh}).
For example, the sixth order function\footnote{A formula for the coefficients of $\gamma$ was proved in \cite{sun}.} $\gamma$ corresponds to $(18,\ep_2)$ in the notation of Section~\ref{sec:alg-traces-poincare-series}.
Formulas for many of these other mock theta functions appear in the second author's Master's thesis \cite{anderson-thesis}.
Additionally, there are several $(N,\ep)$ pairs for which the methods of this paper yield formulas for the coefficients of vector-valued harmonic Maass forms, for example
\[
    (10, \ep_2), \qquad (22, \ep_2), \qquad (46, \ep_2), \qquad (70, \ep_2\ep_5\ep_7).
\]
Computationally, the formula in Theorem~\ref{thm:table-formulas} appears to output integers for these pairs, for some $m,d_n,r,c_n$ inputs, but we have not been able to identify known mock theta functions whose coefficients match these outputs.
It would be interesting to investigate this further.

We conclude this introduction with a brief outline of the paper.
In Section~\ref{sec:alg-traces-poincare-series} we give some background information on the Weil representation and show how each of the mock theta functions can be completed to become components of vector-valued harmonic Maass forms with the Weil representation.
This has been done already for the fifth and seventh order functions, and for some of the third order functions. 
In Appendix~\ref{appendix} we give details for the remaining third order functions.
In Section~\ref{sec:transcendental-formulas} we apply work of Alfes, Bruinier, and Schwagenscheidt on the Millson theta lift to write the coefficients of the mock theta functions as algebraic traces of Poincar\'e series.
At that point, if we ignore issues of convergence then a straightforward argument, detailed in Section~\ref{sec:transcendental-formulas}, yields the transcendental formulas\footnote{This path to the transcendental formulas is somewhat indirect, since it passes through the algebraic formulas along the way. We have chosen to present the proof in this way since it highlights the close connection between the algebraic and transcendental formulas.}.
Finally, the crucial Kloosterman sum estimate that addresses the subtle convergence issues is proved in Section~\ref{sec:goldfeld-sarnak}.

\section{Ramanujan's mock theta functions and the Weil representation}

In the early 2000s, Zwegers \cite{zwegers-f,zwegers-thesis} proved that the mock theta functions can be completed to harmonic Maass forms, and he gave explicit constructions for the third order functions $f(q)$ and $\omega(q)$, and for all of the fifth and seventh order functions.
In \cite{Andersen-Order-5,andersen-7} the first author reinterpreted Zwegers' results for the fifth and seventh order functions in the context of vector-valued harmonic Maass forms with the Weil representation. 
In this section, we recall the relevant definitions and write each of Ramanujan's mock theta functions as components of such vector-valued forms.

\subsection{The Weil representation} \label{sec:weil-rep}

Good references for the background material in this section are \cite[Chapter~1]{bruinier} and \cite[Chapter~14]{cohen-stromberg}.
Let $L$ be an even lattice with nondegenerate symmetric bilinear form $\langle \cdot, \cdot \rangle$, and let $q(x) = \frac 12 \vprod {x,x}$ denote the associated $\Z$-valued quadratic form.
Let $L'$ denote the dual lattice
\begin{equation} \label{eq:L'-def}
    L' = \left\{ x\in L\otimes \Q : \vprod {x,y} \in \Z \text{ for all }y\in L \right\};
\end{equation}
then the quotient $L'/L$ is a finite abelian group.
We denote the standard basis of $\C[L'/L]$ by $\{\mathfrak e_\alpha : \alpha \in L'/L\}$.
By identifying $L$ with $\Z^g$ we may write $\vprod{x,y} = x^TMy$ for all $x,y\in \Z^g$, for some symmetric integer matrix $M$ with even diagonal.
Let $\Delta = \det M$; then $|L'/L|=|\Delta|$.

Let $\Mp_2(\R)$ be the metaplectic group, the elements of which are of the form $(\gamma,\phi)$, where $\gamma=\ptmatrix abcd\in \SL_2(\R)$ and $\phi:\mathbb H\to\C$ is a holomorphic function with $\phi^2(\tau)=c\tau+d$.
The group law on $\Mp_2(\R)$ is given by
\begin{equation}
	(\gamma_1,\phi_1(\tau))(\gamma_2,\phi_2(\tau)) = (\gamma_1\gamma_2, \phi_1(\gamma_2\tau)\phi_2(\tau)).
\end{equation}
Let $\Mp_2(\Z)$ denote the inverse image of $\SL_2(\Z)$ under the covering map $(\gamma,\phi)\mapsto \gamma$.
Then $\Mp_2(\Z)$ is generated by the elements
\begin{equation}
	T=\left(\ptmatrix 1101, 1\right) \quad \text{ and } \quad S = \left(\ptmatrix 0{-1}10,\sqrt\tau\right)
\end{equation}
and the center of $\Mp_2(\Z)$ is generated by
\begin{equation}
	Z = S^2 = (ST)^3 = \left(\ptmatrix {-1}00{-1},i\right).
\end{equation}
The Weil representation associated with the lattice $L$ is the unitary representation
\[
    \rho_L : \Mp_2(\Z) \to \C[L'/L]
\]
given by
\begin{align}
    \rho_L(T) \e_\alpha &= e(q(\alpha))\e_\alpha, \label{eq:T-transform} \\
    \rho_L(S) \e_\alpha &= \frac{i^{(b^--b^+)/2}}{\sqrt{|L'/L|}} \sum_{\beta \in L'/L} e(-\vprod{\alpha,\beta})\e_\beta. \label{eq:S-transform}
\end{align}
Here $(b^+,b^-)$ is the signature of $L$.
For $\mathfrak g\in \Mp_2(\Z)$ we define the coefficient $\rho_{\alpha\beta}(\mathfrak g)$ of the representation $\rho_L$ by
\begin{equation}
    \rho_L(\mathfrak g) \mathfrak e_\beta = \sum_{\alpha\in L'/L} \rho_{\alpha\beta}(\mathfrak g) \mathfrak e_\alpha.
\end{equation}
Shintani \cite{shintani} gave the following formula for the coefficients $\rho_{\alpha\beta}(\mathfrak g)$: if $\mathfrak g = (\ptmatrix abcd, \sqrt{cz+d})$ and $c>0$, then
\begin{equation} \label{eq:rho-alpha-beta-formula}
    \rho_{\alpha\beta}(\mathfrak g) = \frac{i^{\frac{b^--b^+}{2}}}{c^{(b^++b^-)/2}\sqrt{|L'/L|}} \sum_{r\in L/cL} e\pfrac{aq(\alpha+r)-\vprod{\beta,\alpha+r}+dq(\beta)}{c}.
\end{equation}

A harmonic Maass form of weight $k$ and type $\rho_L$ is a real analytic function $f:\H\to \C[L'/L]$ that (1) satisfies the transformation law
\begin{equation} \label{eq:weil-rep-transformation-law}
    f(\gamma \tau) = \phi^{2k}(\tau) \rho_L(\mathfrak g) f(\tau) \quad \text{ for all } \mathfrak g = (\gamma,\phi)\in \Mp_2(\Z),
\end{equation}
(2) is annihilated by the weight $k$ hyperbolic Laplacian
\[
    \Delta_k = -y^2 \left(\frac{\partial^2}{\partial x^2} + \frac{\partial^2}{\partial y^2}\right) + i k y \left( \frac{\partial}{\partial x} + i \frac{\partial}{\partial y} \right),
\]
and (3) can be written in the form $f(\tau) = P(\tau) + g(\tau)$, where $g$ decays exponentially at infinity and
\[
    P(\tau) = \sum_{\alpha\in L'/L} \left(c_{\alpha,0} + \sum_{n=1}^N c_{\alpha,n} e(-m_{\alpha,n}\tau)\right)\mathfrak e_\alpha
\]
for some $m_{\alpha,n}\in \Q_+$.
We call $P(\tau)$ the principal part of $f$.
Let $\mathcal H_{k}(\rho_L)$ denote the vector space of harmonic Maass forms of weight $k$ and type $\rho_L$, and let $M_k^!(\rho_L)$, $M_k(\rho_L)$, and $S_k(\rho_L)$ denote the subspaces of weakly holomorphic, holomorphic, and cuspidal modular forms, respectively.
Setting $\mathfrak g=Z$ in \eqref{eq:weil-rep-transformation-law}, we find that every $f\in \mathcal H_k(\rho_L)$ satisfies $f = (-1)^{2k+b^--b^+} f$.
Thus $f=0$ unless $k$ satisfies the consistency condition
\begin{equation} \label{eq:sigma-consistency}
    \sigma := k + \tfrac 12(b^--b^+) \in \Z.
\end{equation}
In particular, we see that $k\in \frac 12\Z$.

Each harmonic Maass form $f$ can be written as $f^++f^-$, where $f^+$ is holomorphic on $\H$ with a possible pole at infinity and $f^-$ is nonholomorphic but satisfies $\xi_k f^- \in M_{2-k}(\bar \rho_L)$, where $\xi_k=2iy^k \bar\partial_{\bar\tau}$.
The function $f^+$ is called the holomorphic part of $f$.

\subsection{Mock theta functions} \label{sec:mock-theta-vectors}

Here we describe how the completed mock theta functions appear as components of vector-valued harmonic Maass forms with the Weil representation.
In each case the lattice is of the form $L=L(N)=\Z$ with bilinear form $\langle x,y \rangle = -2Nxy$ for some $N\in \Z_+$.
It follows that $\Delta=-2N$ and $L'/L\cong \Z/2N\Z$.
For brevity, when working with $L(N)$ we will write $\mathfrak e_h$ instead of $\mathfrak e_{h/2N}$ when $\frac{h}{2N}\in L'/L$.

Each of the completion terms for Ramanujan's mock theta functions can be written in terms of the unary theta function
\[\theta_{N,a}(\tau)=\sum_{\substack{n\in \Z\\n\equiv a(2N)}}nq^{n^2/4N}\]
and the associated nonholomorphic Eichler integral\footnote{For comparison, this is related to the notation used by Zwegers in Definition 4.1 of \cite{zwegers-thesis} by 
$R_{k/N,1/2}\lrp{\frac{N}{2}\tau}= -e\pfrac{-k}{2N}(\V_{N,k}(\tau)+\V_{N,N-k}(\tau))$.}
\[
    {\V_{N,a}}(\tau) := \frac{i}{\sqrt{2N}} \int_{-\overline{\tau}}^{i\infty} \frac{\theta_{N,a}(z)}{\sqrt{-i(\tau+z)}}dz.
\]
Here $q=e(\tau)=e^{2\pi i \tau}$.

In \cite{zwegers-f}, Zwegers gave the transformation laws for the completed versions of the third order mock theta functions $f(q)$ and $\omega(q)$.
If we define
\begin{align}
    \tilde f(\tau) &= q^{-\frac{1}{24}} f(q) +(2\V_{6,1}-2\V_{6,5})(\tau)\\
    \tilde \omega(\tau) &= 2q^{\frac 23} \omega(q) -(2\V_{6,2}-2\V_{6,4})(2\tau)
\end{align}
then the vector $g(\tau) = (\tilde f(\tau), \tilde \omega(\frac \tau2), \zeta_3^{-1}\tilde \omega(\frac{\tau+1}2))^T$ satisfies the transformations
\begin{equation} \label{eq:third-order-h-T-S}
    g(\tau+1) = 
    \left(
    \begin{array}{ccc}
        \zeta_{24}^{-1} & 0 & 0  \\
        0 & 0 & \zeta_3 \\
        0 & \zeta_3 & 0
    \end{array}
    \right) g(\tau)
    \quad \text{ and } \quad
    g(-1/\tau) = \sqrt{-i\tau}
    \left(
    \begin{array}{ccc}
        0 & 1 & 0  \\
        1 & 0 & 0 \\
        0 & 0 & -1
    \end{array}
    \right) g(\tau),
\end{equation}
where $\zeta_m = e(\frac 1m)$.
Then a calculation involving \eqref{eq:T-transform}, \eqref{eq:S-transform}, and \eqref{eq:third-order-h-T-S} yields the following lemma.

\begin{lemma}
Let
\begin{multline}
    H_{(3,1)}(\tau) = \sum_{h\in \{1,5\}} (\tfrac{-12}{h}) \tilde f(\tau) \mathfrak (\e_h-\e_{-h}) \\ + \big( \zeta_3^{-1}\tilde \omega(\tfrac{\tau+1}2) - \tilde \omega(\tfrac{\tau}2)\big)(\mathfrak e_2 - \mathfrak e_{10}) - \big( \zeta_3^{-1}\tilde \omega(\tfrac{\tau+1}2) + \tilde \omega(\tfrac{\tau}2)\big)(\mathfrak e_4 - \mathfrak e_{8}).
\end{multline}
Then $H_{(3,1)}$ has principal part $q^{-1/24}(\mathfrak e_1 - \mathfrak e_5 + \mathfrak e_7 - \mathfrak e_{11})$ and is an element of $\mathcal H_{1/2}(\rho_{L(6)})$.
\end{lemma}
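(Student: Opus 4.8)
The plan is to verify directly the three conditions defining membership in $\mathcal H_{1/2}(\rho_{L(6)})$: the transformation law \eqref{eq:weil-rep-transformation-law}, annihilation by the Laplacian $\Delta_{1/2}$, and the asserted principal part. Because $\Mp_2(\Z) = \langle S, T \rangle$, it is enough to check \eqref{eq:weil-rep-transformation-law} on the two generators, and I would do this componentwise in $\C[L'/L]$, feeding the transformations \eqref{eq:third-order-h-T-S} of the vector $g$ into the explicit formulas \eqref{eq:T-transform} and \eqref{eq:S-transform} for $\rho_{L(6)}$.

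First I would assemble the lattice data for $L = L(6)$. Here $L'/L \cong \Z/12\Z$, the quadratic form is $q(h/12) = -h^2/24$, and the signature is $(b^+,b^-) = (0,1)$, so $\sigma = \tfrac12 + \tfrac12 = 1$ satisfies \eqref{eq:sigma-consistency}. Then \eqref{eq:T-transform} gives $\rho_{L(6)}(T)\e_h = e(-h^2/24)\e_h$ and \eqref{eq:S-transform} gives $\rho_{L(6)}(S)\e_b = \tfrac{i^{1/2}}{\sqrt{12}}\sum_{a \bmod 12} e(ab/12)\,\e_a$. Expanding the sum over $h \in \{1,5\}$ and reducing $-h$ modulo $12$ shows that the nonzero components of $H_{(3,1)}$ are the $\tilde f$ terms at $\e_1, -\e_5, \e_7, -\e_{11}$ together with the two $\tilde\omega$-combinations supported on $\e_2, \e_4, \e_8, \e_{10}$, while $\e_0, \e_3, \e_6, \e_9$ vanish.

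The $T$-relation is a routine finite check. Reading off \eqref{eq:third-order-h-T-S} yields $\tilde f(\tau+1) = \zeta_{24}^{-1}\tilde f(\tau)$ and, under $\tau \mapsto \tau+1$, the substitutions $\tilde\omega(\tfrac\tau2) \mapsto \tilde\omega(\tfrac{\tau+1}2)$ and $\tilde\omega(\tfrac{\tau+1}2) \mapsto \zeta_3^2\,\tilde\omega(\tfrac\tau2)$; inserting these into each component of $H_{(3,1)}(\tau+1)$ and comparing with the diagonal action of $\rho_{L(6)}(T)$ settles all eight components. The $S$-relation is the heart of the matter, and I expect it to be the main obstacle. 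From \eqref{eq:third-order-h-T-S} I would first extract the rules $\tilde f(-1/\tau) = \sqrt{-i\tau}\,\tilde\omega(\tfrac\tau2)$, $\tilde\omega(-1/(2\tau)) = \sqrt{-i\tau}\,\tilde f(\tau)$, and $\tilde\omega(\tfrac{\tau-1}{2\tau}) = -\sqrt{-i\tau}\,\tilde\omega(\tfrac{\tau+1}2)$, and then evaluate every component of $H_{(3,1)}(-1/\tau)$ through them. Matching the outcome against $\sqrt\tau\,\rho_{L(6)}(S)H_{(3,1)}(\tau)$, and writing $H_{(3,1)}(-1/\tau) = \sqrt{-i\tau}\sum_a W_a \e_a$, reduces the $S$-relation to the finite system of identities $W_a = \tfrac{i}{\sqrt{12}}\sum_{b \bmod 12} H_b\, e(ab/12)$, where $H_b$ is the $b$-th component of $H_{(3,1)}(\tau)$. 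Each of these is an elementary root-of-unity computation in which the $\tilde f$- and $\tilde\omega$-contributions collapse through sums such as $\zeta_{12} - \zeta_{12}^5 + \zeta_{12}^7 - \zeta_{12}^{11} = 0$ and $\zeta_6 - \zeta_6^{-1} = \zeta_3 - \zeta_3^{-1} = \sqrt3\,i$. The delicate point throughout is the bookkeeping of eighth roots of unity: the factor $i^{1/2}$ in \eqref{eq:S-transform} must be reconciled against $\sqrt{-i\tau} = e(-\tfrac18)\sqrt\tau$ coming from \eqref{eq:third-order-h-T-S}, and it is exactly this interplay that forces the normalizations to agree.

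Finally, harmonicity and the principal part are quick. Each component of $H_{(3,1)}$ is a linear combination of $\tilde f$ and of dilated or translated copies of $\tilde\omega$; since $\tilde f$ and $\tilde\omega$ are annihilated by $\Delta_{1/2}$ (they are components of Zwegers' completed harmonic forms) and $\Delta_{1/2}$ satisfies $\Delta_{1/2}[F(\tfrac\tau2)] = (\Delta_{1/2}F)(\tfrac\tau2)$, and likewise for integer translations, every component lies in the kernel of $\Delta_{1/2}$. For the principal part I would use $f(q) = 1 + O(q)$ and $\omega(q) = 1 + O(q)$: the holomorphic part of $\tilde f$ begins with the exponentially growing term $q^{-1/24}$, whereas the holomorphic parts of $\tilde\omega(\tfrac\tau2)$ and $\tilde\omega(\tfrac{\tau+1}2)$ begin at $q^{1/3}$ and the associated Eichler integrals decay (the theta functions $\theta_{6,a}$ with $a \ne 0$ are cusp forms). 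Hence only $\e_1, \e_5, \e_7, \e_{11}$ carry principal terms, giving $q^{-1/24}(\e_1 - \e_5 + \e_7 - \e_{11})$ as claimed.
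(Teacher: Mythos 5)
Your proposal is correct and follows essentially the same route as the paper, which justifies this lemma precisely by ``a calculation involving \eqref{eq:T-transform}, \eqref{eq:S-transform}, and \eqref{eq:third-order-h-T-S}''; your componentwise verification on the generators $T$ and $S$, with the correct reconciliation $e(-\tfrac18)\cdot e(\tfrac18)^{-1}$-style bookkeeping yielding $W_a = \tfrac{i}{\sqrt{12}}\sum_b H_b\,e(ab/12)$, is exactly that calculation made explicit. The treatment of harmonicity (invariance of $\Delta_{1/2}$ under real scaling and translation) and of the principal part (only $\tilde f$ contributes growth, since $\tilde\omega(\tfrac\tau2)$ starts at $q^{1/3}$ and the Eichler integrals decay) is also sound.
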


\begin{remark*}
    Note that the even-indexed coefficients of $\omega(q)$ appear in two of the components of $H_{(3,1)}$, while the odd-indexed coefficients appear in two other components.
\end{remark*}

Transformation laws for the third order mock theta functions $\phi(q)$ and $\psi(q)$ can be found in \cite{Gordon-McIntosh}; they involve an additional mock theta function $\nu(q)$ whose definition and completion $\tilde\nu$ are in Appendix~\ref{appendix}.
Let 
\begin{align*}
    \tilde{\psi}(\tau)&= q^{-\frac1{24}}\psi(q) -\frac12 \sum_{h\in\lrc{2,10,14,22}}\V_{24,h}(\tau),\\
    \tilde{\phi}(\tau) &= q^{-\frac1{24}}\phi(q) + \sum_{h\in\lrc{1,5,7,11}}\V_{24,h}(4\tau)-\sum_{h\in\lrc{13,17,19,23}}\V_{24,h}(4\tau).
\end{align*}
We need to split the coefficients of $\phi(q)$ by their index modulo $4$; to that end, we define
\[
    \tilde{\phi}^{(k)}(\tau) = \frac14\sum_{\ell=0}^3 i^{-k\ell}\zeta_{96}^{\ell} \tilde{\phi}\pfrac{\tau+\ell}{4}.
\]
The following lemma is proved in Appendix~\ref{appendix}.

\begin{lemma} \label{lem:order-3-2}
Let
\begin{multline*}
    {H}_{(3,2)} = \sum_{h\in\{1,31\}} \tilde{\phi}^{(0)} (\e_h - \e_{-h}) -  \sum_{h\in \{13,19\}} \tilde{\phi}^{(1)} (\e_h - \e_{-h}) 
    + \sum_{h\in \{7,25\}} \tilde{\phi}^{(2)}(\e_h - \e_{-h}) \\
    + \sum_{h\in \{5,11\}} \tilde{\phi}^{(3)}(\e_h - \e_{-h})
    - \sum_{h\in \{8,16\}} \tilde{\nu}(\e_h - \e_{-h}) 
    - \sum_{h\in \{2, 10, 14, 22\}} \tilde{\psi} (\e_h - \e_{-h}).
\end{multline*}
Then ${H}_{(3,2)}$ has principal part $q^{-1/96}(\e_1-\e_{17}+\e_{31}-\e_{47})$ and is an element of $\mathcal{H}_{1/2}(\rho_{L(24)}).$ 
\end{lemma}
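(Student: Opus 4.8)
The plan is to verify directly that $H_{(3,2)}$ satisfies the three defining properties of an element of $\mathcal H_{1/2}(\rho_{L(24)})$, with the transformation law \eqref{eq:weil-rep-transformation-law} as the main content; this is the same template used for the $f,\omega$ lemma, now carried out for $N=24$. Since $\Mp_2(\Z)$ is generated by $T$ and $S$, it suffices to check \eqref{eq:weil-rep-transformation-law} for these two generators using the explicit formulas \eqref{eq:T-transform} and \eqref{eq:S-transform}. For $L(24)$ we have $\langle x,y\rangle=-48xy$, so $q(h/48)=-h^2/96$ and the signature is $(b^+,b^-)=(0,1)$; in particular the consistency condition \eqref{eq:sigma-consistency} holds with $\sigma=1$, and $\rho_{L(24)}(T)\e_h=e(-h^2/96)\e_h$, while $\rho_{L(24)}(S)\e_h=\tfrac{e(1/8)}{\sqrt{48}}\sum_{h'\bmod 48}e(hh'/48)\,\e_{h'}$. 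The building blocks $\tilde\phi,\tilde\psi,\tilde\nu$ are each (components of) weight $1/2$ harmonic Maass forms, so harmonicity of $H_{(3,2)}$ is immediate: $\Delta_{1/2}$ acts componentwise and the substitutions $\tau\mapsto(\tau+\ell)/4$ preserve weight-$1/2$ harmonicity, and the exponential decay of the nonholomorphic parts is inherited from the Eichler integrals $\V_{24,a}$. Thus the real work is the $T$- and $S$-equivariance together with the bookkeeping of the principal part.

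For the $T$-transformation I would first record the scalar laws $\tilde\phi(\tau+1)=\zeta_{24}^{-1}\tilde\phi(\tau)$, $\tilde\psi(\tau+1)=\zeta_{24}^{-1}\tilde\psi(\tau)$, and the corresponding law for $\tilde\nu$ with eigenvalue $\zeta_3$, which follow from the $q$-expansions and the shift behavior of $\theta_{24,a}$. The key point is that the projections $\tilde\phi^{(k)}$ are designed precisely so that $\tau\mapsto\tau+1$ cyclically permutes the four arguments $(\tau+\ell)/4$ and the twisting roots of unity convert this into scalar multiplication: a short reindexing gives $\tilde\phi^{(k)}(\tau+1)=i^{k}\zeta_{96}^{-1}\tilde\phi^{(k)}(\tau)=\zeta_{96}^{24k-1}\tilde\phi^{(k)}(\tau)$. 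Comparing with $e(-h^2/96)$, one checks the congruence $24k-1\equiv -h^2\pmod{96}$ for exactly the pairs attached to $\tilde\phi^{(k)}$ in the definition of $H_{(3,2)}$ (e.g. $h\in\{1,31\}$ for $k=0$, $h\in\{13,19\}$ for $k=1$, and so on), while $h\in\{2,10,14,22\}$ forces the $\zeta_{24}^{-1}$ needed for $\tilde\psi$ and $h\in\{8,16\}$ forces the $\zeta_3$ needed for $\tilde\nu$. Since $\e_{-h}=\e_{48-h}$ carries the same phase $e(-h^2/96)$, each antisymmetric pair $\e_h-\e_{-h}$ is an eigenvector with the correct eigenvalue, and $T$-equivariance follows.

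The $S$-transformation is the main obstacle. Here I would invoke the scalar transformation laws for $\tilde\phi,\tilde\psi,\tilde\nu$ under $\tau\mapsto-1/\tau$ from \cite{Gordon-McIntosh} (reworked in the appendix), which—as in \eqref{eq:third-order-h-T-S} for the $f,\omega$ case—express $\tilde\phi(-1/\tau)$, $\tilde\psi(-1/\tau)$, $\tilde\nu(-1/\tau)$, and the associated $\tilde\phi^{(k)}(-1/\tau)$, as $\sqrt{-i\tau}$ times explicit linear combinations of these same functions. The crux is to show that this mixing is exactly the finite Fourier transform $\e_h\mapsto\frac{1}{\sqrt{48}}\sum_{h'}e(hh'/48)\,\e_{h'}$ underlying $\rho_{L(24)}(S)$. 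The essential input is the $S$-transformation of the unary theta functions $\theta_{24,a}$, which is itself a finite Fourier transform over residues modulo $48$; this propagates through the Eichler integrals $\V_{24,a}$ and, together with the transformation of the holomorphic parts, assembles the scalar laws into \eqref{eq:weil-rep-transformation-law} for $\mathfrak g=S$. The branch factor $\sqrt{-i\tau}$ in the scalar laws and the phase $e(1/8)=i^{(b^--b^+)/2}$ built into $\rho_{L(24)}(S)$ combine to reproduce the metaplectic automorphy factor, and the bulk of the work is checking that the two coefficient matrices agree entry-by-entry across all $48$ components.

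Finally, for the principal part I would expand each $\tilde\phi^{(k)}$ and isolate its polar terms. Writing $\phi(q)=\sum_j c_j q^j$, the projection collapses to $\tilde\phi^{(k)}(\tau)=\sum_{j\equiv k\,(4)}c_j\,e\big((24j-1)\tau/96\big)+(\text{decaying})$, so the only pole comes from the constant term $c_0=1$ and lives in $\tilde\phi^{(0)}$, contributing $q^{-1/96}$; the functions $\tilde\phi^{(k)}$ with $k\neq0$, together with $\tilde\psi$ and $\tilde\nu$, have strictly positive leading exponents and contribute nothing polar. Tracking this through the definition of $H_{(3,2)}$—using $\e_{-1}=\e_{47}$ and $\e_{-31}=\e_{17}$—yields the principal part $q^{-1/96}(\e_1-\e_{17}+\e_{31}-\e_{47})$, which completes the proof.
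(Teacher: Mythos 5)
Your overall strategy is the same as the paper's. The paper assembles the six completed functions (the completions of $\sqrt 8\,q^{-1/24}\psi(\tau)$, $2q^{1/3}\nu(\tau+\tfrac12)$, and $q^{-1/96}\phi(\tfrac{\tau+\ell}{4})$, $\ell=0,1,2,3$) into a vector $\hat H_{(3,2)}$, proves it transforms under $T$ and $S$ by explicit matrices $M_T$, $M_S$, and then verifies the 48-component statement of the lemma (both the $\rho_{L(24)}$-equivariance and the principal part) as a finite computer-algebra check. Your $T$-analysis --- the eigenvalue computation $\tilde\phi^{(k)}(\tau+1)=i^{k}\zeta_{96}^{-1}\tilde\phi^{(k)}(\tau)$ and the matching congruence $24k-1\equiv -h^2\pmod{96}$ for the listed $h$ --- and your principal-part analysis (only $\tilde\phi^{(0)}$ is polar, via the constant term of $\phi$) are correct and agree with what that verification produces.

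The gap is in the $S$-step. You propose to ``invoke'' from \cite{Gordon-McIntosh} scalar laws expressing $\tilde\psi(-1/\tau)$, $\tilde\nu(-1/\tau)$, and all four $\tilde\phi^{(k)}(-1/\tau)$ as $\sqrt{-i\tau}$ times combinations of the same functions. For $\psi(\tau)$, $\nu(\tau+\tfrac12)$, $\phi(\tfrac\tau4)$, and $\phi(\tfrac{\tau+2}4)$ this is essentially available (after converting the Mordell-integral error terms into the period integrals $V_{24,h}$ via the identities from \cite{klein-thesis}). But Gordon--McIntosh contains no transformation for $\phi(\tfrac{\tau+1}{4})$ and $\phi(\tfrac{\tau+3}{4})$ under $\tau\mapsto-1/\tau$, and these are precisely the two components that $S$ exchanges with each other with the nontrivial phases $\zeta_{16}^{\pm3}$; without them the $\tilde\phi^{(k)}$ laws cannot be assembled. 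The paper manufactures them by factoring
\[
\ptmatrix{1}{1}{0}{4}S\ptmatrix{1}{-1}{0}{4}^{-1}=\ptmatrix{1}{1}{0}{2}S\ptmatrix{1}{-1}{0}{2}^{-1}\ptmatrix{1}{0}{0}{2}S\ptmatrix{1}{1}{0}{2}^{-1}\ptmatrix{1}{0}{0}{2}S\ptmatrix{1}{0}{0}{2}^{-1}
\]
and composing three of the known transformations, a step your plan does not anticipate. Relatedly, your claim that the essential input is the $S$-transformation of the unary theta functions $\theta_{24,a}$ misplaces the difficulty: that transformation only controls the nonholomorphic completion terms $\V_{24,h}$, whereas the holomorphic (mock) parts require the mock theta transformation identities themselves, including the two missing ones above. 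Once those two laws are supplied, your entry-by-entry comparison of the mixing with $\rho_{L(24)}(S)$ is exactly the finite check the paper performs.
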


The situation is similar with the final third order function $\chi(q)$.
Let 
\begin{align*}
    \tilde{\chi}(\tau) &= q^{-\frac1{24}}\chi(q) +\frac12\sum_{\substack{0<h<18\\\gcd(h,18)=1}}\pfrac{-3}{h}\V_{18,h}(3\tau),
\end{align*}
and, to split the coefficients by their index modulo 3, define
\[
    \tilde{\chi}^{(k)}(\tau) = \frac13\sum_{\ell=0}^2 \zeta_3^{-k\ell}\zeta_{72}^{\ell} \tilde{\chi}\pfrac{\tau+\ell}{3}.
\]
The transformation law for $\tilde\chi^{(k)}$ involves three other mock theta functions called $\sigma$, $\xi$, and $\rho$.
Their definitions and completions are in Appendix~\ref{appendix}, together with the proof of the following lemma.

\begin{lemma} \label{lem:order-3-3}
Let
\begin{multline*}
    {H}_{(3,3)} 
    = \tilde{\xi}^{(0)}(-\e_{12}+\e_{24}) + \tilde{\xi}^{(1)}(-\e_6+\e_{30}) + \sum_{h\in \{1,19\}}\tilde{\chi}^{(0)}(\e_h - \e_{-h}) 
    + \sum_{h\in \{7,25\}} \tilde{\chi}^{(1)}(\e_h - \e_{-h}) \\
    - \sum_{h\in \{5,23\}}\tilde{\chi}^{(2)}(\e_h - \e_{-h})
    + \sum_{h\in \{3, 21\}} \tilde{\sigma}(\e_h - \e_{-h})
    + 2 \sum_{k=0}^5 (-1)^{k+1} \tilde \rho^{(k)}(\e_{6k+8} - \e_{-6k-8}). 
\end{multline*}
Then ${H}_{(3,3)}$ has principal part equal to $q^{-1/72}(\e_{1}-\e_{17}+\e_{19}-\e_{35})+(-\e_{12}+\e_{24})$ and is an element of $\mathcal{H}_{1/2}(\rho_{L(18)}).$
\end{lemma}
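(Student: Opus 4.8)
The plan is to verify directly, for $H_{(3,3)}$, the three defining properties of a harmonic Maass form of weight $1/2$ and type $\rho_{L(18)}$, following the same strategy by which \eqref{eq:third-order-h-T-S} produces $H_{(3,1)}$. Throughout I write $F_h$ for the coefficient of $\e_h$, so that $H_{(3,3)} = \sum_h F_h \e_h$ on $L'/L \cong \Z/36\Z$, and I use $\Mp_2(\Z) = \langle S, T\rangle$ to reduce the transformation law \eqref{eq:weil-rep-transformation-law} to these two generators. Harmonicity is essentially free: each completed function $\tilde\chi, \tilde\sigma, \tilde\xi, \tilde\rho$ is a weight-$1/2$ harmonic Maass form, and the sieved functions $\tilde\chi^{(k)}, \tilde\xi^{(k)}, \tilde\rho^{(k)}$ are finite linear combinations of these precomposed with the scalings $\tau \mapsto (\tau+\ell)/m$. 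Since $\Delta_{1/2}$ is invariant under the weight-$1/2$ slash by matrices of positive determinant, each precomposition is again annihilated by $\Delta_{1/2}$, so every $F_h$ is; this is property (2). The growth condition in (3), away from the principal part, follows from the exponential decay of the nonholomorphic Eichler integrals $\V_{N,a}$, whose shadows $\theta_{N,a}$ are cusp forms.

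For the $T$-transformation I would use \eqref{eq:T-transform}. On $L(18)$ the quadratic form is $q(h/36) = -h^2/72$, so $\rho_{L(18)}(T)$ acts diagonally by $\e_h \mapsto \zeta_{72}^{-h^2}\e_h$, and since $\phi \equiv 1$ for $T$ the required identity is $F_h(\tau+1) = \zeta_{72}^{-h^2}F_h(\tau)$ for each $h$. The sieving operators are built precisely so that $\tilde\chi^{(k)}, \tilde\xi^{(k)}, \tilde\rho^{(k)}$ acquire a single root of unity under $\tau \mapsto \tau+1$: expanding the definition of $\tilde\chi^{(k)}$ and using the $q$-expansion of $q^{-1/24}\chi(q)$ shows its holomorphic part is supported on exponents $(24n-1)/72$ with $n \equiv k \pmod 3$, which reproduces the eigenvalue $\zeta_{72}^{-h^2}$ on each relevant component (for instance $h\in\{1,19\}$ both satisfy $h^2 \equiv 1 \pmod{72}$, matching $k=0$). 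Checking this class by class is routine bookkeeping.

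The $S$-transformation is the crux and the main obstacle. Here $\rho_{L(18)}(S)$ is the normalized discrete Fourier transform \eqref{eq:S-transform} on $\C[\Z/36\Z]$, with the constant $i^{(b^--b^+)/2}$ fixed by the signature $(0,1)$ of the negative-definite form. I would take as input the $S$-transformation laws of $\tilde\chi, \tilde\sigma, \tilde\xi, \tilde\rho$ established in Appendix~\ref{appendix} (the analogue of the matrix identity in \eqref{eq:third-order-h-T-S}), which express each completed function at $-1/\tau$ as $\sqrt{-i\tau}$ times an explicit combination of the others, and then assemble them into the single identity $H_{(3,3)}(-1/\tau) = \sqrt{-i\tau}\,\rho_{L(18)}(S)H_{(3,3)}(\tau)$ by matching coefficients of $\e_h$. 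The difficulty is exactly this bookkeeping: $S$ mixes all thirty-six components simultaneously, it does not commute with the $\ell$-shifts defining the sieved functions, and the degree-six sieving of $\tilde\rho$ together with the half-index $\xi$-terms at $\e_6, \e_{12}, \e_{24}, \e_{30}$ make this the most intricate of the three order-$3$ cases. In practice I would first record the $S$-action on the unsieved functions, commute $S$ past the sieving by re-indexing the $\ell$-sums, and finally check that the resulting finite matrix agrees with the Shintani coefficients \eqref{eq:rho-alpha-beta-formula} for $\rho_{L(18)}(S)$.

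Finally, for the principal part I would isolate the holomorphic parts. The Eichler integrals contribute only to the nonholomorphic part and carry no polar terms, so all principal-part contributions come from the holomorphic mock theta pieces. The function $\tilde\chi^{(0)}$ has holomorphic part $\sum_{n\equiv 0\,(3)}\alpha_\chi(n)q^{(24n-1)/72}$, whose only polar term is $q^{-1/72}$ from $n=0$; via the $\e_h - \e_{-h}$ pairing this lands on $\e_1, \e_{19}$ and on $\e_{35}=\e_{-1}, \e_{17}=\e_{-19}$ with the stated signs, giving $q^{-1/72}(\e_1 - \e_{17} + \e_{19} - \e_{35})$. The function $\tilde\xi^{(0)}$ supplies a constant term on the integral components $\e_{12}, \e_{24}$ (where $q(h/36)\in\Z$), yielding $-\e_{12}+\e_{24}$, while $\tilde\xi^{(1)}, \tilde\sigma$, the remaining $\tilde\chi^{(k)}$, and all $\tilde\rho^{(k)}$ contribute no negative or constant exponents. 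Collecting these produces exactly $q^{-1/72}(\e_1 - \e_{17} + \e_{19} - \e_{35}) + (-\e_{12}+\e_{24})$, completing the verification.
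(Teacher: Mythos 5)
Your proposal is correct and follows essentially the same route as the paper: it takes the transformation laws of the completed functions $\tilde{\chi},\tilde{\sigma},\tilde{\xi},\tilde{\rho}$ (equivalently, of the vector $\hat H_{(3,3)}$ with its matrices $M_T$, $M_S$) as input from the appendix, reduces the modularity of $H_{(3,3)}$ to a finite check of the resulting linear-algebra identity against \eqref{eq:T-transform}, \eqref{eq:S-transform}, and reads the principal part off the holomorphic parts — exactly the verification the paper delegates to a computer algebra system. The only difference is that you spell out the $T$-eigenvalue and principal-part bookkeeping by hand rather than citing the machine computation.
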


Building on the results in Sections~4.3 and 4.4 of \cite{zwegers-thesis}, the first author expressed the fifth order mock theta functions\footnote{In view of \eqref{eq:chi0-F0-phi0} and \eqref{eq:chi1-F1-phi1} we will not discuss $\chi_0(q)$ or $\chi_1(q)$ for the remainder of the paper.} as harmonic Maass forms of type $\rho_{L(60)}$ as follows.
Let
\begin{align*}
    \tilde{f}_0(\tau) &= q^{-\tfrac{1}{60}}f_0(q) +\sum_{\substack{{0<h<60}\\{h\equiv\pm2 (10)}\\{\gcd(h,60)=2}}}\V_{60,h}(\tau),\\
    \tilde{f}_1(\tau) &= q^{\tfrac{11}{60}}f_1(q) +\sum_{\substack{{0<h<60}\\{h\equiv\pm4 (10)}\\{\gcd(h,60)=2}}}\V_{60,h}(\tau), \\
    \tilde{F}_0(\tau) &= q^{-\tfrac{1}{120}}(F_0(q)-1) +\frac12 \sum_{\substack{{0<h<60}\\{h\equiv\pm1 (10)}\\{\gcd(h,60)=1}}} a_h\V_{60,h} (2\tau), \\
    \tilde{F}_1(\tau) &= q^{\tfrac{71}{120}}F_1(q) +\frac12 \sum_{\substack{{0<h<60}\\{h\equiv\pm3 (10)}\\{\gcd(h,60)=1}}} a_h\V_{60,h} (2\tau),
\end{align*}
where
\[
    a_h=\begin{cases}1& \text{ if } 0<h<30,\\-1& \text{ otherwise}.\end{cases}
\]

Define $\tilde \phi_0, \tilde \phi_1, \tilde \psi_0, \tilde \psi_1$ by making the replacements $f_0\to -\psi_0$, $f_1 \to -\psi_1$, $F_0(q)-1 \to -\frac 12\phi_0(-q)$, $F_1(q)\to \frac 12q^{-1}\phi_1(-q)$, respectively, above (compare with $F_{5,2}$ and $G_{5,2}$ from Section~4.4 of \cite{zwegers-thesis}).
Then Lemma~5 of \cite{Andersen-Order-5}, together with the discussion in Section~6 of that paper, gives the following.
\begin{lemma} \label{lem-order5}
    Define
    \begin{multline}
        {H}_{(5,1)}(\tau)
        =\sum_{\substack{{0<h<60}\\{h\equiv\pm2 (10)}\\{\gcd(h,60)=2}}}\tilde{f}_0(\tau) (\mathfrak{e}_{h}-\mathfrak{e}_{-h})-\sum_{\substack{{0<h<60}\\{h\equiv\pm1 (10)}\\{\gcd(h,60)=1}}}(a_{h} \tilde{F}_0\lrp{\tfrac{\tau}{2}}+b_{h}  \zeta_{240}\tilde{F}_0(\tfrac{\tau+1}{2})) (\mathfrak{e}_{h}-\mathfrak{e}_{-h})\\
        +\sum_{\substack{0<h<60\\h\equiv\pm4 (10)\\\gcd(h,60)=2}}\tilde{f}_1(\tau) (\mathfrak{e}_{h}-\mathfrak{e}_{-h})-\sum_{\substack{0<h<60\\h\equiv\pm3 (10)\\\gcd(h,60)=1}}(a_{h} \tilde{F}_1(\tfrac{\tau}{2})+b_{h} \zeta_{240}^{-71}\tilde{F}_1(\tfrac{\tau+1}{2})) (\mathfrak{e}_{h}-\mathfrak{e}_{-h}), \label{eq:H51-def}
    \end{multline}
    where
    \begin{equation}
        b_h=\begin{cases}1& \text{ if } h\equiv\pm1,\pm13\pmod{60},\\-1& \text{ otherwise.}\end{cases}
    \end{equation}
    Define ${H}_{(5,2)}$ by making the replacements $\tilde f_0\to \tilde\psi_0$, $\tilde f_1 \to \tilde\psi_1$, $\tilde F_0 \to \tilde \phi_0$, $\tilde F_1\to \tilde\phi_1$ in \eqref{eq:H51-def}.
    Then ${H}_{(5,1)}, {H}_{(5,2)} \in \mathcal H_{1/2}(\rho_{L(60)})$.
    Furthermore, the principal part of $H_{(5,1)}$ equals
    \[
        q^{-\frac{1}{60}}(\e_2+\e_{22}+\e_{38}+\e_{58} - \e_{62}-\e_{82}-\e_{98}-\e_{118}),
    \]
    while the principal part of $H_{(5,2)}$ equals
    \[
        q^{-\frac{1}{240}}(\e_1-\e_{31}-\e_{41}-\e_{49}+\e_{71}+\e_{79}+\e_{89}-\e_{119}).
    \]
\end{lemma}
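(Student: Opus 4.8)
The plan is to verify directly that $H_{(5,1)}$ and $H_{(5,2)}$ meet the three defining conditions of $\mathcal H_{1/2}(\rho_{L(60)})$, importing the scalar transformation laws for the completed fifth-order functions as a black box. Real-analyticity is immediate. Harmonicity is checked block-by-block: each constituent $\tilde f_0,\tilde f_1,\tilde F_0,\tilde F_1$ (and the $\tilde\psi_i,\tilde\phi_i$ analogues) is a holomorphic $q$-series plus a nonholomorphic Eichler integral $\V_{60,h}$ of a weight-$3/2$ unary theta function, and such Eichler integrals are annihilated by $\Delta_{1/2}$; since the slash by the scalings $\tau\mapsto\tfrac\tau2,\tfrac{\tau+1}2$ preserves harmonicity, each $\e_h$-component of $H_{(5,1)}$ and $H_{(5,2)}$ is annihilated by $\Delta_{1/2}$. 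Each $\theta_{N,h}$ is supported on $n\equiv h\not\equiv 0\pmod{2N}$ and hence decays exponentially, so the same holds for every $\V_{60,h}$; thus the nonholomorphic part decays at $\infty$ as required. The two substantive conditions are therefore the transformation law \eqref{eq:weil-rep-transformation-law} and the shape of the principal part.

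Since $\Mp_2(\Z)$ is generated by $T$ and $S$, I would establish \eqref{eq:weil-rep-transformation-law} on these two generators, using the scalar laws of Section~4 of \cite{zwegers-thesis} and \cite[Lemma~5]{Andersen-Order-5}. Under $\tau\mapsto\tau+1$ each $\tilde f_i$ is multiplied by a root of unity, while the pair $\tilde F_i(\tfrac\tau2),\tilde F_i(\tfrac{\tau+1}2)$ is permuted and rephased; I would match this against the diagonal action $\rho_{L(60)}(T)\e_h=e(-h^2/240)\e_h$ of \eqref{eq:T-transform} by confirming that, on each index set appearing in \eqref{eq:H51-def}, the eigenvalue $e(-h^2/240)$ agrees with the phase acquired by the corresponding scalar function (e.g. $h^2\equiv 4\pmod{240}$ for $h\in\{2,22,38,58\}$, giving $e(-1/60)$), and that the scalars $a_h,b_h$ and the twists $\zeta_{240}^{\pm1}$ are exactly those making each $\e_h$-component a $T$-eigenfunction with the required eigenvalue.

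The $S$-transformation is the analogous but harder check, and I expect it to be the main obstacle. Under $\tau\mapsto-1/\tau$ Zwegers' laws mix the $f$-type and $F$-type functions into one another, and one must show these mixing coefficients are reproduced by the off-diagonal Gauss sums $\rho_{\alpha\beta}(S)=\tfrac{\zeta_8}{\sqrt{120}}\,e(hh'/120)$ arising from \eqref{eq:S-transform} and Shintani's formula \eqref{eq:rho-alpha-beta-formula}. This amounts to a finite family of exponential-sum identities relating Zwegers' transformation matrix for $(\tilde f_0,\tilde f_1,\tilde F_0,\tilde F_1)$ to the Weil matrix $\rho_{\alpha\beta}(S)$, together with the correct component assignment (the $f$-type functions occupying the classes $h\equiv\pm2,\pm4\pmod{10}$ and the $F$-type the classes $h\equiv\pm1,\pm3\pmod{10}$) and the correct signs $a_h,b_h$. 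This is precisely the content of \cite[Lemma~5]{Andersen-Order-5} and the discussion in its Section~6, so in practice I would cite those results and verify only that the present packaging into $H_{(5,1)}$ and $H_{(5,2)}$ reproduces them.

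Finally, I would read the principal parts off the holomorphic $q$-expansions, since the $\V$-terms decay. For $H_{(5,1)}$ only $\tilde f_0(\tau)=q^{-1/60}(1+O(q))+\cdots$ contributes a negative power, as $\tilde f_1,\tilde F_0,\tilde F_1$ begin at positive powers of $q$ after the substitutions $\tau\mapsto\tfrac\tau2,\tfrac{\tau+1}2$; summing $q^{-1/60}(\e_h-\e_{-h})$ over $h\in\{2,22,38,58\}$ and using $\e_{-h}=\e_{120-h}$ yields the stated principal part. For $H_{(5,2)}$ only the $\tilde\phi_0$-terms contribute, with $\tilde\phi_0(\tfrac\tau2)=-\tfrac12 q^{-1/240}+\cdots$ and $\tilde\phi_0(\tfrac{\tau+1}2)=-\tfrac12\zeta_{240}^{-1}q^{-1/240}+\cdots$; collecting the coefficient $\tfrac12(a_h+b_h)$ of $\e_h$ over the index set $h\in\{1,11,19,29,31,41,49,59\}$ leaves the four surviving indices $1,31,41,49$ (and their negatives), giving exactly the claimed principal part.
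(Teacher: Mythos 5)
Your proposal is correct and takes essentially the same route as the paper: the paper's entire justification for this lemma is the citation to Lemma~5 and Section~6 of \cite{Andersen-Order-5}, which is precisely where you defer the one substantive step (the $S$-transformation matching against the Weil representation). The routine verifications you add on top of that citation --- harmonicity, exponential decay of the $\V_{60,h}$, the $T$-eigenvalue check $e(-h^2/240)$, and the principal-part computations (in particular the coefficient $\tfrac12(a_h+b_h)$ surviving only at $h\in\{1,31,41,49\}$ for $H_{(5,2)}$) --- are all accurate and consistent with the paper's definitions.
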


For the seventh order functions, let
\begin{align}
    \tilde {\mathcal F}_0(\tau) &= q^{-\frac1{168}} \mathcal F_0(q) + \sum_{h\in \{1, 13, 29, 41\}} (\tfrac{-21}{h})\V_{42,h}(\tau)\\
    \tilde {\mathcal F}_1(\tau) &= q^{-\frac{25}{168}} \mathcal F_1(q)- \sum_{h\in \{5, 19, 23, 37\}}\V_{42,h}(\tau)  \\
    \tilde {\mathcal F}_2(\tau) &= q^{\frac{47}{168}} \mathcal F_2(q) - \sum_{h\in \{11, 17, 25, 31\}}\V_{42,h}(\tau).
\end{align}
Then Lemma~4 of \cite{andersen-7} gives the following.

\begin{lemma}
    Define
    \begin{multline}
    {H}_{(7)}(\tau) = \sum_{h\in \{1, 13, 29, 41\}} (\tfrac{-21}{h}) \tilde {\mathcal F_0}(\tau)(\mathfrak e_h - \mathfrak e_{-h}) \\- \sum_{h\in \{5, 19, 23, 37\}} \tilde {\mathcal F_1}(\tau)(\mathfrak e_h - \mathfrak e_{-h}) - \sum_{h\in \{11, 17, 25, 31\}} \tilde {\mathcal F_2}(\tau)(\mathfrak e_h - \mathfrak e_{-h}).
    \end{multline}
    Then the principal part of $H_{(7)}$ equals $q^{-1/168}(\e_1-\e_{13}-\e_{29}+\e_{41}-\e_{43}+\e_{55}+\e_{71}-\e_{83})$ and $H_{(7)} \in \mathcal H_{1/2}(\rho_{L(42)})$.
\end{lemma}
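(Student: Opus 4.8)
The plan is to deduce both assertions from the scalar transformation laws for the completed seventh order functions worked out by Zwegers \cite{zwegers-thesis} and repackaged in the vector-valued setting by the first author \cite{andersen-7}. Concretely, one must verify the three defining properties of $\mathcal{H}_{1/2}(\rho_{L(42)})$ for $H_{(7)}$: the transformation law \eqref{eq:weil-rep-transformation-law}, annihilation by $\Delta_{1/2}$, and the shape of the principal part. Since $\Mp_2(\Z)$ is generated by $T$ and $S$, it suffices to check the transformation law on these two generators.

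For $T$, the check is purely a matter of matching exponents. By \eqref{eq:T-transform} the generator $T$ acts on $\e_h$ (here $\e_h = \e_{h/84}$) by the scalar $e(q(h/84)) = e(-h^2/168)$. Each component of $H_{(7)}$ is a scalar multiple of one of $\tilde{\mathcal F}_0, \tilde{\mathcal F}_1, \tilde{\mathcal F}_2$, whose $q$-expansions begin with $q^{-1/168}$, $q^{-25/168}$, $q^{47/168}$ respectively and otherwise involve only integral shifts of these exponents. One then observes that the index sets were chosen precisely so that $-h^2$ is congruent modulo $168$ to $-1$, $-25$, or $47$ according to which function sits in the $\e_h$ slot; for instance $13^2 \equiv 29^2 \equiv 41^2 \equiv 1 \pmod{168}$ for the $\tilde{\mathcal F}_0$ indices. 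Harmonicity is immediate: each $\tilde{\mathcal F}_i$ is a weight $1/2$ harmonic Maass form whose holomorphic part is a power of $q$ times the mock theta function and whose nonholomorphic part is a linear combination of the Eichler integrals $\V_{42,h}$, each of which is annihilated by $\Delta_{1/2}$.

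The main obstacle is the $S$-transformation, where the three functions genuinely mix. Here one invokes Zwegers' modular transformation for the vector of completed seventh order functions, which expresses $\bigl(\tilde{\mathcal F}_0, \tilde{\mathcal F}_1, \tilde{\mathcal F}_2\bigr)(-1/\tau)$ as $\sqrt{-i\tau}$ times an explicit linear combination of the three functions. The task is to confirm that, after distributing the completed functions into their designated components $\e_h$ with the prescribed characters $\pfrac{-21}{h}$ and signs, this mixing matrix reproduces exactly the Shintani coefficients $\rho_{\alpha\beta}(S)$ given by \eqref{eq:rho-alpha-beta-formula} for the lattice $L(42)$. This is the content of Lemma~4 of \cite{andersen-7}, and the verification is the delicate step, since it requires the finite Gauss-sum identities that relate Zwegers' transformation constants to the discrete Fourier transform \eqref{eq:S-transform} over $\Z/84\Z$.

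Finally, I would read off the principal part from the $q$-expansions $\mathcal F_0(q) = 1 + O(q)$, $\mathcal F_1(q) = q + O(q^2)$, and $\mathcal F_2(q) = 1 + O(q)$. After multiplying by the prefactors, only $\tilde{\mathcal F}_0$ acquires a negative $q$-power, namely the constant term $q^{-1/168}$; the shifted leading exponents of $\tilde{\mathcal F}_1$ and $\tilde{\mathcal F}_2$ are positive, and the Eichler integrals decay exponentially, so neither contributes a polar term. Collecting the $\tilde{\mathcal F}_0$ contributions, weighted by $\pfrac{-21}{h}$ for $h \in \{1, 13, 29, 41\}$ together with the sign coming from $\e_h - \e_{-h}$, yields precisely $q^{-1/168}(\e_1 - \e_{13} - \e_{29} + \e_{41} - \e_{43} + \e_{55} + \e_{71} - \e_{83})$, as claimed.
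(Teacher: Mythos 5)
Your proposal is correct and takes essentially the same route as the paper: the paper's entire proof is a citation of Lemma~4 of \cite{andersen-7}, which is exactly where you place the burden of the $S$-transformation, the only nontrivial step. Your added verifications of the $T$-consistency (the congruences $h^2 \equiv 1,\, 25,\, -47 \pmod{168}$ on the three index sets), of harmonicity, and of the principal part (the values $\pfrac{-21}{h} = 1, -1, -1, 1$ for $h = 1, 13, 29, 41$ combined with $\e_h - \e_{-h}$ give exactly the stated eight terms) are all accurate.
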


\section{Algebraic traces of Poincar\'e series}

\label{sec:alg-traces-poincare-series}

Bruinier and Schwagenscheidt \cite{bruinier-schwagenscheidt} expressed the coefficients of the third order mock theta functions $f(q)$ and $\omega(q)$ as algebraic traces of CM values of a $\Gamma_0(6)$-invariant modular function.
Following their work, Klein and Kupka \cite{kk} gave similar formulas for each of the fifth and seventh order functions, together with other second, sixth, eighth, and tenth order mock theta functions not appearing in Ramanujan's last letter. 
Using their results as a starting point, we relate all of Ramanujan's mock theta function coefficients to algebraic traces of CM values of Poincar\'e series.
This will quickly lead to the transcendental formulas in Section~\ref{sec:transcendental-formulas}, assuming uniform convergence of the relevant sums in the auxiliary parameter. 
These convergence issues will be settled in Section~\ref{sec:goldfeld-sarnak}.

\subsection{Quadratic forms and Atkin-Lehner involutions}\label{sec:quad-forms-atkin-lehner} 
For a negative discriminant $D$, let $\mathcal Q_{N,D}$ denote the set of integral binary quadratic forms
\[
\mathcal Q_{N,D}= \lrc{[a,b,c]=ax^2+bxy+cy^2: b^2-4ac=D \text{ and } N|a}.
\]
These are either positive or negative definite, and we write $\mathcal Q^{\pm}_{N,D}$ to denote the forms in $\mathcal Q_{N,D}$ which are positive or negative definite, respectively.
For $r\in \Z$ satisfying $r^2\equiv D\pmod{4N}$, let $\mathcal Q_{N,D,r}^{\pm}$ denote the subset
\[
    \mathcal Q_{N,D,r}^{\pm} = \{ [a,b,c] \in \mathcal Q_{N,D}^{\pm} : b\equiv r\pmod{2N} \}.
\]
Matrices in $\SL_2(\R)$ act on quadratic forms via
\begin{equation} \label{eq:sl2r-action-on-qforms}
    \ptmatrix \alpha\beta\gamma\delta Q(x,y) = Q(\delta x - \beta y, -\gamma x+\alpha y).
\end{equation}
In particular, if $g\in \Gamma_0(N)$ and $Q\in \mathcal Q_{N,D,r}^\pm$ then $gQ \in \mathcal Q_{N,D,r}^\pm$.

If $F$ is a $\Gamma_0(N)$-invariant function, then the trace of $F$ of index $D,r$ is defined as
\begin{equation} \label{eq:tr-F-pm-def}
    \tr_F^\pm(D,r) = \sum_{Q\in \Gamma_0(N) \backslash \mathcal Q^\pm_{N,D,r}} \frac{F(\tau_Q)}{\omega_Q},
\end{equation}
where
\begin{equation}
    \tau_Q = \frac{-b}{2a} + i\frac{\sqrt{|D|}}{2|a|}
\end{equation}
is the solution of the equation $Q(\tau,1)=0$ in the upper half-plane and $\omega_Q$ is one-half the order of the stabilizer $\Gamma_0(N)_Q$ of $Q$.
Note that for $g\in \SL_2(\R)$ we have
\begin{equation} \label{eq:zgq=gzq}
    \tau_{gQ} = g \tau_Q,
\end{equation}
so the sum in \eqref{eq:tr-F-pm-def} is well-defined.

The following result of Alfes expresses the Fourier coefficients of the Millson theta lift\footnote{We follow the convention of \cite{bruinier-schwagenscheidt} in multiplying the Millson theta lift of \cite{alfes} by $i/\sqrt{N}$.} of $F$ in terms of the traces $\tr_F$. 

\begin{theorem}[Theorem~4.3.1 of \cite{alfes}]\label{thm:Alfes-Trace}
    Let $F$ be a $\Gamma_0(N)$-invariant modular function.
    Then the Millson theta lift $\mathcal {I}^M(F,\tau) = \sum_{h}g_h \e_h$ is a harmonic Maass form of weight $1/2$ and type $\rho_{L(N)}$.
    Let $D$ be a negative discriminant and choose $r\in \Z/2N\Z$ with $D\equiv r^2\pmod{4N}$.
    Then the coefficient of $q^{|D|/4N}$ in the holomorphic part of $g_r$ equals
    \begin{equation}\label{eq:alfes-trace}
        \frac{i}{\sqrt{|D|}} \left( \tr_F^+(D,r) - \tr_F^-(D,r) \right).
    \end{equation}
\end{theorem}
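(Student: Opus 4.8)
The statement computes the Fourier expansion of a specific regularized theta lift, and the plan is to follow the standard unfolding strategy for theta integrals in the style of Borcherds, Bruinier--Funke, and Katok--Sarnak. First I would realize $\Gamma_0(N)$ as (essentially) the orthogonal group of an even lattice $V(\Z)$ of signature $(1,2)$ whose discriminant group is $\Z/2N\Z$, so that the associated Weil representation matches $\rho_{L(N)}$ up to the conjugation dictated by the signature and the symmetric space is $\H$, viewed as the Grassmannian of positive lines in $V(\R)$. Under this identification a vector $X\in V$ of negative norm $q(X)=D/4N<0$ corresponds to an integral binary quadratic form $Q_X=[a,b,c]$ with $N\mid a$, $b^2-4ac=D$, and $b\equiv r\pmod{2N}$; its orthogonal complement meets $\H$ in the CM point $\tau_{Q_X}=\frac{-b}{2a}+i\frac{\sqrt{\abs D}}{2\abs a}$, and the definiteness of $Q_X$ records on which side of the positive line $X$ sits. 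I would then write the Millson theta kernel $\Theta_M(\tau,z)=\sum_{\alpha\in L'/L}\theta_\alpha(\tau,z)\,\e_\alpha$, built from the Millson Schwartz function $\varphi_M$, which transforms with weight $1/2$ and type $\rho_{L(N)}$ in $\tau$ and is $\Gamma_0(N)$-invariant of weight $0$ in $z$, and define $\mathcal I^M(F,\tau)=\int^{\mathrm{reg}}_{\Gamma_0(N)\backslash\H}F(z)\,\Theta_M(\tau,z)\,d\mu(z)$, where the regularization is needed because $F$ is only weakly holomorphic and the naive integral diverges at the cusps.

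The transformation of $\mathcal I^M(F,\cdot)$ in $\tau$ with weight $1/2$ and type $\rho_{L(N)}$ is inherited verbatim from that of $\Theta_M$. For harmonicity I would use the intertwining relation between the weight-$1/2$ Laplacian $\Delta_{1/2,\tau}$ acting on $\Theta_M$ and the weight-$0$ hyperbolic Laplacian $\Delta_{0,z}$, namely that $\Delta_{1/2,\tau}\Theta_M$ equals a constant multiple of $\Delta_{0,z}\Theta_M$ modulo a total derivative. Since $F$ is $\Gamma_0(N)$-invariant of weight $0$ we have $\Delta_{0,z}F=0$ away from its poles, so moving the Laplacian onto $F$ by regularized integration by parts gives $\Delta_{1/2}\mathcal I^M(F,\cdot)=0$; the boundary contributions coming from the regularization and from the poles of $F$ at the cusps produce the principal part and account for the lift being harmonic rather than weakly holomorphic.

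The heart of the argument is the Fourier computation, carried out by unfolding. I would interchange the lattice sum in $\Theta_M$ with the regularized integral and group the vectors $X\in L'$ by their class $r\in L'/L$ and their norm. For the coefficient of $q^{\abs D/4N}$ in the holomorphic part of $g_r$, only the negative-norm vectors of discriminant $D$ in class $r$ contribute; each such $X$ has a finite elliptic stabilizer of order $2\omega_Q$ because $X^\perp$ is a positive line, so the combined $\Gamma_0(N)$-sum and integral collapse to a finite sum over $Q\in\Gamma_0(N)\backslash\mathcal Q_{N,D,r}^{\pm}$, weighted by $1/\omega_Q$, of $F(\tau_Q)$ times a single archimedean integral. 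The defining feature of the Millson (as opposed to Kudla--Millson) Schwartz function is that its polynomial factor is odd across the two orientations of the positive line, so positive- and negative-definite forms enter with opposite signs; this is precisely what turns the two orbit sums into the difference $\tr_F^+(D,r)-\tr_F^-(D,r)$.

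The remaining step, which I expect to be the main obstacle, is to evaluate the archimedean factor and extract the constant. After localizing at $\tau_Q$ the surviving integral is essentially a Gaussian whose $\tau$-dependence, upon passing to the holomorphic part and matching the exponential $e(\abs D\tau/4N)$, must be determined exactly; the identity $\sqrt{\abs D}=2\abs a\,\im(\tau_Q)$ relating the discriminant to the CM point is what produces the normalizing factor $1/\sqrt{\abs D}$, while the factor $i$ traces back to the $i^{(b^--b^+)/2}$ normalization in \eqref{eq:S-transform} together with the orientation convention built into $\varphi_M$. The delicate points are tracking all of the Weil-representation and Schwartz-function normalizations consistently and verifying that the regularization contributes nothing to these particular negative-norm (CM) coefficients, so that the holomorphic coefficient is exactly $\frac{i}{\sqrt{\abs D}}\lrp{\tr_F^+(D,r)-\tr_F^-(D,r)}$.
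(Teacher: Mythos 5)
The first thing to note is that the paper contains no proof of this statement at all: it is imported verbatim as Theorem~4.3.1 of Alfes' thesis \cite{alfes}, so there is no internal argument to compare with, and the relevant benchmark is the proof in \cite{alfes} (and the closely related work of Alfes--Schwagenscheidt). Your sketch does reproduce the architecture of that proof: the Millson theta kernel attached to a rational quadratic space realizing $\Gamma_0(N)$, with discriminant group $\Z/2N\Z$; the regularized theta integral; unfolding over $\Gamma_0(N)$-orbits of lattice vectors of fixed norm and class $r$; and the observation that the Millson Schwartz function is odd in $X$, which is exactly what produces the signed difference $\tr_F^+(D,r)-\tr_F^-(D,r)$.

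There is, however, a genuine gap at the decisive step. After unfolding, the coefficient in question is a sum over $Q\in\Gamma_0(N)\backslash\mathcal{Q}^{\pm}_{N,D,r}$ of $\omega_Q^{-1}\int_{\H}F(z)\,\varphi(X_Q,z)\,d\mu(z)$, and you assert that this integral equals $F(\tau_Q)$ times an archimedean constant because the Schwartz function is ``essentially a Gaussian'' concentrated at $\tau_Q$. Concentration alone cannot yield an exact identity: for a general $\Gamma_0(N)$-invariant $F$ this integral depends on the values of $F$ on all of $\H$, not only at $\tau_Q$. The exact localization is precisely where harmonicity of $F$ in $z$ must be used, via the current (Green function) equation for the Millson/Kudla--Millson forms --- schematically $\varphi(X_Q,\cdot)=c\,\delta_{\tau_Q}+\Delta_0\,\xi(X_Q,\cdot)$ as currents, so that $\int F\varphi=c\,F(\tau_Q)+\int(\Delta_0F)\,\xi=c\,F(\tau_Q)$ --- together with a verification that the regularization at the cusps and the singularity of $\xi$ at $\tau_Q$ contribute nothing. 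You invoke $\Delta_0 F=0$ only to prove harmonicity of the lift in $\tau$, never at this step, and you explicitly defer the evaluation of the archimedean constant, i.e.\ the factor $i/\sqrt{|D|}$ in \eqref{eq:alfes-trace}, which is the actual content of the theorem. A smaller but real inconsistency: having chosen the Grassmannian of positive lines in a signature $(1,2)$ space, CM points correspond to vectors of \emph{positive} norm $|D|/4N$ via $z=\operatorname{span}(X)$; a negative-norm vector's orthogonal complement meets that Grassmannian in a geodesic, not a point, so your description mixes this convention with the signature $(2,1)$, positive-two-plane convention, which would also force bookkeeping changes in how the lift's type matches $\rho_{L(N)}$. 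As written, the proposal is a correct outline of the strategy of \cite{alfes}, not a proof.
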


We are primarily interested in modular functions with Atkin-Lehner symmetries.
There is one Atkin-Lehner involution for each unitary divisor $u$ of $N$, meaning that $(u,N/u)=1$. We will sometimes write this condition as $u\mid\mid N$. The set $U_N$ of unitary divisors of $N$ forms a group under the operation 
\[
    a\ast b = \frac{ab}{(a,b)^2}.
\]
Note that every non-identity element of $U_N$ has order $2$, so any character of $U_N$ must take values in $\{-1,1\}$.
The Atkin-Lehner involution corresponding to $u\in U_N$ is the map $F(\tau) \mapsto F(W_u \tau)$, where
\[
    W_u= \frac{1}{\sqrt{u}}\ptMatrix{\alpha u}{\beta}{\gamma N}{\delta u} \text{ for any } \alpha, \beta, \gamma, \delta \in \Z \text{ with } \alpha\delta u^2 - \beta \gamma N = u.
\]
If $F$ is $\Gamma_0(N)$-invariant then this map does not depend on the choice of $\alpha, \beta, \gamma, \delta$.
If $u$ and $r$ are both unitary divisors of $N$ then $W_u W_{r} = \gamma W_{u\ast r}$ for some $\gamma\in \Gamma_0(N)$.

Let $Q=[a,b,c]$, with $N\mid a$ and $b\equiv r\pmod{2N}$, be a quadratic form in $\mathcal Q^{\pm}_{N,D,r}$. 
By \eqref{eq:sl2r-action-on-qforms}, the coefficient of $xy$ in $W_u Q$ equals
\begin{equation}
    -2a\beta\delta+b\beta\gamma\frac{N}{u}+b\alpha\delta u-2c\alpha\gamma N \equiv r \left( \beta \gamma \frac{N}{u} + \alpha \delta u \right) \pmod{2N}.
\end{equation}
Let 
\begin{equation} \label{eq:psi-def}
    \psi(u) = \beta \gamma \frac{N}{u}+\alpha \delta u = 1 + 2\beta \gamma \frac{N}{u} = -1 + 2\alpha\delta u
\end{equation}
so that $W_u$ maps $\mathcal Q_{N,D,r}^{\pm}$ to $\mathcal Q_{N,D,\psi(u)r}^{\pm}$.
By \eqref{eq:psi-def} we have
\begin{equation} \label{eq:psi-cong}
    \psi(u) \equiv 1\pmod{2N/u} \quad \text{ and } \quad \psi(u) \equiv -1 \pmod{2u}.
\end{equation}
Since $(u,N/u)=1$, these congruences uniquely define $\psi(u)$ modulo $2N$.
Furthermore, they show that $\psi(u)^2 \equiv 1\pmod{2N}$.
Define
\begin{equation}
    \Psi_{2N} = \psi(U_N) \subseteq \{x\in (\Z/2N\Z)^\times : x^2=1\}.
\end{equation}
A calculation involving \eqref{eq:psi-cong} shows that $\psi:U_N\to \Psi_{2N}$ is injective and that
\begin{equation}
    \psi(u\ast r) \equiv \psi(u) \psi(r) \pmod{2N}.
\end{equation}
Thus $\psi : U_N \to \Psi_{2N}$ is a group isomorphism.

\subsection{Weakly holomorphic Poincar\'e series}

We specialize to the case when the modular function $F$ is a weakly holomorphic Poincar\'e series.
This construction is similar to the one in Section~4 of \cite{AA}.
We start with the real analytic Poincar\'e series
\begin{equation} \label{eq:Fm-poincare-def}
    F_{m}(\tau,s):=\sum_{\gamma \in \Gamma_\infty \setminus \Gamma_0(N)} \phi_s(m\Im \gamma \tau) e(-m\Re \gamma \tau),
\end{equation}
where $m \in \Z_+$ and
\[
    \phi_s(y):= 2\pi \sqrt{y} \, I_{s-1/2}(2\pi y).
\]
This series converges absolutely and uniformly on compact subsets of $\re(s)>1$.

Let $\ep$ be a character of $U_N$
such that $\ep(N)=-1$ and let 
\[
    f_{m}(\tau, s)= \sum_{r\mid\mid N} \ep(r) F_m(W_r\tau, s).
\]
Then for each $u \in U_N$, we have 
\begin{equation} \label{eq:fm-Wq-eig}
    f_m(W_u \tau,s) = \ep(u) f_m(\tau,s).
\end{equation}
To obtain Ramanujan's mock theta functions, we will eventually specialize $(N,\ep)$ to one of the pairs
\begin{equation} \label{eq:N-ep-pairs}
    (6,\ep_3), \quad (18, \ep_3), \quad (24,\ep_3), \quad (42, \ep_2\ep_3\ep_7), \quad (60, \ep_2\ep_3\ep_5),
\end{equation}
where $\ep_p$ is defined on prime powers $r^n \in U_N$ by 
$\ep_p(r^n) = -1$ if and only if $r=p$.
For the special pairs \eqref{eq:N-ep-pairs} and others, we will show that $f_m(\tau,s)$ can be analytically continued to $\re(s)>\frac 34$ and that $f_m(\tau,1)$ is a weakly holomorphic modular form.

Let $S_k^\ep(N)$ (resp.~$M_k^{!,\ep}(N)$) denote the vector space of cusp forms $f$ (resp.~weakly holomorphic modular forms) of weight $k$ on $\Gamma_0(N)$ satisfying $f(W_u\tau)=\ep(u)f(\tau)$ for all $u\in U_N$.
Then we have the following theorem. 
\begin{proposition} 
\label{prop:poincare_principal}
    Suppose that all cusps of $\Gamma_0(N)$ are of the form $\pm\frac{1}{\ell}$ where $\ell|N$. Let $\ep$ be a character of $U_N$ such that $S_2^\varepsilon(N)=\lrc{0}$. Then 
    \begin{equation} \label{eq:fm-limit-def}
        f_m(\tau) := \lim_{s\rightarrow 1} \sum_{r||N} \ep(r)F_m(W_r\tau, s)\in M_0^{!,\ep}(N).
    \end{equation}
    Furthermore, for each $u \in U_N$ we have $f_m(W_u\tau) - \ep(u)q^{-m} = O(1)$. 
    The function $f_m(\tau)$ is holomorphic at each cusp of $\Gamma_0(N)$ that is not of the form $W_u\infty$ for some $u\in U_N$.
\end{proposition}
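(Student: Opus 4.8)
The plan is to treat $F_m(\tau,s)$ as a classical weight-$0$ Maass--Poincar\'e series, to extract the weakly holomorphic form as the limit of its meromorphic continuation at the spectral edge $s=1$, and to use the Atkin--Lehner averaging against the nontrivial character $\ep$ both to make the limit exist and to force holomorphy. First I would record that for $\Re(s)>1$ the series \eqref{eq:Fm-poincare-def} defines a $\Gamma_0(N)$-invariant real-analytic eigenfunction of $\Delta_0$ with eigenvalue $s(1-s)$, since $\phi_s(my)e(-mx)$ is such an eigenfunction on $\H$ and the sum averages it over $\Gamma_\infty\backslash\Gamma_0(N)$. Next I would establish the Atkin--Lehner eigenproperty $f_m(W_u\tau,s)=\ep(u)f_m(\tau,s)$: writing $W_rW_u=\gamma W_{r\ast u}$ with $\gamma\in\Gamma_0(N)$ and using the $\Gamma_0(N)$-invariance of $F_m$, the substitution $r\mapsto r\ast u$ permutes $U_N$ and produces the factor $\ep(r\ast u)=\ep(r)\ep(u)$ via the isomorphism $\psi$ and the character property of $\ep$. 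This identity persists under analytic continuation.

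\textbf{Continuation and cancellation of the pole at $s=1$ (the crux).} The main work is the meromorphic continuation of $F_m(\tau,s)$ to a punctured neighborhood of $s=1$. Here I would invoke the spectral theory of $\Delta_0$ on $\Gamma_0(N)\backslash\H$: the continuation is holomorphic for $\Re(s)>\tfrac34$ except for at most a simple pole at $s=1$, whose residue is a constant function arising from the pole of the Eisenstein series at $s=1$. The restriction $\Re(s)>\tfrac34$ comes from Selberg's bound $\lambda_1\ge\tfrac3{16}$, which keeps $s(1-s)$ off the cuspidal spectrum, and the hypothesis that every cusp is $\pm\tfrac1\ell$ is used to control the Fourier and Eisenstein expansions at the cusps. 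The decisive point is that a constant function is fixed by every $W_r$, so $\res_{s=1}F_m(W_r\tau,s)$ equals the same constant $c_m$ for all $r$; hence
\[
\res_{s=1}f_m(\tau,s)=c_m\sum_{r\mid\mid N}\ep(r)=0,
\]
because $\ep(N)=-1$ makes $\ep$ nontrivial and the character sum over $U_N$ vanishes. Therefore $f_m(\tau,s)$ is holomorphic at $s=1$ and the limit $f_m(\tau)$ exists. I expect this continuation together with the identification of the $s=1$ residue as a constant to be the only genuinely hard part; everything else is bookkeeping built on the eigenproperty.

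\textbf{Weak holomorphy, growth, and cusp behavior.} Since $s(1-s)=0$ at $s=1$, passing the eigenvalue equation through the locally uniform limit gives $\Delta_0 f_m=0$, so $f_m$ is a weight-$0$ harmonic form of type $\ep$ on $\Gamma_0(N)$. I would then apply $\xi_0$: the shadow of a Maass--Poincar\'e series of index $m\ge 1$ is a holomorphic weight-$2$ Poincar\'e series, so $\xi_0 f_m\in S_2^{\ep}(N)$, and the hypothesis $S_2^{\ep}(N)=\{0\}$ forces $\xi_0 f_m=0$, i.e.\ $f_m$ is holomorphic on $\H$. For the principal part, the identity coset of $\Gamma_\infty\backslash\Gamma_0(N)$ contributes $\phi_1(my)e(-mx)=2\sinh(2\pi m y)\,e(-mx)=q^{-m}-\overline{q}^{\,m}$, whose only growing part is $q^{-m}$, while every other coset and every other $W_r$-term stays bounded as $\tau\to i\infty$; thus $f_m(\tau)-q^{-m}=O(1)$, and applying the eigenproperty gives $f_m(W_u\tau)-\ep(u)q^{-m}=O(1)$. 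Finally, $F_m(\tau,s)$ grows only at $\infty$, since that is the only cusp where the frequency $-m$ appears, so $F_m(W_r\tau,s)$ grows only at $W_r\infty$; hence $f_m$ is holomorphic at every cusp not of the form $W_u\infty$, and together with the controlled growth at the remaining cusps this shows $f_m\in M_0^{!,\ep}(N)$.
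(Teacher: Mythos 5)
Your skeleton---continue to $s=1$, then use $\xi_0 f_m \in S_2^{\ep}(N)=\{0\}$ to force weak holomorphy, then read off the cusp behavior---matches the paper's. But the step you yourself call the crux has a genuine gap: the continuation. The series $F_m(\tau,s)$ is built from $\phi_s(y)=2\pi\sqrt{y}\,I_{s-1/2}(2\pi y)$, the \emph{exponentially growing} solution of the eigenvalue equation, so $F_m(\cdot,s)$ grows like $e^{2\pi m y}$ at the cusp $\infty$ and is not square-integrable; the spectral theory of $\Delta_0$ on $L^2(\Gamma_0(N)\backslash\H)$ therefore cannot be invoked to continue it, and the statement you import (holomorphy for $\re(s)>\tfrac34$ except a simple pole at $s=1$ with constant residue coming from the Eisenstein series) is not a theorem about this object. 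In fact no such pole exists: for $m\geq 1$ the Poincar\'e series has zero projection onto the constants (unfolding $\langle\,\cdot\,,1\rangle$ kills the nonzero frequency $e(-mx)$), so the residue you propose to cancel with the character sum $\sum_{r\mid\mid N}\ep(r)=0$ is zero to begin with; the $\ep$-averaging is needed for a different reason, namely to place $\xi_0 f_m$ in the space $S_2^{\ep}(N)$, which vanishes, rather than in $S_2(\Gamma_0(N))$, which need not. The paper instead obtains the continuation by computing the Fourier expansion of $F_m$ at every cusp (Proposition~\ref{prop:poincare-fourier}), whose coefficients are sums of classical Kloosterman sums against $I$- and $J$-Bessel factors; the Weil bound $S(m,n,c)\ll c^{1/2+\ep}$ makes those sums converge absolutely for $\re(s)>\tfrac34$, giving a pole-free analytic continuation of each $F_m(W_r\tau,s)$ \emph{individually}. (That your barrier $\tfrac34$ agrees with the paper's is a coincidence of constants: yours comes from $\lambda_1\geq\tfrac{3}{16}$, the paper's from the exponent $\tfrac12$ in the Weil bound.)

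A second, related gap is your growth argument at the cusps: you reason termwise (``the identity coset contributes $q^{-m}-\bar q^{\,m}$, every other coset stays bounded''), but the defining series \eqref{eq:Fm-poincare-def} converges only for $\re(s)>1$ and diverges at $s=1$, so termwise analysis of the limit function is not justified. This also exposes an internal inconsistency: if, as you hypothesize, each $F_m(W_r\tau,s)$ had a pole at $s=1$, the individual limits whose cusp expansions you analyze would not even exist. The paper's route settles both issues at once: the evaluated expansion \eqref{eq:poincare-fourier} shows that at the cusp represented by $R_\ell^{\pm}$ the value $F_m(R_\ell^{\pm}\tau,1)$ equals $\delta_{\pm\ell=N}\left(e(-m\tau)-e(-m\bar\tau)\right)$ plus a constant plus exponentially decaying terms, which simultaneously proves that $\xi_0 F_m(\cdot,1)$ is a weight-$2$ cusp form, that $f_m(W_u\tau)-\ep(u)q^{-m}=O(1)$, and that $f_m$ is holomorphic at the cusps not of the form $W_u\infty$.
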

\begin{remark*}
    The cusp condition is likely unnecessary, but it simplifies the proof significantly. 
    By Proposition 2.6 of \cite{iwaniec-classical}, when $N \in \{6,18,24,42,60\}$, all cusps of $\Gamma_0(N)$ are of the form $\pm \frac 1\ell$, with $\ell\mid N$. 
    Computation in Sage shows that $S_2^\ep(N)=\lrc{0}$ for each of the pairs \eqref{eq:N-ep-pairs}.
\end{remark*}

To prove Proposition~\ref{prop:poincare_principal}, we follow the argument given in Section~4 of \cite{AA}.
The first step is to compute the Fourier expansion of $F_m(\tau,s)$ at each cusp of $\Gamma_0(N)$.
By assumption, each cusp of $\Gamma_0(N)$ is of the form $\pm\frac 1\ell$ for $\ell \mid N$, so it suffices to compute $F_m(R_\ell^{\pm} \tau,s)$ where $R_\ell^{\pm} = \ptmatrix 10{\pm \ell}1$.
Furthermore, $R_N^{+}$ and $R_{N}^{-}$ represent the same cusp, so when $\ell=N$ we need only consider $R_N^+$.
We write the coefficients in terms of the classical Kloosterman sum
\[
    S(m,n,c) = \sum_{\substack{d\bmod c \\ (c,d)=1}} e\pfrac{m\bar d + nd}{c}
\]
and the $I,J,$ and $K$-Bessel functions.

\begin{proposition} \label{prop:poincare-fourier}
    Let $\ell,\ell',m\in \Z_+$ with $N=\ell\ell'$.
    Then for $\re(s)>1$ we have
    \begin{align*}
        F_{m}(R_\ell^{\pm}\tau, s) &= 2\delta_{\pm \ell=N}\pi \sqrt{my} I_{s-1/2}(2\pi my) e(-mx)\\
        &+a_s(0)y^{1-s} + 2\sqrt{my} \sum_{n\ne 0} a_s(n) K_{s-1/2}\lrp{2\pi \abs{\frac{n}{\ell'}}y} e\left(\frac{n}{\ell'} x\right),
    \end{align*}
    where 
    \begin{align*}
        a_s(0) & =\frac{2\pi^{s+1}m^s}{(\ell')^2(s-1/2)\Gamma(s)} \sum_{\substack{0<c\\ \ell\mid c}}c^{-2s} \sum_{k (\ell')} e\pfrac{\mp ck}{N} S\lrp{-m\ell', ck, c\ell' },
        \\
        a_s(n) &= \frac{2\pi}{(\ell')^2} \sum_{\substack{0<c\\ \ell|c}}c^{-1}\sum_{k (\ell')} e\pfrac{\mp ck}{N} S\lrp{-m\ell', n+ck, c \ell'} 
        \begin{cases} 
        I_{2s-1}\pfrac{4\pi \sqrt{mn/\ell'}}{c} & \text{ if } n>0, \\
        J_{2s-1}\pfrac{4\pi \sqrt{m|n|/\ell'}}{c} & \text{ if } n<0.
        \end{cases}
    \end{align*}
\end{proposition}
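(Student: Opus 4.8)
The plan is to compute the Fourier expansion of $F_m(R_\ell^\pm \tau, s)$ directly from its definition \eqref{eq:Fm-poincare-def} by unfolding the sum over $\Gamma_\infty \backslash \Gamma_0(N)$ against the translate by $R_\ell^\pm$, grouping cosets according to their lower-left entry, and then applying the standard Fourier-expansion technology for Poincar\'e series. Writing a general element of $\Gamma_0(N)$ as $\ptmatrix abcd$ with $N \mid c$, the coset representatives in $\Gamma_\infty \backslash \Gamma_0(N)$ are indexed by pairs $(c,d)$ with $c \geq 0$ and $(c,d)=1$ (plus the $c=0$ identity coset), and for each fixed $c$ the parameter $d$ runs modulo $c$ once we account for the left $\Gamma_\infty$-action. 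The composition $\gamma R_\ell^\pm$ shifts the lower row, and a short matrix computation shows that the coset data reorganizes into a sum over a new modulus $c\ell'$ (where $\ell' = N/\ell$) with the residue parameter $k \bmod \ell'$ tracking the interaction between $d$ and the shift $\pm\ell$; this is the source of the inner sum $\sum_{k(\ell')}$ and the twisting phase $e(\mp ck/N)$ in the stated coefficients.

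First I would isolate the $c=0$ term, which survives only when $R_\ell^\pm$ represents the cusp $\infty$ itself, i.e.\ when $\pm\ell = N$ — this yields the $\delta_{\pm\ell = N}$ Kronecker delta multiplying the ``incoming'' term $2\pi\sqrt{my}\,I_{s-1/2}(2\pi m y)e(-mx)$. Next, for the $c \neq 0$ contribution, I would substitute $\phi_s(y) = 2\pi\sqrt{y}\,I_{s-1/2}(2\pi y)$ into the sum, expand $\Im(\gamma R_\ell^\pm \tau)$ and $\Re(\gamma R_\ell^\pm \tau)$ in terms of $\tau = x+iy$ and the matrix entries, and integrate the resulting expression against $e(-\frac{n}{\ell'}x)$ over a period to extract the $n$-th Fourier coefficient. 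The $d$-sum over a fixed modulus collapses into the classical Kloosterman sum $S(-m\ell', n+ck, c\ell')$ once the exponentials in the numerator and denominator of the argument of $\phi_s$ are disentangled (this is where the argument $-m\ell'$ and the shifted index $n+ck$ appear). The remaining integral over $y$ is a standard Bessel integral: the $n=0$ case produces $\Gamma$-factors and the power $y^{1-s}$, while the $n \neq 0$ case evaluates to the $K$-Bessel function $K_{s-1/2}(2\pi|n/\ell'|y)$ times a coefficient built from an $I$- or $J$-Bessel transform of $\phi_s$, the split $I_{2s-1}$ vs.\ $J_{2s-1}$ reflecting the sign of $n$ (equivalently whether the classical Bessel integral $\int_0^\infty I_{s-1/2}(ay)\,\cdots$ lands in the oscillatory or monotone regime).

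The main obstacle will be the careful bookkeeping of the coset reparametrization under right multiplication by $R_\ell^\pm$: one must verify that as $\gamma$ ranges over $\Gamma_\infty \backslash \Gamma_0(N)$ the products $\gamma R_\ell^\pm$ sort correctly into residue classes modulo $c\ell'$, that the scaling cusp-width factor $\ell'$ enters consistently in the arguments $n/\ell'$ and in the Kloosterman modulus $c\ell'$, and that the phase $e(\mp ck/N)$ emerges with the correct sign. Getting the normalizations exactly right — the factors of $(\ell')^{-2}$, the power $\pi^{s+1}m^s$, and the $(s-1/2)\Gamma(s)$ in $a_s(0)$ — requires tracking constants through the Bessel integrals, but these are routine once the combinatorial structure of the cosets is pinned down. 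Absolute convergence for $\re(s)>1$, which justifies interchanging summation and integration throughout, follows from the same estimate that guarantees convergence of \eqref{eq:Fm-poincare-def}, namely the asymptotic decay of $I_{s-1/2}(2\pi m\Im\gamma\tau)$ as $\Im\gamma\tau \to 0$ combined with the trivial bound on the number of cosets with bounded $c$.
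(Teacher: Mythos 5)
Your plan follows essentially the same route as the paper's proof: parametrize the cosets of $\Gamma_\infty \backslash \Gamma_0(N) R_\ell^{\pm}$ by their lower rows, exploit periodicity $\ell'$ to extract Fourier coefficients by integration, detect the congruence condition linking $c$ and $d$ via an exponential sum over $k \bmod \ell'$ (producing the phase $e(\mp ck/N)$ and the shifted index $n+ck$ in the Kloosterman sum $S(-m\ell',n+ck,c\ell')$), evaluate the remaining $x$- and $y$-integrals as standard Bessel integrals, and treat $\ell = N$ separately by removing the $c=0$ term, which is exactly the source of the $\delta_{\pm\ell=N}$ term. The bookkeeping you flag as the main obstacle is indeed where the paper spends its effort (the substitution $x \mapsto x - d/c$ and the identity $\Re(\gamma(\tau - \tfrac dc)) = \tfrac ac - \tfrac{x}{c^2|\tau|^2}$), but your outline contains all the correct ingredients in the correct order.
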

\begin{proof}
First we consider the case where $\ell \neq N$. Notice that 
\[R_\ell^{\pm}\ptmatrix{1}{N/e}{0}{1}(R_\ell^{\pm})^{-1}\in \Gamma_0(N),\]
which shows that $F_{m}(R_\ell^{\pm}\tau, s)$ is periodic with period $\ell'$.
Furthermore, $\Gamma_\infty \backslash \Gamma_0(N)R_\ell^\pm$ is represented by the set 
\[
    \lrc{\gamma =\ptmatrix{a}{b}{c}{d} : \det \gamma =1, (d,Nc)=1, c\equiv \pm\ell d\bmod N, 1\leq a \leq c-1}.
\]
Since $F_{m}(R_\ell^{\pm}\tau, s)$ has period $\ell'$, we may write 
\begin{align*}
    F_{m}(R_\ell^{\pm}\tau, s) &= \sum_{n\in \Z} A(n,y,s) e\left(\frac{n}{\ell'}x\right), \\
A(n,y,s) &= \frac{1}{\ell'} \int_0^{\ell'} \sum_{\gamma \in \Gamma_\infty \backslash \Gamma_0(N)R_\ell^\pm} \phi(m\Im \gamma \tau) e(-m\Re \gamma \tau) e\lrp{-\frac{n}{\ell'}x} dx.
\end{align*}
Swapping the sum and integral, and replacing $x\rightarrow x-d/c$, we find that
\[A(n,y,s)=\frac{1}{\ell'} \sum_{\gamma \in \Gamma_\infty \backslash \Gamma_0(N)R_\ell^\pm}e\pfrac{-am+\frac{n}{\ell'}d}{c}\int_{d/c}^{\ell'+d/c}  \phi\pfrac{my}{c^2|\tau|^2}  e\lrp{\frac{m}{c^2}\frac{x}{|\tau|^2}-\frac{n}{\ell'}x} dx,\]
where we have used that $\Re(\gamma(\tau-\tfrac{d}{c}))=\tfrac{a}{c}-\frac{x}{c^2|\tau|^2}.$
Let $G(c,x)$ denote the integrand above. Then
\begin{align}
    A(n,y,s) 
    &= \frac{1}{\ell'} \sum_{\substack{0<c\\\ell \mid c}} \sum_{\substack{d\equiv \pm c/\ell (\ell')\\(c\ell',d)=1 \\ 0\le d < c\ell'}} e\pfrac{-am\ell'+nd}{c\ell'} \int_{-\infty}^{\infty} G(c,x) dx
\end{align}
using the fact that $\ell|(c,N)$, so $(Nc,d)=1$ if and only if $(c\ell',d)=1.$ 
We write the condition $d\equiv \pm c/\ell(\ell')$ as an exponential sum and rearrange to obtain
\begin{align*}
    A(n,y,s) 
    &= \frac{1}{(\ell')^2} \sum_{\substack{0<c\\\ell\mid c}} \sum_{k (\ell')} e\pfrac{\mp ck}{N} S(-m\ell', n+ck, c\ell') \int_{-\infty}^{\infty} G(c,x) dx.
\end{align*}
The integral can be calculated exactly as in the proof of Proposition~5 of \cite{AA}.

Observe that the argument holds in the case where $\ell=N$, except that we must instead compute the Fourier expansion of the Poincar\'e series with the $c=0$ term removed, that is, $F(\tau,s)-\phi_s(my)e(-mx)$.
\end{proof}

\begin{proof}[Proof of Proposition~\ref{prop:poincare_principal}]

The classical Kloosterman sum satisfies $S(m,n,c) \ll c^{1/2+\ep}$.
This bound, together with a straightforward argument involving estimates for the $I$ and $J$-Bessel functions (see Chapter~10 of \cite{dlmf}) shows that the Fourier expansion of Proposition~\ref{prop:poincare-fourier} yields the analytic continuation of $F_m(\tau,s)$ to $\re(s)>\frac 34$. 
See Section~4 of \cite{AA} for details.

Since the Bessel functions specialize to $2\sqrt y \, K_{1/2}(y) = \sqrt{2\pi}e^{-y}$ and $\sqrt{\pi y} \, I_{1/2}(y) = \sqrt 2 \, \sinh y$, we have, in the notation of Proposition~\ref{prop:poincare-fourier},
\begin{multline}\label{eq:poincare-fourier}
    F_m(R_\ell^{\pm}\tau, 1) = \delta_{|\ell|=N} (e(-m\tau)-e(-m\bar\tau)) + a_1(0)\\ +\sqrt{m\ell'} \, \sum_{n>0} \frac{a_1(n)}{\sqrt n} e\lrp{\frac{n}{\ell'}\tau} + \sqrt{m\ell'} \, \sum_{n < 0} \frac{a_1(n)}{\sqrt{|n|}} e\lrp{\frac{n}{\ell'}\overline{\tau}}.
\end{multline}
Then $\xi_0 F_m(\tau, 1)$ is a cusp form of weight $2$, where $\xi_0 = \overline{\partial_{\bar z}}$. Thus $\xi_0 f_m(\tau) \in S_2^{\ep}(N)=\{0\}$.
It follows that $f_m(\tau)$ is weakly holomorphic.
Finally, \eqref{eq:poincare-fourier} shows that $f_m(\tau)$ has the desired behavior at each cusp.
\end{proof}

In the case $N=18$ we evaluate $a_1(0)$ for later use.

\begin{lemma} \label{lem:const-18}
    Suppose that $N=18$, that $m=1$, and that $\ell \in \{3,6\}$. Then, in the notation of Proposition~\ref{prop:poincare-fourier}, we have
    \[
        a_1(0) =
        \begin{cases}
            2 \zeta_3^{\mp 1} & \text{ if } \ell = 3, \\
            - \zeta_3^{\pm 1} & \text{ if } \ell = 6.
        \end{cases}
    \]
\end{lemma}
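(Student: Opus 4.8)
The plan is to substitute $s=1$ and $m=1$ directly into the formula for $a_s(0)$ in Proposition~\ref{prop:poincare-fourier} and evaluate the resulting Dirichlet series in closed form. Since $\Gamma(1)=1$ and $s-\tfrac12=\tfrac12$ at $s=1$, the prefactor collapses to $4\pi^2/(\ell')^2$, where $N=\ell\ell'$. The series $\sum_{\ell\mid c}c^{-2}\sum_{k}e(\mp ck/N)\,S(-\ell',ck,c\ell')$ converges absolutely at $s=1$ by the Weil bound $S(m,n,c)\ll c^{1/2+\ep}$ used in the proof of Proposition~\ref{prop:poincare_principal}, so I may manipulate it term by term. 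Throughout I write $c=\ell c_1$, so that $\ell\mid c$ becomes $c_1\in\Z_+$.

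The first step is to collapse the inner sum over $k$ by orthogonality. Expanding the Kloosterman sum and using $e(\mp ck/N)=e(\mp c_1 k/\ell')$ together with the factor $e(kd/\ell')$ that appears after writing $ckd/(c\ell')=kd/\ell'$, I obtain
\[
    \sum_{k(\ell')} e\pfrac{\mp c_1 k}{\ell'}\, S(-\ell',ck,c\ell')
    = \ell'\!\!\sum_{\substack{d\bmod c\ell',\ (d,c\ell')=1\\ d\equiv \pm c_1\,(\ell')}}\!\! e\pfrac{-\bar d}{c},
\]
where $\bar d$ denotes the inverse of $d$. Thus the double sum reduces to $\ell'$ times an exponential sum restricted to a single residue class modulo $\ell'$.

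The heart of the argument is the evaluation of this restricted sum as a function of $c_1=c/\ell$. Since a residue with $d\equiv\pm c_1\pmod{\ell'}$ can be coprime to $c\ell'$ only when $(c_1,\ell')=1$, the sum vanishes unless $(c_1,\ell')=1$, which restricts the outer sum to $c=\ell j$ with $(j,\ell')=1$. For such $j$ I would factor the modulus by the Chinese Remainder Theorem and recognize the sum over the primes dividing $j$ (which are coprime to $N$) as a complete Ramanujan-type sum: it contributes a factor $\mu(j)$, vanishing unless $j$ is squarefree, while the primes dividing $N$ collapse to a fixed cube root of unity $\zeta_3^{\pm1}$ whose exponent is governed by the sign in $R_\ell^{\pm}$. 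A direct check of the cases $j\in\{1,5,7\}$ for $\ell=3$ confirms this shape, giving inner sum equal to $18\,\zeta_3^{\mp1}\mu(j)$ (and the analogous constant for $\ell=6$).

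Finally, summing the Dirichlet series over the admissible $j$ gives
\[
    \sum_{\substack{j\ge 1\\ (j,\ell')=1}} \frac{\mu(j)}{j^2}
    = \frac{1}{\zeta(2)}\prod_{p\mid \ell'}\lrp{1-\frac{1}{p^2}}^{-1},
\]
and the factor $1/\zeta(2)=6/\pi^2$ cancels the $\pi^2$ in the prefactor, leaving an algebraic number. For $\ell=3$ this yields $\tfrac{4\pi^2}{36}\cdot\tfrac{18}{9}\,\zeta_3^{\mp1}\cdot\tfrac{9}{\pi^2}=2\,\zeta_3^{\mp1}$, and the $\ell=6$ computation gives $-\zeta_3^{\pm1}$, as claimed. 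I expect the main obstacle to be the third step: establishing both the vanishing of the restricted Gauss/Ramanujan sum off the squarefree, coprime-to-$\ell'$ locus and the precise cube-root-of-unity phase (with its dependence on the cusp sign $\pm$), while keeping the two cases $\ell=3,6$ straight and matching the overall rational constant.
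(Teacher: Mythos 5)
Your strategy is essentially the same as the paper's proof: open the Kloosterman sum, use the $k$-sum to impose the congruence $d\equiv\pm c_1\pmod{\ell'}$, factor the restricted sum by the Chinese Remainder Theorem into Ramanujan-type local sums, and finish with the Euler product for $1/\zeta(2)$. Your orthogonality identity in step 2 is correct, and so is the $\ell=3$ case: there $(j,\ell')=1$ means $(j,6)=1$, the local sums at the primes dividing $N$ contribute $3\zeta_3^{\mp1}$ as you claim, the sum over units modulo $j$ is a Ramanujan sum equal to $\mu(j)$, and the arithmetic $\frac{4\pi^2}{36}\cdot\frac{18}{9}\cdot\zeta_3^{\mp1}\cdot\frac{9}{\pi^2}=2\zeta_3^{\mp1}$ checks out.

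The gap is in the case $\ell=6$, where $\ell'=3$ and the admissible indices $j$ are only required to be coprime to $3$, hence may be even. Your parenthetical assertion that the primes dividing $j$ are coprime to $N$ then fails, and with it the claimed factorization into ``a phase from the primes dividing $N$ times $\mu(j)$ from $j$.'' What actually happens is this: writing $j=2^aj_1$ with $a\ge1$ and $j_1$ odd, the local sum at $2$ runs over the units modulo $2^{a+1}$ against an exponential of exact denominator $2^{a+1}$, so it equals $\mu(2^{a+1})=0$; thus every even $j$ contributes $0$, a vanishing that the factor $\mu(j)$ does not record (e.g.\ $\mu(2)=-1$, yet the $j=2$ term vanishes, as a direct check of $\sum e(-\bar d/12)$ over $(d,36)=1$, $d\equiv\pm2\pmod 3$ shows). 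Consequently the Dirichlet series you display, $\sum_{(j,\ell')=1}\mu(j)/j^2$, is the wrong one for $\ell=6$: it equals $27/(4\pi^2)$, and feeding it into your framework produces $-\tfrac34\zeta_3^{\pm1}$ rather than the claimed $-\zeta_3^{\pm1}$. The series must instead run over $(j,6)=1$ (value $9/\pi^2$), together with an extra factor $-1$ coming from the single unit modulo $2$. This is exactly what the paper's device $t=\ell/3$, with Möbius condition $k\mid(d,tc)$, accomplishes: the weight $\mu(tc)$ automatically annihilates even $c$ when $t=2$ and supplies the factor $\mu(t)=-1$, giving $\sum_{(c,\ell')=1}\mu(tc)/c^2=9\mu(t)/\pi^2$ uniformly in both cases. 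You would also need to derive, rather than assert, the phase flip from $\zeta_3^{\mp1}$ when $\ell=3$ (where $\bar d\,\bar j\equiv\bar j^{\,2}\equiv1\pmod 3$) to $\zeta_3^{\pm1}$ when $\ell=6$ (where $\bar d\,\overline{2j}\equiv\pm2\pmod 3$); as written, your $\ell=6$ conclusion is stated but not established.
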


\begin{proof}
    By opening the Kloosterman sum, evaluating the $k$-sum, and replacing $c$ by $\ell c$, we find that
    \[
        a_1(0) = \frac{2\pi^2}{9\ell} \sum_{\substack{c>0}} c^{-2} \sum_{\substack{d(18c)^\times \\ d\equiv \pm c (\ell')}} e\pfrac{-\bar d}{\ell c} = \frac{2\pi^2}{9\ell} \sum_{\substack{c>0 \\ (\ell',c) = 1}} c^{-2} \sum_{\substack{d(18c) \\ d\equiv \pm \bar c (\ell')}} e\pfrac{-d}{\ell c} \sum_{k\mid (d,18c)} \mu(k),
    \]
    where $d\bar d\equiv 1\pmod{18c}$ and $c\bar c \equiv 1\pmod{\ell'}$.
    To satisfy $(d,\ell') = 1$, we need $(k,\ell')=1$, so the condition on the rightmost sum can be replaced by $k\mid (d,tc)$, where $t=\ell/3$.
    If we replace $d$ by $kd$ and write $d=\pm \overline{kc} + \ell'n$, we obtain
    \[
        a_1(0) = \frac{2\pi^2}{9\ell} \sum_{\substack{c>0 \\ (\ell',c) = 1}} c^{-2} \sum_{k\mid tc} \mu(k) e\pfrac{\mp \overline{kc}}{\ell c/k} \sum_{n(\ell c/k)} e\pfrac{-\ell'n}{\ell c/k}.
    \]
    The latter sum vanishes unless $\ell c/k$ divides $\ell'$, and this only holds when $k=tc$. In this case $\ell c/k = \ell/t = 3$. Since $c^2 \equiv 1\pmod{3}$ and $\bar t \equiv t \pmod 3$, it follows that
    \[
        a_1(0) = \frac{2\pi^2}{9 t} \zeta_3^{\mp t} \sum_{\substack{c>0 \\ (\ell',c) = 1}} \frac{\mu(tc)}{c^2}.
    \]
    By a standard argument involving the Euler product of $1/\zeta(2)$, the latter sum evaluates to $9\mu(t)/\pi^2$, and the lemma follows.
\end{proof}

\subsection{Traces of CM values of Poincar\'e series} \label{sec:traces-of-poincare}
Our aim is to apply Theorem~\ref{thm:Alfes-Trace} when $F=f_m$, then to relate the completions of the mock theta functions to $\mathcal I^M(f_m,\tau)$.
This was done in \cite{bruinier-schwagenscheidt} and \cite{kk} for each of Ramanujan's mock theta functions (with explicit eta quotients in place of $f_m$) except for the order 3 functions $\phi, \psi,$ and $\chi$.
For completeness, we give a sketch of the method here; see those papers for more details.

For simplicity, let $t_m(D,r)$ denote the quantity \eqref{eq:alfes-trace} when $F=f_m$, i.e.
\begin{equation}
    t_m(D,r) = i|D|^{-1/2}\left(\tr_{f_m}^+(D,r) - \tr_{f_m}^-(D,r)\right).
\end{equation}

\begin{theorem} \label{thm:vec-value-coeffs}
    Let $(N,\ep)$ be one of the pairs in \eqref{eq:N-ep-pairs} and let $D,r$ be as in Theorem~\ref{thm:Alfes-Trace}. If $N=18$, additionally assume that $m=1$.
    Suppose that $H = \sum_{r} H_r \mathfrak e_r \in \mathcal H_{1/2}(\rho_{L(N)})$ has principal part 
    \begin{equation} \label{eq:match-lift-principal}
        q^{-m^2/4N} \sum_{h\in \Psi(2N)}\varepsilon(\psi^{-1}(h))\mathfrak{e}_{mh} + 
        \begin{cases}
            -\mathfrak e_{12} + \mathfrak e_{-12} & \text{ if $N=18$ and $m=1$}, \\
            0 & \text{ otherwise}.
        \end{cases}
    \end{equation}
    Then the coefficient of $q^{|D|/4N}$ in the Fourier expansion of $H_r$ equals
    \begin{equation} \label{eq:mu-poincare-traces}
        \tfrac 12\sum_{v\mid m} \mu(m/v) t_v(D,r).
    \end{equation}
\end{theorem}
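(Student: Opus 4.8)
The plan is to realize $H$ as a Möbius combination of Millson theta lifts of the Poincaré series $f_v$ and to read off its coefficients from Theorem~\ref{thm:Alfes-Trace}. Set
\[
    \tilde H := \tfrac12 \sum_{v \mid m} \mu(m/v)\, \mathcal I^M(f_v, \tau).
\]
Since $f_v \in M_0^{!,\ep}(N)$ by Proposition~\ref{prop:poincare_principal}, Theorem~\ref{thm:Alfes-Trace} shows that each $\mathcal I^M(f_v,\tau)$ lies in $\mathcal H_{1/2}(\rho_{L(N)})$ and that the coefficient of $q^{|D|/4N}$ in its $r$-component is $t_v(D,r)$. Hence the coefficient of $q^{|D|/4N}$ in $\tilde H_r$ is precisely $\tfrac12\sum_{v\mid m}\mu(m/v)\,t_v(D,r)$, the right-hand side of \eqref{eq:mu-poincare-traces}. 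It therefore suffices to prove $H=\tilde H$, which I would do by matching principal parts.

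The crux is to compute the principal part of $\mathcal I^M(f_v,\tau)$. By Proposition~\ref{prop:poincare_principal} the function $f_v$ is holomorphic away from the cusps $W_u\infty$ ($u\in U_N$), where $f_v(W_u\tau)=\ep(u)q^{-v}+O(1)$, and for $N=18$ it carries in addition the constant term recorded in Lemma~\ref{lem:const-18}. The principal part of a Millson lift is supported on square indices $D=w^2$, where the relevant quadratic forms degenerate to cusps, and I would extract it by unfolding the regularized theta integral against the cusp expansions of $f_v$. The Atkin--Lehner eigenrelation $f_v(W_u\tau)=\ep(u)f_v(\tau)$, combined with the fact (Section~\ref{sec:quad-forms-atkin-lehner}) that $W_u$ carries $\mathcal Q_{N,D,r}$ to $\mathcal Q_{N,D,\psi(u)r}$, forces the layer of the principal part at index $w$ to be a multiple of $\sum_{h\in\Psi_{2N}}\ep(\psi^{-1}(h))\,\e_{wh}$. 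The delicate point is the range of $w$: I expect the degenerate contributions of the single pole $q^{-v}$ to appear at every square index $w^2$ with $w\mid v$, giving principal part
\[
    2\sum_{w\mid v} q^{-w^2/4N}\sum_{h\in\Psi_{2N}}\ep(\psi^{-1}(h))\,\e_{wh}
\]
together with $2(-\e_{12}+\e_{-12})$ in the case $N=18$, $v=1$, coming from the constant $a_1(0)$ of Lemma~\ref{lem:const-18}. Verifying this divisor-sum structure is the heart of the argument and the step I expect to be hardest; it is exactly what the Möbius weights in $\tilde H$ are designed to invert.

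Granting this, Möbius inversion over the divisors of $m$ collapses the double sum, so that the principal part of $\tilde H$ equals $q^{-m^2/4N}\sum_{h\in\Psi_{2N}}\ep(\psi^{-1}(h))\,\e_{mh}$, together with $-\e_{12}+\e_{-12}$ when $N=18$ and $m=1$. This coincides with the hypothesized principal part \eqref{eq:match-lift-principal} of $H$, so $H-\tilde H\in\mathcal H_{1/2}(\rho_{L(N)})$ has vanishing principal part and hence decays exponentially (there being a single cusp for forms of type $\rho_{L(N)}$). Its shadow $\xi_{1/2}(H-\tilde H)$ is then a weight-$3/2$ cusp form of type $\bar\rho_{L(N)}$; by the Bruinier--Funke pairing its Petersson inner product against any cusp form is computed from the now-vanishing principal part of $H-\tilde H$, so pairing it with itself gives $\xi_{1/2}(H-\tilde H)=0$. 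Thus $H-\tilde H$ is a holomorphic cusp form of weight $1/2$, which must vanish as there are no nonzero cusp forms of weight $1/2$. Therefore $H=\tilde H$, and reading off the coefficient of $q^{|D|/4N}$ in $\tilde H_r$ yields \eqref{eq:mu-poincare-traces}.
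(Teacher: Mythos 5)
Your strategy coincides with the paper's: form $\tilde H = \tfrac12\sum_{v\mid m}\mu(m/v)\,\mathcal I^M(f_v,\tau)$, compute its principal part, match it against \eqref{eq:match-lift-principal}, and conclude $H=\tilde H$ because a form in $\mathcal H_{1/2}(\rho_{L(N)})$ with vanishing principal part must vanish. But the step you yourself flag as the heart of the argument --- that the principal part of $\mathcal I^M(f_v,\tau)$ equals $2\sum_{w\mid v}q^{-w^2/4N}\sum_{h\in\Psi_{2N}}\ep(\psi^{-1}(h))\e_{wh}$, plus $2(-\e_{12}+\e_{-12})$ when $N=18$, $v=1$ --- is asserted, not proved, and it does not follow formally from Atkin--Lehner symmetry. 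Symmetry forces each layer to be a multiple of $\sum_{h\in\Psi_{2N}}\ep(\psi^{-1}(h))\e_{wh}$, but it tells you neither which indices $w$ occur and with what multiplicity, nor that the cusp constants vanish for every pair in \eqref{eq:N-ep-pairs} except $(18,\ep_3)$. The paper settles exactly these points by invoking Theorem~5.1 of Alfes--Schwagenscheidt, which expresses the principal part of the Millson lift through complementary traces $t^c(F;-Nd^2,h)$: evaluating these for the Poincar\'e series (only the cusp $\infty$ contributes, and the resulting sum $\sum_{a(2Nd)}e(\mp ma/2Nd)$ vanishes unless $2Nd\mid m$) is precisely what produces the divisor condition $w\mid v$, while the constants $C_h$ require a separate cotangent analysis showing they can be nonzero only for $N=18$, $h\in\{12,24\}$, plus the explicit evaluation of $a_1(0)$ in Lemma~\ref{lem:const-18}. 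Without this computation your argument is an outline of the paper's proof rather than a proof.

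A second, smaller gap: your closing claim that ``there are no nonzero cusp forms of weight $1/2$'' is false as a blanket statement. Unary theta series such as $\eta(24\tau)=\sum_{n\geq 1}\pfrac{12}{n}q^{n^2/24}$ are weight-$1/2$ cusp forms; in the vector-valued picture they transform under the dual representation $\bar\rho_{L(N)}$, not $\rho_{L(N)}$. What is true, and what the paper uses, is the specific vanishing $S_{1/2}(\rho_{L(N)})=0$, obtained from the isomorphism $S_{1/2}(\rho_{L(N)})\cong J_{1,N}^{\mathrm{cusp}}$ of Eichler--Zagier together with Skoruppa's theorem that weight-one Jacobi cusp forms vanish. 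Your Bruinier--Funke pairing argument correctly reduces $H-\tilde H$ to an element of $S_{1/2}(\rho_{L(N)})$ (this reproves, in effect, Lemma~2.3 of Bruinier--Schwagenscheidt), but the final vanishing needs the Eichler--Zagier--Skoruppa input; as written it rests on a general fact that is not true.
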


\begin{proof}
    The first step is to compute the principal part of the Millson theta lift $\mathcal I^M(f_m,\tau)$ using Theorem~5.1 of \cite{alfes-schwagenscheidt}.
    We begin by briefly recalling the notation of Sections~2 and 5 of that paper (more details can be found there).

    Let $V$ denote the rational quadratic space
    \[
        V=\lrc{X = \ptMatrix{x_2}{x_1}{x_3}{-x_2}: x_1,x_2, x_3\in \Q}
    \] 
    with the associated bilinear form $\langle X, Y\rangle = -N\tr(XY)$ and quadratic form $Q(X) = N\det(X).$ 
    The cusps of $\Gamma=\Gamma_0(N)$ are associated with the isotropic lines $\text{Iso}(V)$ in $V$ via the map\footnote{We suspect that there is a small sign error in the definition of $\psi$ in Section~2.1 of \cite{alfes-schwagenscheidt} since the relation $\psi(g(\alpha:\beta)) = g\psi((\alpha:\beta))$ does not hold with the definition given there. Here the minus sign is in the top left corner instead of the bottom right.} 
    \[
        \psi((\alpha:\beta)) = \text{span}\lrp{\ptMatrix{-\alpha\beta}{\alpha^2}{-\beta^2}{\alpha\beta}}.
    \]
    The cusp at $\infty$ is associated to $\lambda_\infty = \text{span}\lrp{\ptmatrix{0}{1}{0}{0}}.$ 
    For $\lambda\in \text{Iso}(V)$, let $\sigma_\lambda\in \SLtZ$ denote a matrix sending $\lambda_\infty$ to $\lambda$ via conjugation. 
    
    Let $\Lambda$ and $\Lambda'$ denote the lattice 
    \[
        \Lambda = \lrc{ \ptMatrix{b}{-a/N}{c}{-b}:a,b,c\in \Z},
    \] 
    and its dual lattice 
    \[
        \Lambda' = \lrc{ \ptMatrix{b/2N}{-a/N}{c}{-b/2N}:a,b,c\in \Z}.
    \]
    Then $\Lambda'/\Lambda \cong \Z/2N\Z$.
    For $h\in \Z/2N\Z$, let
    \[
        \hat h = \ptMatrix {h/2N}00{-h/2N}.
    \]
    It is straightforward to check that the map
    $h/2N \pmod \Z \mapsto \hat h \pmod \Lambda$
    from $L'(N)/L(N)$ to $\Lambda'/\Lambda$ induces an isomorphism $\mathcal H_k(\rho_{L(N)}) \to \mathcal H_k(\rho_\Lambda)$ that respects the bilinear forms on each side, in which $\mathfrak e_h$ is simply replaced by $\mathfrak e_{\hat h}$.
    
    For $n\in \Q_{<0}$ and $h\in \Z/2N\Z$, let
    \[
        \Lambda_{n,h} = \{ X\in \Lambda+\hat h: Q(x) = n\}.
    \]
    If $n=-Nd^2$ with $d\in \Q$ then each $X\in \Lambda_{n,h}$ is orthogonal to two lines $\lambda_X=\text{span}(Y)$ and $\lambda_{-X} = \text{span}(\tilde{Y}).$ We distinguish $\lambda_X$ from $\lambda_{-X}$ by choosing $\lambda_X$ such that
    \[\sigma_{\lambda_X}^{-1}X = d\ptMatrix{1}{-2r_{\lambda_X}}{0}{-1}\]
    for some $r_{\lambda_X}$.

    Let $F\in \mathcal H_0(N)$. Then, by Theorem $5.1$ of \cite{alfes-schwagenscheidt}, the principal part of $\mathcal{I}^M(F,\tau)$ is given by 
    \begin{equation} \label{eq:lift-pp}
        \sum_{h(2N)} \sum_{d>0} \frac{1}{2Nd} t^c(F; -Nd^2, h)q^{-Nd^2} \mathfrak e_h + \sum_{h(2N)} C_h \mathfrak e_h,
    \end{equation}
    for some $C_h\in \C$, where $t^c$ is the complementary trace
    \[
        t^c(F;-Nd^2, h) = \sum_{X\in \Gamma \backslash \Lambda_{-Nd^2,h}} \lrp{\sum_{w<0}a_{\lambda_X}^+(w) e(wr_{\lambda_X}) - a_{\lambda_{-X}}^+(w) e(wr_{\lambda_{-X}})},
    \] 
    and $a_{\lambda}^+(w)$ is the coefficient of $q^w$  of the holomorphic part of $F(\sigma_{\lambda_X}\tau)$.

    When $F=F_m$, by Proposition~\ref{prop:poincare-fourier} we find that $a_{\lambda}^+(w)=1$ if $\lambda=\lambda_\infty$ and $w=-m$, and $0$ otherwise. 
    Also, if $\lambda_{\pm X}=\lambda_\infty$ then $X$ is of the form $\ptmatrix{\pm d}{-a/N}{0}{\mp d},$ with $r_{\lambda_X}=\mp \tfrac{a}{2Nd}$. 
    Thus 
    \[
        t^c(F;-Nd^2, h) =  
        \begin{dcases}
            \pm\sum_{a(2Nd)} e\pfrac{\mp ma}{2Nd} & \text{ if } \frac h{2N} \equiv \pm d\pmod 1,\\
            0 & \text{ otherwise.}
        \end{dcases}
    \]
    If we write $d=\frac v{2N}$ with $v\in \Z$
    then the first term of \eqref{eq:lift-pp} equals
    \[
        \sum_{v|m} q^{-v^2/4N} (\mathfrak{e}_v -\mathfrak{e}_{-v}).
    \]

    By Theorem~5.1 of \cite{alfes-schwagenscheidt}, the constant $C_h$ in \eqref{eq:lift-pp} equals zero unless $\hat h\notin \Lambda$ and $\lambda \cap (\Lambda+\hat h) \neq \emptyset$ for some $\lambda \in \Gamma\backslash \operatorname{Iso}(V)$.
    By assumption, the cusps of $\Gamma_0(N)$ are of the form $(1:\pm \ell)$ for $\ell\mid N$, and a computation shows that for $\lambda = \psi((1:\pm \ell))$, the set $\lambda \cap (\Lambda+\hat h)$ is nonempty if and only if $\operatorname{lcm}(2\ell,2\ell') \mid h$, where $\ell\ell'=N$.
    In that case, 
    \[
        \hat h_\lambda := \ptMatrix{-\frac{h}{2N}}{\mp \frac{h}{2\ell}}{\pm \frac{h}{2\ell'}}{\frac{h}{2N}} \in \lambda \cap (\Lambda+\hat h)
    \]
    and thus $\beta_\lambda = 1/\ell$ and $k_\lambda = \mp h/2\ell N$.
    The formula for $C_h$ involves the factor $\cot(\pi k_\lambda / \beta_\lambda) = \cot(\mp \pi h/2N)$, which equals zero if $h=N$.
    Thus, the lcm condition above shows that for $N \in \{6, 18, 24, 42, 60\}$, the only $N$ for which some $C_h$ could be nonzero is $N=18$, with $C_{12}$ and $C_{24}$ possibly nonzero. In those cases, we have
    \[
        C_h = \frac{i}{6}  \sum_{\pm}  \cot\left(\mp \frac{\pi h}{36}\right) \sum_{\ell\in \{3,6\}} a_{\lambda}(0).
    \]
    By Lemma~\ref{lem:const-18} we have $C_{12} = -\frac 1{2}$ and $C_{24} = \frac 1{2}$.

    Write $\mathcal I^M(F,\tau) = \sum_h \mathcal I_{h}^M(F,\tau) \mathfrak e_h$.
    Then for each $r \in U_N$, by Proposition 3.2.5 of \cite{alfes} we have
    \[ 
        \mathcal{I}^M_{h'} (\tau, F \circ W_r) = \mathcal{I}^M_{h} (\tau, F),
    \]
    where $\hat h = W_r \hat h' W_r^{-1}$.
    In the notation of \eqref{eq:psi-def} we have
    \[
        h'=\lrp{\beta\gamma\frac{N}{r}+\alpha\delta r}h = \psi(r)h.
    \]
    From this and M\"obius inversion it follows that
    the principal part of $\mathcal{I}^M(\sum_{v|m} \mu(m/v)  f_v,\tau)$ equals
    \begin{equation}
        2q^{-m^2/4N} \sum_{h\in \Psi(2N)}\varepsilon(\psi^{-1}(h))\mathfrak{e}_{mh} + 
        \begin{cases}
            2(-\mathfrak e_{12} + \mathfrak e_{-12}) & \text{ if $N=18$ and $m=1$}, \\
            0 & \text{ otherwise}.
        \end{cases}
    \end{equation}

    Let $G = \frac 12\mathcal{I}^M(\sum_{v\mid m} \mu(m/v) f_v, \tau)-H$.
    Then the principal part of $G$ equals zero, so by Lemma~2.3 of \cite{bruinier-schwagenscheidt}, $G$ is a cusp form in $S_{1/2}(\rho_{L(N)})$. 
    But this space is isomorphic to the space $J_{1,N}^{\text{cusp}}$ of Jacobi cusp forms of weight $1$ and index $N$ (see Theorem~5.1 of \cite{EZ}), and that space is zero by Satz~6.1 of \cite{Skoruppa}.
    Thus $G=0$, and the theorem follows from Theorem~\ref{thm:Alfes-Trace}.
\end{proof}

\subsection{Mock theta coefficients as traces}
\label{sec:mock-theta-traces}

Theorem~\ref{thm:vec-value-coeffs} yields a formula for the mock theta function coefficients of the form
\begin{equation}\label{eq:formula-mobius-tvDr}
    \alpha_{\vartheta}(n) = \tfrac{1}{2}c_n\sum_{v\mid m} \mu(m/v) t_v(D,r)
\end{equation}
where $D$ is a linear function of $n$ and $c_n\in \C$.
We simply need to identify the data $N, \ep, m, D, r$ for each function, which we do case-by-case, using the lemmas in Section~\ref{sec:mock-theta-vectors}.
We work out the details in one case here;
the complete data is collected in Tables~\ref{tab:order-3}, \ref{tab:order-5-1}, and \ref{tab:order-7}.

Suppose that $N=60$.
Then $\Psi(2N) = \{1, 31, 41, 49, 71, 79, 89, 119\}$, and $\psi^{-1}$ maps these elements to the unitary divisors $\{1, 4, 3, 5, 12, 20, 15, 60\}$, respectively. 
By Lemma~\ref{lem-order5}, the principal part of ${H}_{(5,1)}$ equals
\begin{equation} \label{eq:H51-principal-part}
    q^{-1/60}\left(\sum_{r\in \{2, 22, 38, 58\}} \mathfrak e_r - \sum_{r\in \{62, 82, 98, 118\}} \mathfrak e_r \right),
\end{equation}
while, for $m=2$, the quantity in \eqref{eq:match-lift-principal} equals
\begin{equation} \label{eq:principal-part-to-match}
    q^{-1/60} \sum_{r\in \Psi(2N)} \ep(\psi^{-1}(r)) \mathfrak e_{2r}.
\end{equation}
A calculation using $\ep=\ep_2\ep_3\ep_5$ shows that $\ep(\psi^{-1}(r)) = 1$ if and only if $r\in \{1,71,79,89\}$, so \eqref{eq:H51-principal-part} equals \eqref{eq:principal-part-to-match}.
It follows from Theorem~\ref{thm:vec-value-coeffs} that
\[
    \alpha_{f_0}(n) = \tfrac 12\sum_{v\mid 2} \mu(2/v) t_v(4-240n,2).
\]

\begin{table}[h]
\centering
\caption{Order $3$ mock theta functions. All have $m=1$ and $\ep=\ep_3.$ 
}
\label{tab:order-3}
\renewcommand{\arraystretch}{1.2}
\begin{tabular}{|c|l|c|c|c|c|}
\hline
Function & $n$ & $N$  & $c_n$ & $-D$ & $r$  \\ \hline \hline
$f$ & all & $6$   & $1$ &  $24n-1$ & $1$ \\ \hline
$\omega$ & $\text{even}$ & $6$ & $ -\frac14$ &  $12n+8$ & $4$ \\ 
 & $\text{odd}$ &  & $-\frac14$ &   & $2$ \\ \hline
 $\chi$ & $0\pmod 3$  & $18$  & $1$ &  $24n-1$ & 1\\
 & $1\pmod 3$ &   & $1$ &   & 7 \\ 
 & $2\pmod 3$  &   & $-1$ &   & 5 \\ \hline
$\phi$ & $0\pmod 4$ & $24$   & $1$ &  $24n-1$ & $1$ \\ 
& $1\pmod 4$ &    & $-1$ &   & $13$ \\
& $2\pmod 4$ &  & $1$ &   & $7$ \\
& $3\pmod 4$ &  & $1$ &   & $5$ \\ \hline
$\psi$ & all & $24$   & $-1$ &  $96n-4$ & $2$ \\ \hline
\end{tabular}
\end{table}

\begin{table}[h]
\centering
\caption{Order $5$ mock theta functions. The left family has $m=1$ and the right family has $m=2$. All have $N=60$ and $\ep=\ep_2\ep_3\ep_5$.
}
\label{tab:order-5-1}
\renewcommand{\arraystretch}{1.2}
\begin{tabular}{|c|l|c|c|c|}
\hline
Function & $n$      & $c_n$     & $-D$   & $r$  \\ \hline \hline
$\psi_0$ & all      & $-1$       &  $240n-4$          & $2$ \\ \hline
$\psi_1$ & all      & $-1$       &  $240n+44$        & $14$ \\ \hline
$\phi_0$ & even  & $1$       &  $120n-1$ & $1$ \\ 
      & odd         & $-1$      &   & $11$ \\ \hline
$\phi_1$ & even  & $-1$      &  $120n-49$ & $7$ \\ 
      & odd         & $1$       &   & $13$ \\ \hline
\end{tabular}
\qquad
\begin{tabular}{|c|l|c|c|c|}
\hline
Function & $n$      & $c_n$     & $-D$   & $r$  \\ \hline \hline
$f_0$ & all         & $1$       &  $240n-4$          & $2$ \\ \hline
$f_1$ & all         & $1$       &  $240n+44$        & $14$ \\ \hline
$F_0$ & even        & $-\frac 12$ &  $120n-1$ & $1$ \\ 
      & odd         & $-\frac 12$ &   & $11$ \\ \hline
$F_1$ & even        & $-\frac 12$ &  $120n+71$ & $13$ \\ 
      & odd         & $-\frac 12$ &   & $7$ \\ \hline
\end{tabular}
\end{table}

\begin{table}[h]
\centering
\caption{Order $7$ mock theta functions. All have $m=1$, $N=42$, and $\ep=\ep_2\ep_3\ep_7$.
}
\label{tab:order-7}
\renewcommand{\arraystretch}{1.2}
\begin{tabular}{|c|l|c|c|c|}
\hline
Function & $n$          & $c_n$     & $-D$   & $r$  \\ \hline \hline
$\mathcal F_0$ & all    & $1$       &  $168n-1$          & $1$ \\ \hline
$\mathcal F_1$ & all    & $-1$       &  $168n-25$          & $5$ \\ \hline
$\mathcal F_2$ & all    & $-1$       &  $168n+47$          & $11$ \\ \hline
\end{tabular}
\end{table}

\section{From algebraic to transcendental formulas}
\label{sec:transcendental-formulas}

In this section we obtain transcendental formulas from the algebraic formulas of the previous section.
Let $\vartheta$ be one of the mock theta functions in Section~\ref{sec:mock-theta-vectors} and let $N,\ep,m,D,r$ be the corresponding data from Section~\ref{sec:mock-theta-traces}.
By \eqref{eq:psi-cong} we have $\psi(N) = -1$, so the action of $W_N$ on quadratic forms yields the bijection
\begin{equation}
    \mathcal Q_{N,D,r}^+ \xrightarrow{Q \mapsto -(W_N Q)} \mathcal Q_{N,D,r}^-.
\end{equation}
This, together with \eqref{eq:fm-Wq-eig}, shows that
\[
    t_m(D,r) = 2i|D|^{-1/2}\tr_{f_m}^+(D,r)
\]
since $\ep(N)=-1$.

Inserting the definitions \eqref{eq:tr-F-pm-def}, \eqref{eq:Fm-poincare-def}, \eqref{eq:fm-limit-def}, and applying \eqref{eq:zgq=gzq}, we find that
\begin{equation}
    t_m(D,r) 
    = 2i|D|^{-\frac12}\lim_{s\to 1^+} \sum_{u\mid\mid N} \sum_{Q\in \Gamma_0(N) \backslash \mathcal Q_{N,D,r}^+}  \sum_{\gamma \in \Gamma_\infty \backslash \Gamma_0(N)} \frac{\ep(u)}{\omega_Q} \phi_s(m\im \tau_{\gamma W_uQ}) e(-m\re \tau_{\gamma W_uQ}).
\end{equation}
As the pair $(\gamma,Q)$ ranges over $\Gamma_\infty \backslash \Gamma_0(N) \times \Gamma_0(N)\backslash \mathcal Q_{N,D,r}^+$, the product $\gamma W_u Q$, for a fixed $u$, ranges over the subset of $\Gamma_\infty \backslash \mathcal Q_{N,D}^+$ comprising those quadratic forms $Q=[a,b,c]$ with $b\equiv r\psi(u)\pmod{2N}$.
Thus
\begin{equation}
    t_m(D,r) = 2i|D|^{-\frac12}\lim_{s\to 1^+} \sum_{\substack{Q\in \Gamma_\infty \backslash \mathcal Q_{N,D}^+ \\ Q=[a,b,c]}} \sum_{\substack{u\mid\mid N \\ r\psi(u) \equiv b(2N)}} \ep(u) \phi_s(m\im \tau_{Q}) e(-m\re \tau_{Q}).
\end{equation}
The factor of $\omega_Q$ was eliminated because
\[
    \gamma W_u Q = \gamma' W_{u} Q' \iff Q = gQ' \text{ for some }g\in \Gamma_0(N).
\]
Next, because $\ptmatrix 1k01 [a,b,*] = [a,b-2ka,*]$, there is a bijection 
\[
    \Gamma_\infty \backslash \mathcal Q_{N,D}^+ \longleftrightarrow \{(a,b) \in \Z^2 : a>0, N\mid a, 0\leq b < 2a, \  b^2 \equiv D \!\!\! \pmod{4a}\}.
\]
It follows that
\begin{align}
    t_m(D,r) 
    &= 2i|D|^{-\frac12}\lim_{s\to 1^+} \sum_{N|a>0} \phi_s\pfrac{m|D|^{\frac12}}{2a}  \sum_{\substack{b(2a) \\ b^2 \equiv D (4a)}} \sum_{\substack{u\mid\mid N \\ r\psi(u) \equiv b(2N)}} \ep(u) e\pfrac{mb}{2a} \\
    \label{eq:ac-kloos-1}
    &= \frac{2\sqrt 2 \, \pi im^{\frac 12}}{|D|^{\frac14}N^{\frac 12}}\lim_{s\to 1^+} \sum_{c>0} c^{-\frac12} I_{s-\frac 12}\pfrac{\pi m|D|^{\frac12}}{Nc} \sum_{u\mid\mid N} \ep(u) \sum_{\substack{b(2Nc) \\ b\equiv r\psi(u)(2N) \\ b^2 \equiv D (4Nc)}} e\pfrac{mb}{2Nc}.
\end{align}

To emphasize that $D$ depends on $n$, we will sometimes write $D=-d_n$.
Note that this $d_n$ agrees with the quantity of the same name appearing in the introduction.
Define
\begin{equation} \label{eq:Acn-epsilon-def}
    A_c^{(m)}(n|\vartheta) = i\sqrt{\frac{c}{2N}}\sum_{u\mid\mid N} \ep(u) \sum_{\substack{b(2Nc) \\ b\equiv r\psi(u)(2N) \\ b^2 \equiv -d_n (4Nc)}} e\pfrac{mb}{2Nc}.
\end{equation}
We will show in Section~\ref{sec:kloo-simp} that for all $\vartheta$ except for those in the second fifth order family, the sum $A_c^{(1)}(n|\vartheta)$ agrees with the sum $A_c(n|\vartheta)$ appearing in Theorems~\ref{thm:third}, \ref{thm:fifth}, and \ref{thm:seventh}.

\begin{lemma} \label{lem:real}
    The sum $A_c(n|\vartheta)$ is real-valued.
\end{lemma}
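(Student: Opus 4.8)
The plan is to show that the complex conjugate of $A_c(n|\vartheta)$ equals itself by manipulating the defining sum \eqref{eq:Acn-epsilon-def}. The key observation is that the summand $e(mb/2Nc)$ has a real part that is even in $b$ and an imaginary part that is odd in $b$, so the overall reality will hinge on tracking how the constraints on $b$ behave under the involution $b\mapsto -b$.

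First I would write $A_c(n|\vartheta)$ using \eqref{eq:Acn-epsilon-def} and take complex conjugates. Since the prefactor is $i\sqrt{c/2N}$ (purely imaginary up to the real square root) and $\ep(u)\in\{-1,1\}$ is real, conjugation sends $i\mapsto -i$ and fixes everything except the exponential, giving $\overline{e(mb/2Nc)}=e(-mb/2Nc)$. Thus
\[
    \overline{A_c^{(m)}(n|\vartheta)} = -i\sqrt{\frac{c}{2N}}\sum_{u\mid\mid N}\ep(u)\sum_{\substack{b(2Nc)\\ b\equiv r\psi(u)(2N)\\ b^2\equiv -d_n(4Nc)}} e\pfrac{-mb}{2Nc}.
\]
Next I would substitute $b\mapsto -b$ in the inner sum. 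The condition $b^2\equiv -d_n \pmod{4Nc}$ is invariant under $b\mapsto -b$, as is the range $b \bmod 2Nc$. The congruence $b\equiv r\psi(u)\pmod{2N}$ becomes $-b\equiv r\psi(u)$, i.e. $b\equiv -r\psi(u)=r\psi(u)\psi(N)\pmod{2N}$ using $\psi(N)=-1$ from \eqref{eq:psi-cong}. Since $\psi:U_N\to\Psi_{2N}$ is a group isomorphism, $\psi(u)\psi(N)=\psi(u\ast N)$, so reindexing $u\mapsto u\ast N$ converts the constraint back to $b\equiv r\psi(u\ast N)$. Under this reindexing $\ep(u)$ becomes $\ep(u\ast N)=\ep(u)\ep(N)=-\ep(u)$, because $\ep(N)=-1$ (the defining property of the characters in \eqref{eq:N-ep-pairs}). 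The two sign changes — the external $-i$ and the factor $\ep(N)=-1$ — multiply to $+1$, and the $b\mapsto -b$ substitution turns $e(-mb/2Nc)$ back into $e(mb/2Nc)$, recovering exactly the original sum. Hence $\overline{A_c^{(m)}(n|\vartheta)}=A_c^{(m)}(n|\vartheta)$.

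\textbf{The main obstacle} I anticipate is the bookkeeping in the fifth-order case of Theorem~\ref{thm:fifth}, where the character $\chi_5(2,\cdot)$ and the factor $k_n$ are genuinely complex, so reality there is \emph{not} simply the generic argument above. For those functions one must invoke the two facts recorded in the Remark following Theorem~\ref{thm:fifth}: that $\overline{\chi_5(2,\cdot)}=\pfrac{5}{\cdot}\chi_5(2,\cdot)$, and that the values of $b$ appearing satisfy a fixed residue class mod $5$, yielding $\overline{k_n}=\pfrac{5}{b}k_n$ on the support of the sum. The cleanest strategy is therefore to prove reality at the level of the ``raw'' sum $A_c^{(m)}(n|\vartheta)$ in \eqref{eq:Acn-epsilon-def} — which is uniform across all cases since it only uses $\ep(N)=-1$, $\psi(N)=-1$, and the group isomorphism $\psi$ — and then defer the $\chi_5$ and $k_n$ subtleties to Section~\ref{sec:kloo-simp}, where the identification of $A_c^{(1)}(n|\vartheta)$ with the explicit $A_c(n|\vartheta)$ of the theorems is carried out; the character twist is precisely what absorbs the factor $\ep(u)$ into the summand during that simplification. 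I would thus structure the proof to establish reality of \eqref{eq:Acn-epsilon-def} directly, noting that the explicit forms in the theorems inherit reality once the simplification identities are in hand.
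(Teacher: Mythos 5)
Your proposal is correct and matches the paper's own proof: the paper likewise conjugates, substitutes $b\mapsto -b$, reindexes $u\mapsto N\ast u$ using $\psi(N)\equiv -1\pmod{2N}$ and the homomorphism property of $\psi$, and cancels the resulting sign against $\ep(N)=-1$. Your closing remark about deferring the $\chi_5$ and $k_n$ subtleties to the simplification in Section~\ref{sec:kloo-simp} is also consistent with how the paper organizes the argument (via the Remark after Theorem~\ref{thm:fifth}).
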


\begin{proof}
    Replace $u$ by $N\ast u$ and observe that $\psi(N\ast u) \equiv \psi(N)\psi(u) \equiv -\psi(u) \pmod{2N}$. 
    Then the replacement $b\mapsto -b$ yields the relation $\overline{A_c(n|\vartheta)} = A_c(n|\vartheta)$ since $\ep(N)=-1$.
\end{proof}

\subsection{Kloosterman sums for the Weil representation}
The sums $A_c(n|\vartheta)$ are each linear combinations of Kloosterman sums for the Weil representation, which are defined as follows.
With the notation of Section~\ref{sec:weil-rep}, suppose that $k$ satisfies \eqref{eq:sigma-consistency}.
Then for $c \in \Z$, $c\neq 0$, $\frac{m}{2\Delta}\in \Z+q(\alpha)$, and $\frac{n}{2\Delta}\in \Z+q(\beta)$, 
we define
\begin{equation} \label{eq:kloo-weil-def-2}
    S_{\alpha,\beta}(m,n,c) = e^{-\pi i\sgn(c)k/2} \sum_{d(c)^\times} \bar \rho_{\alpha\beta}(\tilde\gamma) e\left(\frac{ma+nd}{2\Delta c}\right).
\end{equation}
Here $\gamma=\ptmatrix a*cd\in \SL_2(\Z)$ is any matrix with bottom row $(c\ d)$ and $\tilde\gamma = (\gamma,\sqrt{c\tau+d})$ is a lift of $\gamma$ to $\Mp_2(\Z)$.
Lemma~1.13 of \cite{bruinier} gives the relation
\begin{equation} \label{eq:kloo-neg-c}
    S_{\alpha,\beta}(m,n,c) = \overline{S_{\alpha,\beta}(m,n,-c)}.
\end{equation}
By Theorem~2.1 of \cite{AAW} we have
\begin{align*}
     A_c^{(m)}(n|\vartheta)
    &= - \sum_{q\mid\mid N} \ep(q) \sum_{u\mid (m,c)} \sqrt{u} \, S_{\frac{\psi(q)m}{2Nu}, -\frac{r}{2N}}\left(\frac{m^2}{u^2},D,\frac cu\right) \\
    &= - \sum_{u\mid(m,c)} \sqrt u \, \mathcal S_{r,\ep}\left(\frac mu,D,\frac cu\right),
\end{align*}
where
\begin{equation}
    \mathcal S_{r,\ep}(u,D,c) = \sum_{q\mid\mid N} \ep(q) S_{\frac{\psi(q)u}{2N}, -\frac{r}{2N}}(u^2,D,c).
\end{equation}
In this notation, we have
\begin{equation}\label{eq:trace-Kloosterman-1}
    t_m(D,r) 
    = -\frac{4\pi}{|D|^{\frac 14}} \lim_{s\to 1^+} \sum_{u\mid m} \sqrt{u} \sum_{c>0} \frac{\mathcal S_{r,\ep}(u,D,c)}{c} I_{s-\frac 12}\pfrac{\pi u |D|^{\frac12}}{Nc}.
\end{equation}

By M\"obius inversion, we find that
\[\mathcal{S}_{r,\ep}(m,D,c) = -\sum_{v|(m,c)} 
\mu(v)\sqrt{v} A^{(m/v)}_{c/v}(n|\vartheta),\]
and so by Lemma~\ref{lem:real}, $\mathcal S_{r,\ep}(u,D,c)$ is real-valued.
It follows from Corollary~\ref{cor:gs} below that
\begin{equation}
    \sum_{c\leq x} \frac{\mathcal{S}_{r,\ep}(u,D,c)}{c} \ll x^{1/2-\delta}
\end{equation}
for some $\delta>0$.
Therefore a straightforward argument (see Section~5 of \cite{AA} for details in a similar case) shows that the series in \eqref{eq:trace-Kloosterman-1} converges uniformly for $s\in [1,2]$, and this is enough to justify the exchange of the limit and sum to get
\begin{equation}
    t_m(D,r) = -\frac{4\pi}{|D|^{\frac 14}} \sum_{u\mid m} \sqrt{u} \sum_{c>0} \frac{\mathcal S_{r,\ep}(u,D,c)}{c} I_{\frac 12}\pfrac{\pi u |D|^{\frac12}}{Nc}.
\end{equation}
This, together with \eqref{eq:formula-mobius-tvDr}, yields the following theorem.

\begin{theorem} \label{thm:table-formulas}
    Let $\vartheta$ be one of the mock theta functions in Section~\ref{sec:mock-theta-vectors}, with corresponding data $N,\ep, m, r, c_n, D=-d_n$ from Section~\ref{sec:mock-theta-traces}.
    Then
    \begin{align}
        \alpha_\vartheta(n) 
        &= \frac{-2\pi c_n \sqrt{m} }{\sqrt[4]{d_n}}  \sum_{c>0} \frac{\mathcal S_{r,\ep}(m,-d_n,c)}{c} I_{\frac 12}\pfrac{\pi m \sqrt{d_n}}{Nc} \\
        &= \frac{2\pi c_n}{\sqrt[4]{d_n}} \sum_{v\mid m}\mu(m/v)\sqrt v \sum_{c>0} \frac{A_c^{(v)}(n|\vartheta)}{c} I_{\frac 12} \pfrac{\pi v \sqrt {d_n}}{Nc}.
    \end{align}
\end{theorem}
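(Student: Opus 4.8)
The plan is to assemble the theorem purely from ingredients already in hand: the M\"obius-weighted trace expression \eqref{eq:formula-mobius-tvDr}, the convergent-series formula for $t_v(D,r)$ displayed just before the statement, and the M\"obius inversion relating $\mathcal S_{r,\ep}$ to the Kloosterman sums $A_c^{(v)}(n|\vartheta)$. No new analysis is needed: the sole infinite summation is the sum over $c$, whose conditional convergence and the legitimacy of exchanging it with $\lim_{s\to 1^+}$ have already been secured via Corollary~\ref{cor:gs}. Every remaining step is finite bookkeeping over the divisors of $m$.

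First I would insert the formula for $t_v(D,r)$ into \eqref{eq:formula-mobius-tvDr} to obtain
\[
    \alpha_\vartheta(n) = -\frac{2\pi c_n}{|D|^{1/4}} \sum_{v\mid m}\mu(m/v)\sum_{u\mid v}\sqrt u \sum_{c>0}\frac{\mathcal S_{r,\ep}(u,D,c)}{c}\,I_{1/2}\pfrac{\pi u|D|^{1/2}}{Nc}.
\]
Interchanging the finite $v$- and $u$-sums, the inner divisor sum becomes $\sum_{u\mid v\mid m}\mu(m/v)$; writing $v=uv'$ this is $\sum_{v'\mid m/u}\mu\big((m/u)/v'\big)$, which vanishes unless $u=m$. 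Only the $u=m$ term survives, and recalling $D=-d_n$ and $|D|=d_n$ gives the first displayed equality.

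For the second equality I would substitute the M\"obius inversion $\mathcal S_{r,\ep}(m,D,c)=-\sum_{v\mid(m,c)}\mu(v)\sqrt v\,A^{(m/v)}_{c/v}(n|\vartheta)$ into this first line, yielding
\[
    \alpha_\vartheta(n) = \frac{2\pi c_n\sqrt m}{\sqrt[4]{d_n}} \sum_{c>0}\sum_{v\mid(m,c)}\frac{\mu(v)\sqrt v}{c}\,A^{(m/v)}_{c/v}(n|\vartheta)\,I_{1/2}\pfrac{\pi m\sqrt{d_n}}{Nc}.
\]
Reindexing by $c=vc'$ turns the constraint $v\mid c$ into the outer condition $v\mid m$ and converts the Bessel argument via $\pi m\sqrt{d_n}/(Nvc')=\pi(m/v)\sqrt{d_n}/(Nc')$; using $\sqrt v/v=1/\sqrt v$ and then relabeling $w=m/v$ (so that $\mu(v)/\sqrt v=\mu(m/w)\sqrt w/\sqrt m$) collapses the factor $\sqrt m$ and produces the stated form.

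The main obstacle, such as it is, is entirely combinatorial: one must carry the weights $\sqrt u$, $\sqrt v$, and the M\"obius factors through two reindexings without error, and in particular without tacitly assuming absolute convergence. This is safe because every divisor sum is finite and each resulting $c$-series is a finite linear combination of the conditionally convergent series already controlled by Corollary~\ref{cor:gs}; hence all interchanges are valid and no genuine analytic difficulty remains.
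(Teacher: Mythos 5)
Your proposal is correct and follows essentially the same route as the paper: the paper likewise derives the theorem by combining \eqref{eq:formula-mobius-tvDr} with the limit-exchanged formula for $t_m(D,r)$ and the M\"obius relation between $\mathcal S_{r,\ep}$ and $A_c^{(v)}(n|\vartheta)$, leaving the divisor-sum collapse and the reindexing $c=vc'$ implicit. Your write-up simply fills in that finite bookkeeping (with the correct observation that the reindexing is harmless because each inner $c$-series converges by Corollary~\ref{cor:gs}), so there is nothing to add.
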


\subsection{Proof of Theorems~\ref{thm:third}, \ref{thm:fifth}, and \ref{thm:seventh}} \label{sec:kloo-simp}

To prove the three theorems in the introduction, it remains to simplify the sums $A_c^{(1)}(n|\vartheta)$ (note that $m=1$ for each of the mock theta functions in those theorems).
We observe that $b$ only contributes to the sum \eqref{eq:Acn-epsilon-def} if $b\in r\Psi(2N)$.
Thus we can write
\[
    A_c^{(1)}(n|\vartheta) = i \sqrt{\frac{c}{2N}} \sum_{\substack{b(2Nc) \\ b\in r\Psi(2N) \\ b^2\equiv -d_n (4Nc)}} \xi(b) e\pfrac{b}{2Nc},
\]
where
\begin{equation}
    \xi(b) = \sum_{\substack{q\mid\mid N \\ r\psi(q) \equiv b (2N)}} \ep(q).
\end{equation}
With the exception of the third order function $\psi(q)$, in each case the condition $b\in r\Psi(2N)$ is redundant because it is implied by the congruence $b^2 \equiv -d_n\pmod{4Nc}$.
For $\psi(q)$ the condition $b\in r\Psi(2N)$ is equivalent to $b\equiv 2,14,34,46\pmod{48}$.

For each of the pairs $(N,\ep)$ in \eqref{eq:N-ep-pairs}, we can compute $\xi(b)$ for each $b\in r\Psi(2N)$ for each $r$ appearing in the corresponding table from Section~\ref{sec:mock-theta-traces} (for now, we are only considering the left family from Table~\ref{tab:order-5-1}).
The results are somewhat cleaner if we compute the product $c_n \xi(b)$ instead. 
For the third order functions we find that
\begingroup
\allowdisplaybreaks
\begin{align}
    \text{if $N=6$ then }& c_n \xi(b) = \pfrac{-3}{b} \times
    \begin{cases}
        1 & \text{ if }r=1, \\
        -\tfrac 12 (-1)^n & \text{ if }r=2,4,
    \end{cases} \\
    \text{if $N=18$ then }& c_n\xi(b) = \pfrac{-3}{b} \text{ if } r=1,\\
    \text{if $N=24$ then }& c_n\xi(b) = \pfrac{-3}{b} \times
    \begin{cases}
        (-1)^n & \text{ if }r=1,5,7,13, \\
        1 & \text{ if }r=2. \\
    \end{cases}
\end{align}
\endgroup
For the fifth order functions, we have $N=60$ and
\begin{equation}
     c_n\xi(b) = \pfrac{3}{b}\chi_{5}(2,b) \times
    \begin{cases} 
        (-1)^n & \text{ if }r=1,11, \\
        -i & \text{ if }r=2, \\
        -i(-1)^n & \text{ if }r=7,13, \\
        1 & \text{ if }r=14. \\
    \end{cases}
\end{equation}
Finally, for the seventh order functions we have $N=42$ and
\begin{align}
     c_n\xi(b) = \pfrac{-21}{b} \times
     \begin{cases}
         1 & \text{ if }r=1, \\
         -1 & \text{ if }r=5,11.
     \end{cases}
\end{align}
The quantities in braces above exactly agree with the factors $k_n$ in Theorems~\ref{thm:third}, \ref{thm:fifth}, and \ref{thm:seventh}.
We complete the proof by combining the contributions from $b$ and $-b$ in the sum to replace the exponential factor with the sine function. \qed

\subsection{The second fifth order family}
The mock theta functions $f_0$, $f_1$, $F_0$, and $F_1$ have $m=2$, so the corresponding transcendental formulas are slightly more complicated than those given in the introduction.
The following theorem can be proved using the same ideas as in the previous subsection.

\begin{theorem} \label{thm:fifth-2}
    Let $\vartheta \in \{f_0, f_1, F_0, F_1\}$ and let $d_n$ and $k_n$ be given by the following table.
    \[
    \renewcommand{\arraystretch}{1.2}
        \begin{tabular}{|c|c|c|c|c|}
            \hline
             & $f_0$ & $f_1$ & $F_0$ & $F_1$ \\
             \hline
            $d_n$ & $240n-4$ & $240n+44$ & $120n-1$ & $120n+71$  \\
            \hline
            $k_n$ & ${i}$ & ${-1}$ & ${-\tfrac 12}$ & ${-\tfrac 12i}$ \\ \hline
        \end{tabular}
    \]
    Then
    \[
        \alpha_\vartheta(n) = \frac{2\pi}{\sqrt[4]{d_n}} \sum_{v\mid 2} \mu(2/v) \sqrt v \sum_{c>0} \frac{A_c^{(v)}(n|\vartheta)}{c} I_{1/2} \pfrac{\pi v\sqrt{d_n}}{60c},
    \]
    where
    \[
        A_c^{(m)}(n|\vartheta) = k_n \sqrt{\frac{c}{30}} \sum_{\substack{b\bmod 60c \\ b^2\equiv -d_n(240c)}} \pfrac{3}{b} \chi_{5}(2,b) \sin\pfrac{-mb}{60c}.
    \]
\end{theorem}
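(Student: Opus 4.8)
The plan is to run the three-step reduction of Section~\ref{sec:kloo-simp} once more, this time with $m=2$, $N=60$, and $\ep=\ep_2\ep_3\ep_5$, and \emph{without} collapsing the M\"obius sum to a single term. First I would apply Theorem~\ref{thm:table-formulas} to $\vartheta\in\{f_0,f_1,F_0,F_1\}$, reading the data $d_n,r,c_n$ off the right-hand family of Table~\ref{tab:order-5-1}; this gives
\[
    \alpha_\vartheta(n) = \frac{2\pi c_n}{\sqrt[4]{d_n}} \sum_{v\mid 2}\mu(2/v)\sqrt v \sum_{c>0} \frac{A_c^{(v)}(n|\vartheta)}{c} I_{1/2}\pfrac{\pi v\sqrt{d_n}}{60c}.
\]
The entire task is then to simplify the sums $A_c^{(v)}(n|\vartheta)$ from \eqref{eq:Acn-epsilon-def} for the two values $v\in\{1,2\}$, and to absorb $c_n$ into the leading constant.

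Exactly as in Section~\ref{sec:kloo-simp}, I would first rewrite
\[
    A_c^{(v)}(n|\vartheta) = i\sqrt{\frac{c}{120}} \sum_{\substack{b(120c)\\ b\in r\Psi(120)\\ b^2\equiv -d_n(240c)}} \xi(b)\,e\pfrac{vb}{120c}, \qquad \xi(b)=\sum_{\substack{q\mid\mid 60\\ r\psi(q)\equiv b(120)}}\ep(q).
\]
Since none of these four functions is the exceptional function $\psi$, the side condition $b\in r\Psi(120)$ is implied by $b^2\equiv -d_n\pmod{240c}$ and can be dropped. The key point is that $\xi(b)$ depends only on $N$, $\ep$, $r$, and $b$; since the set of residues $r$ occurring in the right-hand family of Table~\ref{tab:order-5-1} is exactly the set $\{1,2,7,11,13,14\}$ already treated for the left-hand family, the products $c_n\xi(b)$ are given by the same finite computation that produced the fifth-order display in Section~\ref{sec:kloo-simp}, now with the right-hand values of $c_n$ inserted. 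A direct check then shows that in each of the four cases $c_n\xi(b)=k_n\pfrac{3}{b}\chi_5(2,b)$ with $k_n$ as tabulated.

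Finally I would pair the contributions of $b$ and $-b$ to turn each exponential into a sine. The only $v$-dependence sits in the factor $e\pfrac{vb}{120c}$, so the pairing is insensitive to whether $v=1$ or $v=2$; the congruences defining the summation range do not involve $v$. Using $\pfrac{3}{-b}\chi_5(2,-b)=-\pfrac{3}{b}\chi_5(2,b)$ (the sign coming from $\chi_5(2,-1)=-1$) together with $e\pfrac{vb}{120c}-e\pfrac{-vb}{120c}=2i\sin\pfrac{\pi vb}{60c}$, the leading $i$ produces a factor $-2$, which combines with $\sqrt{c/120}$ to give $\sqrt{c/30}$ once the modulus halves to $60c$, and the residual sign is absorbed into $\sin\pfrac{-\pi vb}{60c}$. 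This yields
\[
    A_c^{(v)}(n|\vartheta)=k_n\sqrt{\frac{c}{30}} \sum_{\substack{b(60c)\\ b^2\equiv -d_n(240c)}} \pfrac{3}{b}\chi_5(2,b)\sin\pfrac{-\pi vb}{60c},
\]
which, with $c_n$ absorbed into $k_n$ as in the introductory theorems, is the asserted formula.

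The argument is routine, and the only steps requiring care are the sign bookkeeping in the passage to the sine---verifying both the character identity above and the exact values of $c_n\xi(b)$, which for $F_0$ and $F_1$ depend on the parity of $n$---and confirming that the $v=2$ exponential pairs in precisely the same way as the $v=1$ one. I do not expect a genuine obstacle here, since every new ingredient relative to Section~\ref{sec:kloo-simp} is a finite check that mirrors the $m=1$ case.
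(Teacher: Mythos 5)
Your proposal is correct and is essentially the paper's own proof: the paper disposes of Theorem~\ref{thm:fifth-2} with a one-line remark that it follows ``using the same ideas as in the previous subsection,'' and your write-up faithfully executes exactly those ideas (Theorem~\ref{thm:table-formulas} with the right-hand data of Table~\ref{tab:order-5-1}, the $\xi(b)$ reduction, the redundancy of $b\in r\Psi(120)$, the identity $c_n\xi(b)=k_n\pfrac{3}{b}\chi_5(2,b)$, and the $b\leftrightarrow-b$ folding, which indeed works uniformly in $v$ since the summand $\pfrac{3}{b}\chi_5(2,b)\sin\pfrac{-\pi v b}{60c}$ is even in $b$ for either $v$). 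Note only that your sine factor $\sin\pfrac{-\pi v b}{60c}$ is the correct one, consistent with Theorems~\ref{thm:third}--\ref{thm:seventh}; the missing $\pi$ in the stated theorem is evidently a typo.
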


\section{The Goldfeld-Sarnak approach to sums of Kloosterman sums}
\label{sec:goldfeld-sarnak}

In this section we give an asymptotic formula for sums of the Kloosterman sums $S_{\alpha,\beta}(m,n,c)$ that generalizes the main result of \cite{GS} (see also \cite{pribitkin}).
The results given here are essentially contained in Appendix~E of \cite{hejhal}, especially pages 700--709.
Our setup matches Version~C of \cite{hejhal} (see the table on page~8) which is discussed in Chapter~9 of that book.
In an effort to keep the present work relatively self-contained, we sketch a proof of the sums of Kloosterman sums result here.

One of the central objects in this section is the weight $k$ hyperbolic Laplacian
\begin{equation} \label{eq:Deltak-def-2}
    \Delta_k = y^2 \left(\frac{\partial^2}{\partial x^2} + \frac{\partial^2}{\partial y^2}\right) - i k y \frac{\partial}{\partial x}.
\end{equation}
Note that this differs from the operator of the same name appearing in Section~\ref{sec:weil-rep}.
The normalization \eqref{eq:Deltak-def-2} is customarily used in the spectral theory of automorphic forms, while the other is used in the theory of harmonic Maass forms.
The operator \eqref{eq:Deltak-def-2} is the only one of the two used in this section, so this should not cause any confusion.

In the notation of Section~\ref{sec:weil-rep}, let $\alpha,\beta\in L'/L$ and let $m,n\in \Z$ with $\frac{m}{2\Delta}\in \Z+q(\alpha)$, $\frac{n}{2\Delta}\in \Z+q(\beta)$.
Since \eqref{eq:sigma-consistency} defines $k$ modulo $2$, we will assume that $k\in \{0, \pm \frac 12, 1\}$.
Define 
\[
    B = \limsup_{c\rightarrow \infty} \frac{\log|S_{\alpha,\beta}(m,n,c)|}{\log c}.
\] 
The following asymptotic formula is the analogue of Theorem~2 of \cite{GS}. 

\begin{theorem} \label{thm:gs}
    There exist exponents $e_1 > e_2 > \ldots > e_\ell\in (0,B]$ (depending only on $L$) and constants $a_1,\ldots,a_\ell\in \C$ (depending on $L, k, \alpha, \beta, m, n$) such that
    \begin{equation}
        \sum_{0 < |c|\leq x} \frac{S_{\alpha,\beta}(m,n,c)}{|c|} = \sum_{j=1}^\ell a_j x^{e_j} + O(x^{B/3+\varepsilon})
    \end{equation}
    for any $\varepsilon>0$.
\end{theorem}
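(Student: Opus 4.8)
The plan is to adapt the method of Goldfeld and Sarnak, realized through the resolvent of the weight $k$ Laplacian $\Delta_k$ acting on $\C[L'/L]$-valued automorphic forms of type $\rho_L$, following the Version~C framework of \cite{hejhal}. First I would attach to the Kloosterman sums the Selberg--Kloosterman zeta function
\[
    Z_{\alpha,\beta}(s) = \sum_{c>0} \frac{S_{\alpha,\beta}(m,n,c)}{c^{2s}},
\]
which converges absolutely for $\re(s) > \tfrac{1+B}2$ by the definition of $B$. Using \eqref{eq:kloo-neg-c} to fold the sum over negative $c$ into a conjugate, the sum in the theorem can be recovered by Mellin inversion: writing $w = 2s-1$,
\[
    \sum_{0<c\le x} \frac{S_{\alpha,\beta}(m,n,c)}{c} = \frac{1}{2\pi i}\int_{(w_0)} Z_{\alpha,\beta}\lrp{\tfrac{1+w}2} \frac{x^w}{w}\,dw \qquad (w_0 > B),
\]
so the asymptotics are governed entirely by the poles of $Z_{\alpha,\beta}$ in the strip $\tfrac12 < \re(s) \le \tfrac{1+B}2$.

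The heart of the argument is the meromorphic continuation of $Z_{\alpha,\beta}(s)$ to the critical line $\re(s)=\tfrac12$. Following \cite{GS}, I would extract $Z_{\alpha,\beta}$, up to elementary Gamma- and Bessel-factors, from the Fourier expansion of the automorphic Green's function for $\Delta_k - s(1-s)$ with the Weil multiplier $\rho_L$, equivalently from the Kloosterman term in the inner product of two vector-valued Poincar\'e series. Expanding this inner product against the $\rho_L$-isotypic spectral decomposition of $L^2$ (the discrete spectrum of Maass forms together with the continuous spectrum of Eisenstein series at the cusps of $\Mp_2(\Z)$) yields the continuation to $\re(s)>\tfrac12$, with simple poles exactly at the points $s = s_j \in (\tfrac12, 1]$ for which $\lambda_j = s_j(1-s_j) < \tfrac14$ is an exceptional eigenvalue of $\Delta_k$ (together with a possible pole at $s=1$ from the residual spectrum when $k=0$). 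The residue at $s_j$ is a bilinear form in the $(\alpha,m)$- and $(\beta,n)$-Fourier coefficients of the associated form, and setting $e_j = 2s_j - 1 \in (0,B]$ produces the main terms $a_j x^{e_j}$; the bound $e_j \le B$ is automatic since all poles lie to the left of the abscissa $\tfrac{1+B}2$. I expect this spectral continuation to be the main obstacle: it requires the full vector-valued spectral theory in half-integral weight with the Weil representation, and in particular a careful analysis showing that the continuous spectrum contributes only a branch locus on $\re(s)=\tfrac12$ and no poles to its right. This is precisely the content of Appendix~E of \cite{hejhal} that we are adapting.

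Finally, to extract the asymptotic I would shift the Mellin contour leftward to a line $\re(w)=\eta$ just to the right of $0$, collecting the residues at $w = e_j$ as the main terms. Bounding the shifted integral requires polynomial control of $Z_{\alpha,\beta}\lrp{\tfrac{1+w}2}$ in $\im(w)$ near the critical line, which I would obtain from the spectral expansion together with a mean-square (density) estimate for the exceptional spectrum and the decay of the relevant Bessel transforms. Truncating the integral at height $T$ and balancing the vertical and horizontal contributions against the Perron truncation error---the latter estimated by the trivial bound $|S_{\alpha,\beta}(m,n,c)| \ll c^{B+\varepsilon}$---gives, upon optimizing $T$, an error of size $x^{B/3+\varepsilon}$. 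This three-way balance is the source of the exponent $B/3$ and mirrors the gain over the trivial exponent $B$ in Theorem~2 of \cite{GS}. Assembling the residues with the optimized error bound then yields the stated formula.
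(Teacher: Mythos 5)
Your proposal takes the same overall route as the paper: the Kloosterman--Selberg zeta function $Z_{m,n}(s)$, its meromorphic continuation via inner products of vector-valued Poincar\'e series in Hejhal's Version~C framework, main terms $a_j x^{2s_j-1}$ coming from residues at the finitely many exceptional eigenvalues $\lambda_j=s_j(1-s_j)<\tfrac14$ of $\Delta_k$, and a truncated Perron formula with a contour shift and an optimization of the truncation height $T$ to produce the exponent $B/3$. One structural difference: for the continuation you lean on the full spectral decomposition, including the continuous (Eisenstein) spectrum, whereas the paper uses the Goldfeld--Sarnak device of writing $e(\cdot)=1+(e(\cdot)-1)$ inside the unfolded inner product, so that $Z_{m,n}(s)$ appears times explicit Gamma factors plus a remainder $R_{m,n}(s)$ that is visibly holomorphic and $O(1)$ in $\sigma>\tfrac12$; the continuation is then inherited from that of the Poincar\'e series itself (quoted from Hejhal). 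Your route can be pushed through, but it carries exactly the continuous-spectrum analysis you identify as the main obstacle, while the paper's trick bypasses it entirely.

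There is, however, one step where your sketch would not deliver the stated result as written: the polynomial control of $Z_{m,n}$ near the critical line. You propose to get it from ``a mean-square (density) estimate for the exceptional spectrum and the decay of the relevant Bessel transforms.'' The exceptional spectrum in $[0,\tfrac14)$ is finite, so a density estimate has no content there, and Bessel-function decay is not the source of the bound. What the argument requires---and what in fact encodes the exponent $B/3$---is the uniform estimate $Z_{m,n}(s)\ll |t|^{1/2}/(\sigma-\tfrac12)$ for $\tfrac12<\sigma\le 1$ and $|t|\ge 1$. The paper obtains this by combining the resolvent norm bound $\lVert(\Delta_k+s(1-s))^{-1}\rVert\le |t|^{-1}(2\sigma-1)^{-1}$ with the recurrence \eqref{eq:poincare-recurrence} to get $\lVert P_{\alpha,m}(\cdot,s,k)\rVert\ll(\sigma-\tfrac12)^{-1}$, then Cauchy--Schwarz on the inner-product identity and Stirling for the Gamma factors; Phragm\'en--Lindel\"of interpolation against boundedness at $\sigma=\tfrac{1+B}{2}+\ep$ then controls the shifted contour. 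The balance $x^{B+\ep}/T \sim T^{1/2+\ep}$, i.e.\ $T=x^{2B/3}$, yields $O(x^{B/3+\ep})$; any exponent weaker than $|t|^{1/2}$ in the critical-strip bound would degrade the error term, so your three-way balance does not close without it. Since you framed the whole argument ``through the resolvent,'' this is likely what you intended, but the mechanism must be stated correctly for the proof to go through.
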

A formula for the constants $a_j$ is given in \eqref{eq:aj-formula} below.

In the special case that $g=\operatorname{rank}(L)$ is odd and $m=m_0v^2$ with $(v,\Delta)=1$ and $(-1)^{(g-1)/2}m_0$ a fundamental discriminant, Theorem 1.1 in \cite{AAW} gives the Weil bound 
\begin{equation} \label{eq:weil-bound}
    S_{\alpha,\beta}(m,n,c) \ll c^{1/2+\ep}.
\end{equation}
This shows in particular that $B\le 1/2$ in these cases.
Furthermore, since $g$ is odd we have $k=\pm \frac{1}{2}$ (see \eqref{eq:sigma-consistency}).
We will show below that $e_j=2s_j-1$, where $s_j(1-s_j) = \lambda_j$ and $\lambda_j < \frac 14$ is the eigenvalue of a Maass cusp form.
For $k=\pm \frac 12$ we have $\lambda_1 \geq \frac 3{16}$ (see \eqref{eq:exceptional-ineq}), so $s_1 \leq \frac 34$ and $e_1 \leq \frac 12$.
The bottom eigenvalue $\lambda_1=\frac{3}{16}$ is present if and only if there is a nonzero holomorphic cusp form in $S_{\pm k}(\rho_L)$.
The formula \eqref{eq:aj-formula} shows that $a_j$ is proportional to the product of the $m$-th and $n$-th Fourier coefficients of the corresponding Maass cusp form.
Therefore, if $m$ and $n$ have opposite sign, we have $a_1=0$ if $e_1 = \frac 12$.
The following corollary (which also uses \eqref{eq:kloo-neg-c}) summarizes these results.

\begin{corollary} \label{cor:gs}
    Suppose that $g=\operatorname{rank}(L)$ is odd and that $m=m_0v^2$ with $(v,\Delta)=1$ and $(-1)^{(g-1)/2}m_0$ a fundamental discriminant.
    Suppose that $mn<0$. Then there exists a $\delta>0$ (depending only on $L$) such that
    \begin{equation}
        \sum_{c\leq x} \frac{\re S_{\alpha,\beta}(m,n,c)}{c} \ll x^{1/2-\delta}.
    \end{equation}
    The implied constant may depend on $\alpha,\beta,m,n$.
\end{corollary}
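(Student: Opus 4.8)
The plan is to convert the one-sided real-part sum into the two-sided sum governed by Theorem~\ref{thm:gs}, and then read off the required saving from the spectral data already assembled above. First I would invoke the symmetry \eqref{eq:kloo-neg-c}, which gives $S_{\alpha,\beta}(m,n,-c)=\overline{S_{\alpha,\beta}(m,n,c)}$; pairing the $c$ and $-c$ terms then yields
\[
    \sum_{0<|c|\leq x} \frac{S_{\alpha,\beta}(m,n,c)}{|c|} = 2\sum_{0<c\leq x} \frac{\re S_{\alpha,\beta}(m,n,c)}{c},
\]
so it suffices to bound the left-hand side, to which Theorem~\ref{thm:gs} applies verbatim.

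Next I would dispose of the error term. Under the hypotheses here ($g$ odd, $m=m_0v^2$ with $(v,\Delta)=1$ and $(-1)^{(g-1)/2}m_0$ a fundamental discriminant), the Weil bound \eqref{eq:weil-bound} of \cite{AAW} gives $B\leq\tfrac12$, so the error $O(x^{B/3+\ep})$ in Theorem~\ref{thm:gs} is $O(x^{1/6+\ep})$, comfortably inside the target. What remains is the main sum $\sum_{j=1}^\ell a_j x^{e_j}$. Since $g$ is odd we have $k=\pm\tfrac12$, and the discussion preceding the corollary records that $e_j=2s_j-1$ with $s_j(1-s_j)=\lambda_j$ and $\lambda_1\geq\tfrac3{16}$; together with the ordering $e_1>e_2>\cdots$ this forces $e_j\leq e_1\leq\tfrac12$. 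If $e_1<\tfrac12$, then every term is $\ll x^{e_1}$ and any $\delta<\tfrac12-e_1$ works.

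The hard part is the boundary case $e_1=\tfrac12$, where a crude bound yields only $x^{1/2}$ with no saving. In this case $\lambda_1=\tfrac3{16}$, which forces the associated Maass cusp form to arise from a holomorphic form in $S_{\pm k}(\rho_L)$; by \eqref{eq:aj-formula} the leading coefficient $a_1$ is proportional to the product of the $m$-th and $n$-th Fourier coefficients of that form. Because a holomorphic form has Fourier support on only one side, the hypothesis $mn<0$ forces one of these two coefficients to vanish, so $a_1=0$. The main sum then collapses to $\ll x^{e_2}$ with $e_2<\tfrac12$, and we take any $\delta<\tfrac12-e_2$. In every case the main sum is $\ll x^{1/2-\delta}$, which combined with the error term gives the corollary. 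The delicate point is thus not the size of $a_1$ but its exact vanishing under $mn<0$: verifying that the bottom-of-spectrum form is genuinely the image of a holomorphic cusp form, so that its wrong-sign coefficients are identically zero, is precisely what the preceding analysis of \eqref{eq:aj-formula} supplies, and everything else is routine bookkeeping.
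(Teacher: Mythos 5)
Your proposal is correct and follows essentially the same route as the paper: the pairing of $\pm c$ via \eqref{eq:kloo-neg-c}, the Weil bound \eqref{eq:weil-bound} to get $B\le\tfrac12$ and control the error term, the spectral gap $\lambda_1\ge\tfrac{3}{16}$ for $k=\pm\tfrac12$ forcing $e_1\le\tfrac12$, and the vanishing $a_1=0$ in the boundary case $e_1=\tfrac12$ because \eqref{eq:aj-formula} involves the product $\bar c_{\alpha\ell}(\tilde m)c_{\beta\ell}(\tilde n)$ and the bottom-of-spectrum eigenfunctions \eqref{eq:bottom-spectrum} come from holomorphic cusp forms, whose one-sided Fourier support kills one factor when $mn<0$. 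This is exactly the argument the paper gives in the discussion preceding the corollary.
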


The proof of Theorem~\ref{thm:gs} relies on an estimate for the Kloosterman-Selberg zeta function 
\begin{equation}
    Z_{m,n}(s) = Z_{m,n}^{(\alpha,\beta)}(s) = \tfrac 12\sum_{c\neq 0} \frac{S_{\alpha,\beta}(m,n,c)}{|c|^{2s}}
\end{equation}
in the critical strip $\frac 12 < \sigma < 1$ as $t\to\infty$, where $s=\sigma+it$.
This bound is obtained by applying the spectral theory of $\Delta_k$
to the Poincar\'e series
\begin{equation} \label{eq:poincare-def}
    P_{\alpha,m}(\tau,s,k) = \tfrac 12 \sum_{\substack{\mathfrak g\in \widetilde{\Gamma}_\infty \backslash \tilde\Gamma \\ \mathfrak g = (\gamma,\phi)}} j(\mathfrak g, \tau)^{-2k} \rho(\mathfrak g)^{-1} e(\tilde m \gamma \tau) \im(\gamma\tau)^s\mathfrak{e}_\alpha,
\end{equation}
where $\tilde \Gamma = \Mp_2(\Z)$, $\tilde \Gamma_\infty = \langle T,Z^2 \rangle$, $j(\mathfrak g,\tau) = \phi(\tau)/|\phi(\tau)|$, and $\tilde m = \frac m{2\Delta}$.
This is a vector-valued analogue of the series studied by Selberg in \cite{selberg}; it satisfies the following properties.

\begin{proposition}
    The series \eqref{eq:poincare-def} converges absolutely and uniformly in $\re(s)>1$ and satisfies the recurrence relation
    \begin{equation} \label{eq:poincare-recurrence}
        (\Delta_k + s(1-s)) P_{\alpha,m}(\tau,s,k) = -4\pi \tilde m (s-\tfrac k2) P_{\alpha,m}(\tau,s+1,k).
    \end{equation}
    Furthermore, the function $P_{\alpha,m}(\tau,s,k)$ has a meromorphic continuation to $\re(s)>\frac 12$ with at most simple poles in the segment $(\frac 12, 1)$ on the real line.
\end{proposition}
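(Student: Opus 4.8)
The plan is to dispatch convergence and the recurrence by direct estimates and computation, reserving the real effort for the continuation. For convergence I would bound the series termwise: since $j(\mathfrak g,\tau)=\phi(\tau)/|\phi(\tau)|$ has modulus $1$ and $\rho_L$ is unitary, the $\mathfrak g$-th summand has norm $|e(\tilde m\gamma\tau)|\,\im(\gamma\tau)^{\re(s)}$, and because $\im(\gamma\tau)=y/|c\tau+d|^2$ is bounded on compact subsets of $\H$ the factor $|e(\tilde m\gamma\tau)|=e^{-2\pi\tilde m\im(\gamma\tau)}$ is bounded there uniformly in $\mathfrak g$. Thus the series is dominated by a constant times the Eisenstein majorant $\sum_{\gamma\in\Gamma_\infty\backslash\SLtZ}\im(\gamma\tau)^{\re(s)}$ (the metaplectic cover contributing only bounded multiplicity), which converges absolutely and uniformly on compacta for $\re(s)>1$.

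For the recurrence I would use that $\Delta_k$ in the normalization \eqref{eq:Deltak-def-2} commutes with the weight-$k$ slash action and acts componentwise on $\C[L'/L]$, so---since $\rho_L(\mathfrak g)^{-1}\mathfrak e_\alpha$ is a constant vector---it may be applied termwise (the differentiated series being again of Poincar\'e type, hence convergent for $\re(s)>1$). This reduces \eqref{eq:poincare-recurrence} to a one-line computation on the seed $\varphi_s(\tau)=e(\tilde m\tau)\,y^{s}$: one finds $(\Delta_k+s(1-s))\varphi_s=-4\pi\tilde m(s-\tfrac k2)\,e(\tilde m\tau)y^{s+1}$, the $y^s$ contributions cancelling because $s(s-1)+s(1-s)=0$. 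Restoring the automorphy factors identifies the right-hand side with $-4\pi\tilde m(s-\tfrac k2)P_{\alpha,m}(\tau,s+1,k)$.

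The continuation is the heart of the matter, and I would drive it with the recurrence rather than continuing the defining sum. Reading \eqref{eq:poincare-recurrence} as
\begin{equation*}
    P_{\alpha,m}(\tau,s,k)=\frac{-1}{4\pi\tilde m(s-\tfrac k2)}\big(\Delta_k+s(1-s)\big)^{-1}P_{\alpha,m}(\tau,s+1,k),
\end{equation*}
I note that for $\re(s)>\tfrac12$ the shift satisfies $\re(s+1)>\tfrac32$, so $P_{\alpha,m}(\cdot,s+1,k)$ is still given by its absolutely convergent expansion. Separating at the cusp the finitely many explicit Fourier modes carried by the seed (elementary Whittaker terms, continued by inspection) from a square-integrable remainder, I would apply the resolvent $(\Delta_k+s(1-s))^{-1}$, defined for $\re(s)>1$ where $s(1-s)$ avoids the spectrum, and continue it meromorphically to $\re(s)>\tfrac12$ via the analytic continuation of its Green's kernel together with the spectral resolution of $\Delta_k$, exactly as in Appendix~E and Chapter~9 of \cite{hejhal}. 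Its only poles in $(\tfrac12,1)$ sit at $s=s_j$ with $s_j(1-s_j)=\lambda_j<\tfrac14$ an exceptional eigenvalue, and they are simple; composing with the holomorphic prefactor (note $k/2\le\tfrac12$ for the relevant $k$) continues $P_{\alpha,m}$ to $\re(s)>\tfrac12$ with at most simple poles in $(\tfrac12,1)$.

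The main obstacle is precisely this spectral input in the vector-valued, weight-$k$ setting attached to $\rho_L$: establishing the meromorphic continuation of the resolvent with simple poles, and controlling the growth of $P_{\alpha,m}(\cdot,s+1,k)$ at the cusp---where the seed need not be square-integrable when $\tilde m<0$---so that the explicit-part/$L^2$-remainder splitting is legitimate and the continued resolvent genuinely applies. The metaplectic and vector-valued decorations are largely bookkeeping, since $\rho_L$ is finite-dimensional and unitary and $\SLtZ$ has a single cusp; the delicate point is confining the continuous-spectrum contribution's singularities to the line $\re(s)=\tfrac12$, so that only the exceptional cuspidal eigenvalues survive in $(\tfrac12,1)$, and this is where I would rely most heavily on the cited pages of \cite{hejhal}.
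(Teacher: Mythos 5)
Your proposal is correct and takes essentially the same route as the paper: the paper's proof is simply a pointer to Chapter~9 of \cite{hejhal}, and the three ingredients you supply---the Eisenstein majorant for absolute convergence, the termwise seed computation for \eqref{eq:poincare-recurrence}, and the continuation of $P_{\alpha,m}(\cdot,s,k)$ by applying the meromorphically continued resolvent to $P_{\alpha,m}(\cdot,s+1,k)$, with simple poles only at the exceptional $s_j\in(\tfrac12,1)$---are exactly the argument behind those citations, including the same reliance on \cite{hejhal} for the spectral machinery and for the non-$L^2$ (growing seed) case. One harmless algebra slip: inverting \eqref{eq:poincare-recurrence} gives $P_{\alpha,m}(\tau,s,k)=-4\pi\tilde m\,(s-\tfrac k2)\,\bigl(\Delta_k+s(1-s)\bigr)^{-1}P_{\alpha,m}(\tau,s+1,k)$, so the factor $-4\pi\tilde m(s-\tfrac k2)$ multiplies the resolvent rather than divides it; this changes nothing in your argument, since either way the prefactor is entire in $\re(s)>\tfrac12$ and all poles come from the resolvent.
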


\begin{proof}
    These facts can all be found in Chapter~9 of \cite{hejhal}.
    The series \eqref{eq:poincare-def} is (7.3) on page 415, see also (7.6) on page 416 and pages 355--356.  
    The recurrence relation \eqref{eq:poincare-recurrence} is a straightforward calculation.
    For the meromorphic continuation and simple poles, see the discussion at the bottom of page 414 together with (6.11) on page 321.
\end{proof}

Let $\mathcal L_k(\rho_L)$ denote the Hilbert space of functions $f = \sum_{\alpha} f_\alpha \mathfrak e_\alpha:\H\to \C[L'/L]$ satisfying $\langle f,f \rangle < \infty$ and $f(\gamma \tau) = j(\mathfrak g,\tau)^{2k} \rho_L(\mathfrak g) f(\tau)$ for all $\mathfrak g = (\gamma,\phi) \in \Mp_2(\Z)$.
Here $\langle \cdot, \cdot \rangle$ denotes the Petersson inner product
\begin{equation}
    \langle f, g \rangle = \int_{\mathcal D} (f(\tau), g(\tau)) \frac{dxdy}{y^2}, \qquad (f,g) = \sum_{\alpha\in L'/L} f_\alpha \bar g_\alpha,
\end{equation}
where $\mathcal D$ is a fundamental domain for $\SL_2(\Z) \backslash \H$.
The Poincar\'e series \eqref{eq:poincare-def} is in $\mathcal L_k(\rho_L)$ when $m>0$ (see Proposition~3 on page 668 of \cite{hejhal}).
The spectrum of $\Delta_k$ on $\mathcal L_k(\rho_L)$ is described in detail in Chapter~9 of \cite{hejhal}; see page 414, together with page 291 and pages 316--318 (especially (5.7)).
We collect the relevant facts here.

The spectrum of $\Delta_k$ is real and nonnegative.
In $[0,\frac 14)$ there are finitely many eigenvalues
\begin{equation} \label{eq:exceptional-ineq}
    \tfrac{|k|}{2}\left(1 - \tfrac{|k|}{2}\right) \leq \lambda_1 \leq \cdots \leq \lambda_p,
\end{equation}
corresponding to eigenfunctions $u_j \in \mathcal L_k(\rho_L)$, where
\[
    (\Delta_k + \lambda_j) u_j = 0.
\]
We may assume that the eigenfunctions $u_j$ form an orthonormal set, and for each $\lambda$, let
\[
    \mathcal C(\lambda) = \{u_j : (\Delta_k + \lambda) u_j = 0\}.
\]
Each $u_j$ has a Fourier expansion of the form
\begin{equation} \label{eq:uj-fourier-exp}
    u_j(\tau) = \sum_{\substack{\alpha\in L'/L \\ q(\alpha) \in \Z}} c_{\alpha j}(0) y^{1 - s_j}\e_\alpha + \sum_{\alpha\in L'/L} \sum_{\substack{n\in \Z+q(\alpha) \\ \tilde n\neq 0}} c_{\alpha j}(\tilde n) W_{\frac k2\sgn(\tilde n),s_j-\frac 12} (4\pi|\tilde n|y) e(\tilde nx) \e_\alpha,
\end{equation}
where $\lambda_j = s_j(1-s_j)$ and $\re(s_j)>\frac 12$.
The bottom eigenvalue $\lambda(k) = \frac{|k|}{2}(1-\frac{|k|}{2})$ appears if and only if there exists a holomorphic cusp form $F$ (or the constant function $1$ if $k=0$) of weight $|k|$ for $\rho_L$.
In that case, we have
\begin{equation} \label{eq:bottom-spectrum}
    u_1(\tau) = \frac{\tilde u_1(\tau)}{\lVert \tilde u_1(\tau) \rVert}, \qquad \tilde u_1(\tau) =  
    \begin{cases}
        y^{k/2} F(\tau) & \text{ if }k\geq 0, \\
        y^{-k/2} \bar F(\tau) & \text { if }k < 0.
    \end{cases}
\end{equation}

\begin{lemma} \label{lem:residue-poincare}
    If $m>0$ then
    \begin{equation}
        \res_{s=s_j} P_{\alpha,m}(\tau,s,k) = (4\pi)^{1-s_j} \frac{\Gamma(2s_j-1)}{\Gamma(s_j - \frac k2)} \sum_{u_\ell \in \mathcal C(\lambda_j)} \tilde m^{1-s_j} \bar c_{\alpha \ell}(\tilde m) u_\ell.
    \end{equation}
\end{lemma}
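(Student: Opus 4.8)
The plan is to invert the recurrence relation \eqref{eq:poincare-recurrence} using the spectral decomposition of $\Delta_k$ on $\mathcal L_k(\rho_L)$. Since $m>0$, the shifted series $P_{\alpha,m}(\tau,s+1,k)$ is square-integrable and holomorphic near $s=s_j$ (as $\re(s_j+1)>1$), so \eqref{eq:poincare-recurrence} gives, first for $\re(s)$ large and then by the meromorphic continuation established above,
\[
    P_{\alpha,m}(\tau,s,k) = -4\pi\tilde m\big(s-\tfrac k2\big)\,(\Delta_k+s(1-s))^{-1}P_{\alpha,m}(\tau,s+1,k).
\]
Expanding $P_{\alpha,m}(\tau,s+1,k)$ against the spectral resolution and using $(\Delta_k+s(1-s))u_\ell=(s(1-s)-\lambda_\ell)u_\ell$ together with the factorization $s(1-s)-\lambda_\ell=(s-s_\ell)(1-s-s_\ell)$, the discrete part contributes
\[
    -4\pi\tilde m\big(s-\tfrac k2\big)\sum_{\ell}\frac{\langle P_{\alpha,m}(\cdot,s+1,k),u_\ell\rangle}{(s-s_\ell)(1-s-s_\ell)}\,u_\ell.
\]
The continuous spectrum is supported on $\re(s)=\tfrac12$ and produces no pole in the open segment $(\tfrac12,1)$, so the only pole at $s=s_j$ comes from those $u_\ell\in\mathcal C(\lambda_j)$, contributing the local factor $\tfrac1{1-2s_j}$.

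Next I would evaluate $\langle P_{\alpha,m}(\cdot,s_j+1,k),u_\ell\rangle$ by the standard unfolding of the Poincar\'e series against the $\tilde\Gamma$-automorphic form $u_\ell$: its transformation law cancels the factor $j(\mathfrak g,\tau)^{-2k}\rho(\mathfrak g)^{-1}$, while the $\tfrac12$ together with $\tilde\Gamma_\infty=\langle T,Z^2\rangle$ reduces the integral to the strip $0\le x<1$, $y>0$. The $x$-integration isolates the $\e_\alpha$-component Fourier coefficient of $u_\ell$ at frequency $\tilde m$ in the expansion \eqref{eq:uj-fourier-exp}; since square-integrability forces $\tilde m>0$, the substitution $u=4\pi\tilde m y$ yields
\[
    \langle P_{\alpha,m}(\cdot,s_j+1,k),u_\ell\rangle = \overline{c_{\alpha\ell}(\tilde m)}\,(4\pi\tilde m)^{-s_j}\int_0^\infty e^{-u/2}u^{s_j-1}W_{\frac k2,\,s_j-\frac12}(u)\,du,
\]
where I use that the Whittaker function is real for the real parameters occurring here.

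The remaining analytic ingredient is the Mellin transform of the Whittaker function at the special point where the confluent hypergeometric factor collapses to $1$; with $\rho=s_j$, $\mu=s_j-\tfrac12$, and $\kappa=\tfrac k2$ the standard formula (see Chapter~13 of \cite{dlmf}) gives
\[
    \int_0^\infty e^{-u/2}u^{s_j-1}W_{\frac k2,\,s_j-\frac12}(u)\,du = \frac{\Gamma(2s_j)}{\Gamma(s_j-\frac k2+1)}.
\]
Substituting this into the residue, the factor $s_j-\tfrac k2$ cancels against $\Gamma(s_j-\tfrac k2+1)=(s_j-\tfrac k2)\Gamma(s_j-\tfrac k2)$, while $\tfrac1{1-2s_j}\Gamma(2s_j)=-\Gamma(2s_j-1)$ via $\Gamma(2s_j)=(2s_j-1)\Gamma(2s_j-1)$; collecting these two sign changes with $-4\pi\tilde m\cdot(4\pi\tilde m)^{-s_j}=-(4\pi)^{1-s_j}\tilde m^{1-s_j}$ produces exactly the claimed formula. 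I expect the genuine obstacle to lie not in this routine Gamma-function bookkeeping but in rigorously justifying the first paragraph---confirming that the meromorphic continuation of $P_{\alpha,m}$ is governed term-by-term by the resolvent expansion and that neither the continuous spectrum nor the residual Eisenstein contributions introduce poles on $(\tfrac12,1)$. These spectral-theoretic points rest on the analysis of Chapter~9 of \cite{hejhal}, which I would cite rather than reprove; the analytic heart specific to this lemma is the unfolding and the Whittaker integral above.
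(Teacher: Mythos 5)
Your argument is correct and reaches the stated formula with accurate Gamma-function bookkeeping, but it is organized differently from the paper's proof, and the difference is worth recording. The paper never invokes the full spectral resolution: it observes that $g:=\res_{s=s_j}P_{\alpha,m}(\tau,s,k)$ satisfies $(\Delta_k+\lambda_j)g=0$ directly from \eqref{eq:poincare-recurrence} (because $P_{\alpha,m}(\tau,s+1,k)$ is analytic at $s=s_j$), so $g$ lies in the span of $\mathcal C(\lambda_j)$; it then computes $\langle P_{\alpha,m}(\cdot,s,k),u_\ell\rangle$ by unfolding, obtaining via \cite[(7.621.11)]{GR} the expression $(4\pi\tilde m)^{1-s}\,\bar c_{\alpha\ell}(\tilde m)\,\Gamma(s-s_j)\Gamma(s+s_j-1)/\Gamma(s-\tfrac k2)$, so the pole at $s=s_j$ is carried by the factor $\Gamma(s-s_j)$ and the residue is read off. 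You instead invert the recurrence through the resolvent and expand $R_{s(1-s)}P_{\alpha,m}(\cdot,s+1,k)$ spectrally, so the pole is carried by the denominator $(s-s_j)(1-s-s_j)$ and the inner product is evaluated at the single point $s_j+1$, where unfolding needs no continuation. What each approach buys: your inner-product evaluation is unconditional (absolute convergence at $s_j+1$), but you must justify the spectral expansion of the resolvent and rule out poles in $(\tfrac12,1)$ coming from the continuous spectrum and from eigenvalues $\lambda_\ell\geq\tfrac14$ (whose $s_\ell$ lie on $\re s=\tfrac12$) --- precisely the point you flag and defer to \cite{hejhal}; the paper's projection argument touches only one finite-dimensional eigenspace, so the continuous spectrum never enters, at the cost of treating the unfolded inner product as a meromorphic function of $s$ and continuing it. Both proofs terminate in the same Whittaker--Mellin integral, yours with parameters $(\nu,\mu,\kappa)=(s_j,\,s_j-\tfrac12,\,\tfrac k2)$ and the paper's with $\nu=s-1$, and your cancellation of $s_j-\tfrac k2$ against $\Gamma(s_j-\tfrac k2+1)$ and of $1-2s_j$ against $\Gamma(2s_j)$ checks out exactly. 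One small correction: it is the hypothesis $m>0$, not square-integrability of $u_\ell$, that makes the relevant frequency $\tilde m$ positive and hence forces the Whittaker index in \eqref{eq:uj-fourier-exp} to be $+\tfrac k2$.
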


\begin{proof}
Let $g(\tau) = \res_{s=s_j} P_{\alpha,m}(\tau,s,k)$. Then $g\in \mathcal L_k(\rho_L)$ and \eqref{eq:poincare-recurrence} gives
    \begin{align*}
        (\Delta_k + \lambda_j)g(\tau) &= \lim_{s\to s_j} (s-s_j)(\Delta_k + s_j(1-s_j)) P_{\alpha,m}(\tau,s,k) \\
        & = -4\pi \tilde m \lim_{s\to s_j} (s-s_j)(s-k/2) P_{\alpha,m}(\tau,s+1,k) = 0
    \end{align*}
because $P_{\alpha,m}(\tau,s+1,k)$ is analytic at $s=s_j$.
Thus $g$ is an eigenfunction of $\Delta_k$ with eigenvalue $\lambda_j$.
It follows that
    \[
        g = \sum_{u_\ell\in \mathcal C(\lambda_j)} \langle g, u_\ell \rangle u_\ell = \res_{s=s_j} \sum_{u_\ell\in \mathcal C(\lambda_j)} \langle P_{\alpha,m}(\cdot,s,k), u_\ell \rangle u_\ell.
    \]
In order to compute the inner product in the expression above, we apply a standard unfolding argument: 
if $\re(s)$ is sufficiently large then
\begin{align}
    \langle P_{\alpha,m}(\cdot,s,k), u_\ell \rangle 
    &= \tfrac 12\sum_{\substack{\mathfrak g \in \tilde \Gamma_\infty \backslash \tilde \Gamma \\ \mathfrak g = (\gamma,\phi)}} \int_{\mathcal D} j(\mathfrak g,\tau)^{-2k} e(\tilde m \gamma\tau) \im(\gamma\tau)^s (\rho_L(\mathfrak g)^{-1} \e_\alpha,u_\ell(\tau)) \, \frac{dx dy}{y^2} \\
    &= \tfrac 12\sum_{\mathfrak g \in \tilde \Gamma_\infty \backslash \tilde \Gamma} \int_{\gamma\mathcal D} e(\tilde m \tau) y^s (\e_\alpha, j(\mathfrak g,\gamma^{-1}\tau)^{2k} \rho_L(\mathfrak g) u_\ell(\gamma^{-1}\tau)) \, \frac{dx dy}{y^2},
\end{align}
where we have made the change of variable $\tau\mapsto \gamma^{-1}\tau$ and we have used that $\rho_L$ is unitary.
Since $j(\mathfrak g,\gamma^{-1}\tau)^{2k} \rho_L(\mathfrak g)u_\ell(\gamma^{-1}\tau) = u_\ell(\tau)$, we have
\begin{align} 
   \langle P_{\alpha,m}(\cdot,s,k), u_\ell \rangle
    &= \tfrac 12\sum_{\mathfrak g \in \tilde \Gamma_\infty \backslash \tilde 
 \Gamma} \int_{\gamma\mathcal D} e(\tilde m \tau) y^s (\e_\alpha, u_\ell(\tau)) \, \frac{dx dy}{y^2} \\
    &= \int_{0}^\infty y^{s-2} e^{-2\pi \tilde m y}  \int_0^1 (e(\tilde m x)\e_\alpha,u_\ell(\tau))\, dx \,dy. \label{eq:poincare-inner-prod-integral}
\end{align}
The inner integral equals the complex conjugate of the $(\tilde m, \alpha)$ Fourier coefficient of $u_\ell$, including the Whittaker factor.
By \cite[(7.621.11)]{GR} we obtain
\begin{align} 
    \langle P_{\alpha,m}(\cdot,s,k), u_{\ell} \rangle
    &= \overline{c}_{\alpha \ell}(\tilde m) \int_0^\infty y^{s-2} e^{-2\pi \tilde m y} W_{\frac k2, s_j - \frac 12}(4\pi \tilde my) \, dy \\
    &= (4\pi\tilde m)^{1-s} \overline{c}_{\alpha \ell}(\tilde m) \frac{\Gamma(s-s_j)\Gamma(s+s_j-1)}{\Gamma(s-\frac k2)}. \label{eq:poincare-inner-prod-gammas}
\end{align}
It follows that
\begin{equation}
    g = (4\pi)^{1-s_j} \frac{\Gamma(2s_j-1)}{\Gamma(s_j-\frac k2)} \sum_{u_\ell \in \mathcal C(\lambda_j)} {\tilde m}^{1-s_j}  \overline{c}_{\alpha \ell}(\tilde m) u_\ell,
\end{equation}
and this completes the proof.
\end{proof}

We have the following representation of the Kloosterman-Selberg zeta function in terms of the inner product of two Poincar\'e series. Let $P^*_{\beta,n}(\tau,s,k)$ denote the sum \eqref{eq:poincare-def} with $\rho_L$ replaced by the dual representation $\rho_L^*$ (which is simply the complex conjugate if we think of $\rho_L(\mathfrak g)$ as a matrix with entries in $\C$).

\begin{lemma} \label{lem:poincare-inner-prod-opp-sign}
    Suppose that $m>0$ and $n<0$, with $\tilde m = \frac{m}{2\Delta}\in \Z+q(\alpha)$ and $\tilde n = \frac{n}{2\Delta} \in \Z+q(\beta)$.
    Then for $\sigma=\re s>1$ we have 
    \begin{equation*}
    \langle P_{\alpha,m}(\cdot,s,k), \overline{P^*_{\beta,-n}(\cdot,s+2,-k)}\rangle   
    =  \frac{\Gamma(2s+1)}{4^{s+1}\pi|\tilde n|^2 \Gamma(s-\frac k2)\Gamma(s+2+\frac k2)} Z_{m,n}(s) +R_{m,n}(s),
    \end{equation*}
where $R_{m,n}(s)$ is holomorphic and $O_{m,n}(1)$ in the larger region $\sigma > \frac 12$.
In particular, $Z_{m,n}(s)$ has a meromorphic continuation to $\sigma > \frac 12$.
\end{lemma}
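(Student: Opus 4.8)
The plan is to unfold the inner product against the series $P_{\alpha,m}(\cdot,s,k)$ carrying the smaller spectral parameter and then to insert the Fourier expansion of the shifted series $P^*_{\beta,-n}(\cdot,s+2,-k)$, so that the Kloosterman sums emerge from the Bruhat cells $c\neq 0$. First I would work in $\sigma>1$, where both series in \eqref{eq:poincare-def} converge absolutely. Conjugation turns $P^*_{\beta,-n}(\cdot,s+2,-k)$, which has weight $-k$ and type $\rho_L^*$, into a form of weight $k$ and type $\rho_L$, so the pairing $(P_{\alpha,m},\overline{P^*_{\beta,-n}})$ is $\SL_2(\Z)$-invariant; unitarity of $\rho_L$ and the usual unfolding then give
\[
\langle P_{\alpha,m}(\cdot,s,k),\overline{P^*_{\beta,-n}(\cdot,s+2,-k)}\rangle
= \int_0^\infty\!\!\int_0^1 e(\tilde m\tau)\,y^{s}\,\bigl(P^*_{\beta,-n}(\tau,s+2,-k)\bigr)_\alpha\,\frac{dx\,dy}{y^2}.
\]

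The inner integral over $x$ extracts the Fourier coefficient of the $\e_\alpha$-component of $P^*_{\beta,-n}(\cdot,s+2,-k)$ at frequency $-\tilde m$. Since $m>0$ and $-n>0$, the identity coset ($c=0$), which contributes only to the $\e_\beta$-component at the positive frequency $|\tilde n|$, does not match this frequency and drops out, leaving the cells with $c\neq 0$. Organizing these by the lower-left entry $c$ and summing over $d\bmod c$ produces exactly the Kloosterman sum $S_{\alpha,\beta}(m,n,c)$ of \eqref{eq:kloo-weil-def-2}: the factor $\bar\rho_{\alpha\beta}$ there is matched by the conjugate dual representation in $P^*$, and the index $n$ is correct because $-n$ is the seed index of $P^*$. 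Combining the cells $c>0$ and $c<0$ via \eqref{eq:kloo-neg-c} turns $\sum_{c>0}$ into $\tfrac12\sum_{c\neq 0}$ over $|c|$, and leaves, for each $c$, a $y$-integral of the seed $e^{-2\pi\tilde m y}y^{s-2}$ against the Whittaker factor of the cell, whose argument depends on $c$ only through $\tilde\nu/(c^2y)$ with $\tilde\nu=|\tilde n|$.

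The decisive step is this $y$-integral. The substitution $y\mapsto Y/c^2$ clears $c$ from the Whittaker argument and converts the nominal weight $|c|^{-2(s+2)}$ of the cell into $|c|^{-2s}$, so the leading term assembles precisely $Z_{m,n}(s)=\tfrac12\sum_{c\neq 0}S_{\alpha,\beta}(m,n,c)/|c|^{2s}$; the remaining $c$-free integral is evaluated by the same Mellin--Whittaker formula \cite[(7.621.11)]{GR} used in Lemma~\ref{lem:residue-poincare}, producing exactly the factor $\Gamma(2s+1)/\bigl(4^{s+1}\pi|\tilde n|^2\Gamma(s-\tfrac k2)\Gamma(s+2+\tfrac k2)\bigr)$. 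Expanding the two residual exponentials $e^{-2\pi\tilde m Y/c^2}$ and $e(\tilde m Yt/c^2)$ in powers of $c^{-2}$ gives an error series whose $j$-th term carries $|c|^{-2s-2j}$; by the Weil bound \eqref{eq:weil-bound} (the trivial bound already suffices) each of these converges locally uniformly and defines a holomorphic, $O_{m,n}(1)$ function $R_{m,n}(s)$ on $\sigma>\tfrac12$. Solving the resulting identity for $Z_{m,n}(s)$ and invoking the meromorphic continuation of $P_{\alpha,m}(\cdot,s,k)$ to $\sigma>\tfrac12$---the shift to $s+2$ keeps $P^*_{\beta,-n}$ within its range of absolute convergence, so the inner product inherits the meromorphy of the first factor---gives the claimed continuation of $Z_{m,n}(s)$. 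I expect the main obstacle to be exactly this final computation: confirming that the $y$-integral yields the stated Gamma factor with the correct power $|c|^{-2s}$, and controlling $R_{m,n}(s)$ uniformly down to $\sigma=\tfrac12$; the careful tracking of representation factors and signs, so that the sum appears as $S_{\alpha,\beta}(m,n,c)$ and not a variant, is a secondary but essential check.
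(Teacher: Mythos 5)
Your route is a legitimate mirror image of the paper's. The paper unfolds against the \emph{second} factor $\overline{P^*_{\beta,-n}(\cdot,s+2,-k)}$ and then inserts the Bruhat/Fourier expansion of $P_{\alpha,m}(\cdot,s,k)$, so the Kloosterman sums $S_{\alpha,\beta}(m,n,c)$ arrive with the correct indices and already weighted by $|c|^{-2s}$, and the only residual $c$-dependence is the phase $e(-\tilde m/(c^2y(x+i)))$, to which the Goldfeld--Sarnak splitting $e(w)=1+(e(w)-1)$ is applied. You unfold against the \emph{first} factor and expand the second, so your cells carry $|c|^{-2(s+2)}$ and the seed parameter $-\tilde n$ (not $\tilde m$) sits over $c^2$; your renormalization $y\mapsto Y/c^2$ does convert the weight to $|c|^{-2s}$ (the measure contributes $c^4$), and I checked that your main term is correct: setting the residual exponentials to $1$ leaves $\tfrac12 i^{-k}\int_0^\infty Y^{-3}\int_{\R}(x+i)^k(x^2+1)^{-k/2-s-2}e\bigl(\tilde n/(Y(x+i))\bigr)\,dx\,dY$, and integrating first in $Y$ (elementary, via $u=1/Y$ and $\int_0^\infty ue^{-wu}\,du=w^{-2}$) and then in $x$ (a beta-type integral $\int_{\R}(x+i)^{k/2-s}(x-i)^{-k/2-s-2}dx$, not the Mellin--Whittaker formula you cite) reproduces exactly $\tfrac12\,\Gamma(2s+1)/\bigl(4^{s+1}\pi|\tilde n|^2\Gamma(s-\tfrac k2)\Gamma(s+2+\tfrac k2)\bigr)$.

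However, there is a genuine gap in your error analysis. You propose expanding $e^{-2\pi\tilde mY/c^2}$ and $e(\tilde m xY/c^2)$ in powers of $c^{-2}$, but the $j$-th Taylor term produces the integral $\int_0^\infty Y^{j-3}e\bigl(\tilde n/(Y(x+i))\bigr)\,dY$, which diverges at $Y=\infty$ for every $j\ge 2$: the phase tends to $1$ there, and the only large-$Y$ decay in your integral came precisely from the factor $e^{-2\pi\tilde mY/c^2}$ that you are expanding away. So the term-by-term series for $R_{m,n}(s)$ does not exist. The fix is to not expand at all: keep the full difference and use $|e^{w}-1|\ll|w|$, valid because $\re w\le 0$ here (as $\tilde m>0$); this gives a per-cell bound $O(|c|^{-2})$ and hence holomorphy and boundedness of $R_{m,n}(s)$ in $\sigma>\tfrac12$ --- i.e., exactly the Goldfeld--Sarnak device the paper uses, transported to your side of the unfolding. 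A second, non-optional item you flag but do not resolve: because you expanded the dual series, your main term's $c$-sum is a priori $\sum_{c\neq 0}S^{*}_{\beta,\alpha}(-n,-m,c)|c|^{-2s}$ (Kloosterman sums for $\rho_L^*$ in weight $-k$), and you must prove from \eqref{eq:kloo-weil-def-2} (using $\rho_L^*=\bar\rho_L$, unitarity, and $\gamma\mapsto\gamma^{-1}$) that these equal $S_{\alpha,\beta}(m,n,c)$, so that the sum is $2Z_{m,n}(s)$ and not a variant. The paper's choice of which factor to unfold sidesteps both issues at once, which is what that choice buys.
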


\begin{proof}
First, suppose $\sigma>1$. 
By a similar unfolding arugument as in the proof of Lemma~\ref{lem:residue-poincare} we have
\begin{equation}
    \langle P_{\alpha,m}(\cdot,s,k), \overline{P^*_{\beta,-n}(\cdot,s+2,-k)}\rangle
    = \int_0^\infty y^{s} e^{2\pi \tilde n y} \int_0^1 \left( P_{\alpha,m}(\tau,s,k), e(\tilde n x) \e_\beta \right) \, dx dy.
\end{equation}
The inner integral can be evaluated by following the proof of Theorem~1.9 of \cite{bruinier} (see the bottom of page 31 to the top of page 33), and we find that
\begin{multline}
    y^s\int_0^1 \left( P_{\alpha,m}(\tau,s,k), e(\tilde n x) \e_\beta \right) \, dx \\
    = 
    \tfrac 12 i^k y\sum_{c \neq 0}\frac{S_{\alpha,\beta}(m,n,c)}{|c|^{2s}}\int_{-\infty}^\infty (x+i)^{-k}(x^2+1)^{k/2-s} e \left(\frac{-\tilde m}{c^2y(x+i)} - \tilde n xy\right) \, dx.
\end{multline}
The clever idea of Goldfeld and Sarnak is to write
\[
    e \left(\frac{-\tilde m}{c^2y(x+i)} \right) = 1 + \left(e \left(\frac{-\tilde m}{c^2y(x+i)} \right) - 1\right),
\]
so that
\begin{multline} \label{eq:poincare-Z-R-unevaluated}
    \langle P_{\alpha,m}(\tau,s,k), \overline{P^*_{\beta,-n}(\tau,s+2,-k)}\rangle 
    \\
    =  Z_{m,n}(s) \int_0^\infty y e^{2\pi \tilde n y} I_{n}(y,s) dy + \sum_{|c|\neq 0} \frac{S_{\alpha,\beta}(m,n,c)}{|c|^{2s}} R_{m,n}(s,c),
\end{multline}
where (by \cite[(3.384.9)]{GR})
\begin{align}
    I_n(y,s) &= i^k\int_{-\infty}^\infty (x+i)^{-k}(x^2+1)^{k/2-s} e(-\tilde n x y) \, dx \\
    &= \pi \frac{(\pi |\tilde n|y)^{s-1}}{\Gamma(s-\frac k2)} W_{-\frac k2, \frac 12 - s}(4\pi |\tilde n|y)
\end{align}
and
\begin{equation}
    R_{m,n}(s,c) = \tfrac 12 i^k \int_0^\infty y e^{2\pi \tilde n y} \int_{-\infty}^\infty\frac{(x+i)^{-k}}{(x^2+1)^{s-k/2}} \left(e\left(\frac{-\tilde m}{c^2y(x+i)}\right)-1\right)e \left(-\tilde nxy\right) dxdy.
\end{equation}
By \cite[(7.621.11)]{GR}, the first term on the right-hand side of \eqref{eq:poincare-Z-R-unevaluated} equals
\begin{equation}
    \frac{\Gamma(2s+1)}{4^{s+1}\pi|\tilde n|^2 \Gamma(s-\frac k2)\Gamma(s+2+\frac k2)} Z_{m,n}(s).
\end{equation}

We claim that the sum in \eqref{eq:poincare-Z-R-unevaluated} involving $R_{m,n}(s,c)$ converges absolutely and uniformly on compact subsets of $\re(s)>\frac 12$.
The quantity $\frac{-2\pi i\tilde m}{c^2 y(x+i)}$ has negative real part, and
for any $\tau$ with negative real part, we have $|e^\tau - 1| \ll |\tau|$.
Thus
\begin{equation}
    |R_{m,n}(s,c)| \ll |c|^{-2} \int_0^\infty e^{2\pi \tilde n y} \int_{-\infty}^\infty (x^2+1)^{-\sigma-1/2} \, dxdy  \ll |c|^{-2}
\end{equation}
for $\sigma>\frac 12$.
This shows that
\[R_{m,n}(s) = \sum_{|c| \neq 0}\frac{S_{\alpha,\beta}(m,n,c)}{|c|^{2s}} R_{m,n}(s,c)\]
is holomorphic and $O_{m,n}(1)$ in $\sigma>\frac 12$. 
\end{proof}

In the case where $m$ and $n$ have the same sign, we have the following similar formula expressing $Z_{m,n}(s)$ as an inner product of Poincar\'e series (see Lemma~2 of \cite{GS}). 
The proof is virtually the same as the proof of Lemma~\ref{lem:poincare-inner-prod-opp-sign}.

\begin{lemma} \label{lem:poincare-inner-prod-same-sign}
    For $m,n>0$ and $\sigma>1$ we have 
    \begin{multline*}
    \langle P_{\alpha,m}(\tau,s,k), P_{\beta,n}(\tau,\overline{s}+2,k)\rangle   
    = \delta_{m,n}\delta_{\alpha,\beta}\left(2\pi \tilde n\right)^{-2s-1} \Gamma(2s+1)\\ + \frac{\Gamma(2s+1)}{4^{s+1}\pi \tilde n^2\Gamma(s+k/2)\Gamma(s-k/2+2)} Z_{m,n}(s) +R_{m,n}(s),  
    \end{multline*}
where $R_{m,n}(s)$ is holomorphic and $O_{m,n}(1)$ in the larger region $\sigma>\frac 12$. In particular, $Z_{m,n}(s)$ has a meromorphic continuation to $\sigma > \frac 12$.
\end{lemma}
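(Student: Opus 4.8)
The plan is to follow the proof of Lemma~\ref{lem:poincare-inner-prod-opp-sign} essentially line by line, unfolding the inner product against the second Poincar\'e series and then inserting the Fourier expansion of $P_{\alpha,m}(\cdot,s,k)$. Since $n>0$, the series $P_{\beta,n}(\cdot,\bar s+2,k)$ lies in $\mathcal L_k(\rho_L)$, so for $\sigma>1$ its defining sum over $\tilde\Gamma_\infty\backslash\tilde\Gamma$ may be used to unfold the integral to the strip $0\le x<1$, $y>0$. Conjugating the identity-coset term of the second series produces the factor $e(-\tilde n x)e^{-2\pi\tilde n y}y^{s+2}$, and together with the measure $y^{-2}$ this reduces the inner product to
\[
    \int_0^\infty y^{s}e^{-2\pi\tilde n y}\int_0^1\bigl(P_{\alpha,m}(\tau,s,k),e(\tilde n x)\e_\beta\bigr)\,dx\,dy,
\]
exactly as in the opposite-sign case but now with $\tilde n>0$. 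The inner $x$-integral extracts the Fourier mode of frequency $\tilde n$ in the $\e_\beta$-component of $P_{\alpha,m}$.

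The one genuinely new feature is the \emph{diagonal term}. In Lemma~\ref{lem:poincare-inner-prod-opp-sign} we had $\tilde n<0$, so the seed (identity-coset) term of $P_{\alpha,m}$, which sits at the positive frequency $\tilde m$, could never contribute to the extracted frequency; here, with $\tilde m,\tilde n>0$, the seed contributes precisely when $\tilde m=\tilde n$ and $\alpha=\beta$, i.e. when $m=n$ and $\alpha=\beta$. First I would isolate this identity-coset contribution, which is $\delta_{m,n}\delta_{\alpha,\beta}e^{-2\pi\tilde n y}y^{s}$ after the $x$-integration; feeding it into the $y$-integral above yields a single Gamma integral evaluating to the claimed term $\delta_{m,n}\delta_{\alpha,\beta}(2\pi\tilde n)^{-2s-1}\Gamma(2s+1)$ (the precise constant being a routine evaluation, matching Lemma~2 of \cite{GS}).

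For the remaining $c\neq0$ part of the Fourier expansion I would apply Theorem~1.9 of \cite{bruinier} to write the frequency-$\tilde n$ coefficient of $P_{\alpha,m}$ in terms of the Kloosterman sums $S_{\alpha,\beta}(m,n,c)$ and a Whittaker integral. The sign of $\tilde n$ only changes the index of the resulting Whittaker function, hence produces the Gamma factor $\Gamma(s+\tfrac k2)\Gamma(s-\tfrac k2+2)$ in place of its opposite-sign analogue, and the Goldfeld--Sarnak device then carries over unchanged: writing $e\bigl(\tfrac{-\tilde m}{c^2y(x+i)}\bigr)=1+\bigl(e(\cdots)-1\bigr)$ splits this part into $Z_{m,n}(s)$ times the explicit factor $\frac{\Gamma(2s+1)}{4^{s+1}\pi\tilde n^2\Gamma(s+k/2)\Gamma(s-k/2+2)}$ (evaluated via \cite[(3.384.9)]{GR} and \cite[(7.621.11)]{GR}), plus a remainder $R_{m,n}(s)=\sum_{c\neq0}|c|^{-2s}S_{\alpha,\beta}(m,n,c)R_{m,n}(s,c)$. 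Using $|e^\tau-1|\ll|\tau|$ for $\re\tau<0$ gives $R_{m,n}(s,c)\ll|c|^{-2}$, so this sum converges absolutely and is holomorphic and $O_{m,n}(1)$ on $\sigma>\tfrac12$, exactly as in the opposite-sign proof.

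I expect the main obstacle to be bookkeeping rather than conceptual: the delicate points are justifying the interchange of summation and integration in the unfolding for $\sigma>1$ and confirming that the diagonal Gamma integral and the $c\neq0$ Whittaker integrals produce precisely the stated constants in the same-sign Whittaker regime. Once the three pieces are assembled, the left-hand inner product is holomorphic for $\sigma>1$ while the diagonal and remainder terms are holomorphic on $\sigma>\tfrac12$, so solving for $Z_{m,n}(s)$ furnishes its meromorphic continuation to $\sigma>\tfrac12$, completing the proof.
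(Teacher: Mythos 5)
Your proposal is correct and takes the same route the paper intends: the paper's entire proof of this lemma is the one-line remark that it is ``virtually the same as the proof of Lemma~\ref{lem:poincare-inner-prod-opp-sign},'' and you have correctly identified the only two points where the same-sign case differs---the identity-coset (diagonal) contribution, which cannot arise when $\tilde m$ and $\tilde n$ have opposite signs, and the flip of the Whittaker index $W_{-\frac k2 \sgn(\tilde n),\,\frac12-s}$ that swaps the Gamma factors into $\Gamma(s+\frac k2)\Gamma(s-\frac k2+2)$. The unfolding, the Goldfeld--Sarnak splitting $e(\cdot)=1+(e(\cdot)-1)$, and the $|c|^{-2}$ bound on $R_{m,n}(s,c)$ all carry over exactly as you describe.
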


Using Lemmas~\ref{lem:residue-poincare}, \ref{lem:poincare-inner-prod-opp-sign}, and \ref{lem:poincare-inner-prod-same-sign} we compute
\begin{equation} \label{eq:residue-tau}
    \res_{s=s_j} Z_{m,n}(s) = \pi^{1-2s_j} (4\tilde m|\tilde n|)^{1-s_j} \frac{\Gamma(s_j+\sgn(\tilde n) \frac k2)\Gamma(2s_j-1)}{\Gamma(s_j - \frac k2)} \sum_{u_\ell\in \mathcal C(\lambda_j)} \bar c_{\alpha \ell}(\tilde m) c_{\beta \ell}(\tilde n).
\end{equation}

The key input into the proof of Theorem~\ref{thm:gs} is the following estimate for $Z_{m,n}(s)$ in the critical strip $\frac 12 < \sigma \leq 1$.

\begin{proposition}
    For $\frac 12 < \sigma \leq 1$ and $|t|\geq 1$ we have
    \begin{equation} \label{eq:Z-bound}
        Z_{m,n}(s)\ll \frac{|t|^{1/2}}{\sigma-\frac 12}.
    \end{equation}
\end{proposition}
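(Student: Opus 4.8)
The plan is to solve for $Z_{m,n}(s)$ using the inner-product identities of Lemmas~\ref{lem:poincare-inner-prod-opp-sign} and~\ref{lem:poincare-inner-prod-same-sign}, and then to estimate the resulting inner product of two Poincar\'e series by combining the recurrence relation~\eqref{eq:poincare-recurrence} with the spectral gap of $\Delta_k$. I will treat the case $m>0>n$ via Lemma~\ref{lem:poincare-inner-prod-opp-sign}; the case $m,n>0$ is handled identically using Lemma~\ref{lem:poincare-inner-prod-same-sign}, once one observes that its extra diagonal term $\delta_{m,n}\delta_{\alpha,\beta}(2\pi\tilde n)^{-2s-1}\Gamma(2s+1)$ becomes exponentially small in $\abs t$ after the gamma prefactor is attached, and the remaining sign patterns follow from~\eqref{eq:kloo-neg-c}. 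Solving Lemma~\ref{lem:poincare-inner-prod-opp-sign} for $Z_{m,n}(s)$ and applying Stirling's formula to the quotient $\Gamma(s-\tfrac k2)\Gamma(s+2+\tfrac k2)/\Gamma(2s+1)$ shows that the resulting gamma prefactor is $O(\abs t^{1/2})$ throughout $\tfrac12<\sigma\le1$, $\abs t\ge1$. Since $R_{m,n}(s)=O(1)$ in this region and $(\sigma-\tfrac12)^{-1}\ge2$ for $\sigma\le1$, it remains to show that the inner product is $O\!\bigl((\sigma-\tfrac12)^{-1}\bigr)$.

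The central idea is to use~\eqref{eq:poincare-recurrence} to replace the Poincar\'e series in the critical strip, where its defining series does not converge, by a resolvent applied to an honest $L^2$ series. For $\tfrac12<\sigma\le1$ and $t\ne0$, the relation~\eqref{eq:poincare-recurrence} gives
\[
    P_{\alpha,m}(\cdot,s,k)=-4\pi\tilde m\,(s-\tfrac k2)\,\bigl(\Delta_k+s(1-s)\bigr)^{-1}P_{\alpha,m}(\cdot,s+1,k),
\]
which makes sense because $\re(s+1)>1$ forces $P_{\alpha,m}(\cdot,s+1,k)\in\mathcal L_k(\rho_L)$ while $s(1-s)$ lies off $-\operatorname{spec}(\Delta_k)$. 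Moving the resolvent onto the second factor using the self-adjointness of $\Delta_k$ and then applying Cauchy--Schwarz yields
\[
    \bigl\lvert\langle P_{\alpha,m}(\cdot,s,k),\overline{P^*_{\beta,-n}(\cdot,s+2,-k)}\rangle\bigr\rvert
    \le 4\pi\abs{\tilde m}\,\abs{s-\tfrac k2}\,\bigl\lVert(\Delta_k+\overline{s(1-s)})^{-1}\bigr\rVert\,\norm{P_{\alpha,m}(\cdot,s+1,k)}\,\norm{P^*_{\beta,-n}(\cdot,s+2,-k)}.
\]

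Two estimates then finish the argument. Because $\Delta_k$ is self-adjoint with real, nonnegative spectrum, the resolvent norm equals the reciprocal of $\dist\bigl(-\overline{s(1-s)},\operatorname{spec}(\Delta_k)\bigr)$; a direct computation gives $\im\bigl(s(1-s)\bigr)=t(1-2\sigma)$, so this distance is at least $\abs{\im(s(1-s))}=\abs t\,(2\sigma-1)$ and
\[
    \bigl\lVert(\Delta_k+\overline{s(1-s)})^{-1}\bigr\rVert\le\frac{1}{\abs t\,(2\sigma-1)}.
\]
This is precisely the origin of the factor $(\sigma-\tfrac12)^{-1}$. For the two norms I would use the pointwise majorization $\abs{P_{\alpha,m}(\tau,w,k)}\le\tfrac12\sum_{\mathfrak g}\im(\gamma\tau)^{\re w}e^{-2\pi\tilde m\im(\gamma\tau)}$, valid for $\re w>1$ since $\rho_L$ is unitary and $\abs{e(\tilde m\gamma\tau)}=e^{-2\pi\tilde m\im(\gamma\tau)}\le1$ for $\tilde m>0$. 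The majorant is a nonoscillating Poincar\'e series depending only on $\re w$ and $\tilde m$; it lies in $L^2$ by its exponential decay at every cusp, and its norm is bounded on the compact intervals $\re w\in[\tfrac32,2]$ and $\re w\in[\tfrac52,3]$. Hence $\norm{P_{\alpha,m}(\cdot,s+1,k)}$ and $\norm{P^*_{\beta,-n}(\cdot,s+2,-k)}$ are $O(1)$ uniformly in $t$. Since $\abs{s-\tfrac k2}\ll\abs t$ for $\abs t\ge1$, the factors of $\abs t$ cancel, the inner product is $O\!\bigl((2\sigma-1)^{-1}\bigr)$, and multiplying by the $O(\abs t^{1/2})$ prefactor gives~\eqref{eq:Z-bound}.

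The main obstacle I anticipate is analytic bookkeeping: one must verify that the meromorphic continuation of $P_{\alpha,m}(\cdot,s,k)$ into $\tfrac12<\sigma\le1$ (away from the finitely many real poles $s_j$) coincides with the $L^2$ element produced by the resolvent identity, so that the Cauchy--Schwarz step is legitimate even though the defining series converges only for $\re s>1$. The resolvent identity sidesteps the worst of this, since it exhibits $P_{\alpha,m}(\cdot,s,k)$ directly as a bounded operator applied to a genuine $L^2$ Poincar\'e series; confirming that this continuation is the same one underlying Lemma~\ref{lem:poincare-inner-prod-opp-sign} requires tracking the continuation carefully, exactly as in Chapter~9 and Appendix~E of~\cite{hejhal}.
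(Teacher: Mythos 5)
Your proposal is correct and follows essentially the same route as the paper: solve the inner-product identities of Lemmas~\ref{lem:poincare-inner-prod-opp-sign} and~\ref{lem:poincare-inner-prod-same-sign} for $Z_{m,n}(s)$, use Stirling to bound the gamma-factor ratio by $|t|^{1/2}$, and combine Cauchy--Schwarz with the recurrence~\eqref{eq:poincare-recurrence} and the resolvent bound $\lVert(\Delta_k+s(1-s))^{-1}\rVert\le\bigl(|t|(2\sigma-1)\bigr)^{-1}$ to extract the factor $(\sigma-\tfrac12)^{-1}$ from the norm of the Poincar\'e series in the critical strip. The only cosmetic differences are that you transfer the resolvent to the second factor by self-adjointness rather than bounding $\lVert P_{\alpha,m}(\cdot,s,k)\rVert$ directly, and you spell out the $O(1)$ norm bounds and the treatment of the diagonal term that the paper leaves implicit.
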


\begin{proof}
    The Poincar\'e series with second argument equal to $s+2$ converges absolutely in the region $\frac 12 \leq \sigma \leq 1$ and is bounded in that region by a function that depends only on $\sigma$.
    Thus it has norm $\ll 1$.
    By Stirling's formula we have
    \begin{equation}
        \frac{\Gamma(s \pm \frac k2)\Gamma(s\mp \frac k2 + 2)}{\Gamma(2s-1)} \ll |t|^{1/2},
    \end{equation}
    so from Lemmas~\ref{lem:poincare-inner-prod-opp-sign} and \ref{lem:poincare-inner-prod-same-sign} and the Cauchy-Schwarz inequality we obtain the bound
    \begin{equation} \label{eq:zmns-bound-poincare}
        Z_{m,n}(s) \ll \lVert P_{\alpha,m}(\tau,s,k) \rVert |t|^{1/2}.
    \end{equation}
    The remainder of the proof follows the proof of Lemma~1 of \cite{GS}. 

    By Corollary~5.7 of \cite{HS} we have
    \[
        \lVert R_{s(1-s)} \rVert \leq \frac{1}{|\im(s(1-s))|} = \frac{1}{|t|(2\sigma-1)},
    \]
    where $R_\lambda = (\Delta_k+\lambda)^{-1}$ is the resolvent.
    By \eqref{eq:poincare-recurrence} we have  $\norm{P_{\alpha,m}(\tau,s,k)} \ll (\sigma-\frac 12)^{-1}$ and together with \eqref{eq:zmns-bound-poincare}, this completes the proof.
\end{proof}

\begin{proof}[Proof of Theorem~\ref{thm:gs}]
    It suffices to prove the theorem for $x\in \Z+\frac 12$.
    Let $T = x^{\delta}$ with $\delta \in (0, 1)$ to be chosen later and let $y$ be a positive real number with $y\neq 1$.
    From Chapter~17 of \cite{davenport} we get
    \begin{equation}
        \frac 1{2\pi i} \int_{a-iT}^{a+iT} \frac{y^s}{s} \, ds = 
        \delta_{y>1}
        + O\left(y^a \min\left(1, \frac{1}{T|\log y|}\right)\right),
    \end{equation}
    where $a$ is any positive number.
    We take $a = B+\ep$. It follows that
    \begin{equation}
        \tfrac 12\sum_{0<|c|\leq x} \frac{S_{\alpha\beta}(m,n,c)}{|c|} \\= \frac 1{2\pi i} \int_{a-iT}^{a+iT} Z_{m,n}\pfrac{s+1}{2} \frac{x^s}{s} \, ds + O\left(\frac{x^{B+\ep}}{T}\sum_{c=1}^\infty \frac{1}{c^{1+\ep}|\log (x/c)|} \right).
    \end{equation}
    In the ranges $c\leq \frac 34x$ and $c\geq \frac 54x$ we have $|\log(x/c)|\gg 1$, so the contribution from those terms is $O(x^{B+\ep}/T)$.
    For the remaining terms with $|\frac cx-1|\leq \frac 14$ we have $\log(x/c) \gg \frac{|c-x|}{x}$, and we find that the contribution from those terms is only larger by a factor of $\log x$.
    As in the proof of the prime number theorem, we shift the contour of integration to $\re(s) = \ep$.
    By \eqref{eq:Z-bound} and the Phragm\'en-Lindel\"of principle, we have
    \begin{equation}
        Z_{m,n}(s) \ll |t|^{\frac 12 - \frac{\sigma}{2B}+\ep}
    \end{equation}
    for all $s$ along the box with corners $\ep-iT$ and $a+iT$.
    Therefore
    \begin{equation}
        \tfrac 12\sum_{0<|c|\leq x} \frac{S_{\alpha\beta}(m,n,c)}{c} = 2\sum_{\frac 12 < s_j < 1} \res_{s=s_j} Z_{m,n}(s) \frac{x^{2s_j-1}}{2s_j-1} + O\left(\frac{x^{B+\ep}}{T} + T^{\frac 12+\ep}x^{\ep}\right).
    \end{equation}
    We take $T = x^{2B/3}$ to balance the error terms.
    If we set
    \begin{equation} \label{eq:aj-formula}
        a_j = 4^{2-s_j}\pi^{1-2s_j} (\tilde m|\tilde n|)^{1-s_j} \frac{\Gamma(s_j+\sgn(\tilde n) \frac k2)\Gamma(2s_j-1)}{(2s_j-1)\Gamma(s_j - \frac k2)} \sum_{u_\ell\in \mathcal C(\lambda_j)} \bar c_{\alpha \ell}(\tilde m) c_{\beta \ell}(\tilde n)
    \end{equation}
    then the theorem follows from \eqref{eq:residue-tau}.
\end{proof}

\appendix
\section{Completions of $\phi$, $\psi$, and $\chi$.}

\label{appendix}
Here we work out the completions of the third order mock theta functions $\phi$, $\psi$, $\nu$, $\chi$, $\sigma$, $\xi$ and $\rho$, and we prove Lemmas~\ref{lem:order-3-2} and \ref{lem:order-3-3}.
The information necessary to prove these transformations is given in \cite{Gordon-McIntosh}, and we follow the general procedure developed by Zwegers in \cite{zwegers-f} (see also \cite{zwegers-thesis}).
Let
\[
    {V_{N,a}}(\tau) := \frac{i^{3/2}}{\sqrt{2N}} \int_{0}^{i\infty} \frac{\theta_{N,a}(z)}{\sqrt{-i(z\tau-1)}}dz.
\]

We begin with the mock theta functions $\phi$, $\psi$ and $\nu$, where
\[\nu(q) = \sum_{n=0}^\infty \frac{q^{n(n+1)}}{(-q;q^2)_{n+1}}.\]
We claim that these three mock theta functions satisfy the following transformations. 
Let 
\[F_{(3,2)}(\tau) = \left(\begin{array}{ccc}\sqrt{8}q^{-1/24}\psi(\tau)\\2q^{1/3}\nu(\tau+\frac12)
\\q^{-1/96}\phi\pfrac{\tau}{4}\\q^{-1/96}\phi\pfrac{\tau+1}{4}
\\q^{-1/96}\phi\pfrac{\tau+2}{4}\\q^{-1/96}\phi\pfrac{\tau+3}{4}\end{array}\right).\]
Then $F_{(3,2)}$ transforms by 
\[F_{(3,2)}(\tau+1)=M_TF_{(3,2)}(\tau),\qquad F_{(3,2)}(-1/\tau) = \sqrt{-i\tau}(M_SF_{(3,2)}+R_{(3,2)})(\tau),\]
where 
\[
M_T = \left(\begin{smallmatrix}
\zeta_{24}^{-1} & 0 & 0 & 0 & 0 & 0 \\
0 & \zeta_3 & 0 & 0 & 0 & 0 \\
0 & 0 & 0 & \zeta_{96}^{-1} & 0 & 0 \\
0 & 0 & 0 & 0 & \zeta_{96}^{-1} & 0 \\
0 & 0 & 0 & 0 & 0 & \zeta_{96}^{-1} \\
0 & 0 & \zeta_{96}^{-1} & 0 & 0 & 0 \\
\end{smallmatrix}\right),\quad
M_S = \left(\begin{smallmatrix}
0 & 0 & 1\vphantom{\zeta_{24}^{-1}} & 0 & 0 & 0 \\
0 & 0 & 0 & 0 & 1\vphantom{\zeta_{24}^{-1}} & 0 \\
1\vphantom{\zeta_{24}^{-1}} & 0 & 0 & 0 & 0 & 0 \\
0 & 0 & 0 & 0 & 0 & \zeta_{16}^3 \\
0 & 1\vphantom{\zeta_{24}^{-1}} & 0 & 0 & 0 & 0 \\
0 & 0 & 0 & \zeta_{16}^{-3} & 0 & 0 \\
\end{smallmatrix}\right),
\]
and
\[R_{(3,2)} = \begin{pmatrix}
\sqrt{2}(V_{24,2}+V_{24,10}+V_{24,14}+V_{24,22})\\
2(V_{24,8}+V_{24,16})\\
-(V_{24,1}+V_{24,5}+V_{24,7}+V_{24,11}-V_{24,13}-V_{24,17}-V_{24,19}-V_{24,23})\\
-(V_{24,1}-iV_{24,5}-V_{24,7}-iV_{24,11}-iV_{24,13}-V_{24,17}-iV_{24,19}+V_{24,23})\\
-(V_{24,1}-V_{24,5}+V_{24,7}-V_{24,11}+V_{24,13}-V_{24,17}+V_{24,19}-V_{24,23})\\
-(V_{24,1}+iV_{24,5}-V_{24,7}+iV_{24,11}+iV_{24,13}-V_{24,17}+iV_{24,19}+V_{24,23})
\end{pmatrix}.\]
The transformation under $\tau\mapsto \tau+1$ is clear. 
We briefly explain how to prove the $\tau\mapsto -1/\tau$ transformation here; a full proof is given in \cite{anderson-thesis}.

The first, second, third, and fifth components of the transformation under $S=\ptmatrix0{-1}10$ follow directly from the identities listed in Section~4 of \cite{Gordon-McIntosh}, possibly with a simple change of variable, alongside the identities
\begin{align}\label{eq:W-klein}
    \frac{i}{\tau}W\pfrac{\pi i}{\tau}&=\frac{1}{\sqrt{6}}(-V_{12,1}-V_{12,5}-V_{12,7}-V_{12,11})(\tau),\\
    \frac{i}{\tau}W_1\pfrac{\pi i}{\tau}&=\frac{1}{\sqrt{6}}(-V_{12,1}+V_{12,5}-V_{12,7}+V_{12,11})(\tau),\\
    \frac{i}{\tau}W_2\pfrac{\pi i}{\tau}&=\frac{1}{\sqrt{6}}(-V_{12,4}-V_{12,8})(\tau).
\end{align}
derived from Lemmas~5.2.2 and 5.2.3 in \cite{klein-thesis} and basic properties of the unary theta functions. 
The remaining two transformations are not as immediate, but follow from the expansion
\[\ptmatrix{1}{1}{0}{4}S\ptmatrix{1}{-1}{0}{4}^{-1}\tau=\ptmatrix{1}{1}{0}{2}S\ptmatrix{1}{-1}{0}{2}^{-1}\ptmatrix{1}{0}{0}{2}S\ptmatrix{1}{1}{0}{2}^{-1}\ptmatrix{1}{0}{0}{2}S\ptmatrix{1}{0}{0}{2}^{-1}\tau,\]
alongside three of the transformations from Section 4 of \cite{Gordon-McIntosh}. The process of simplifying is tedious but straightforward using the modularity properties of the unary theta functions. 

We follow Zwegers to form the completions. 
Define $G_{(3,2)}$ in the same way as $R_{(3,2)}$ but with $V_{24,h}$ replaced by $\V_{24,h}$. 
Let $\hat H_{(3,2)}=F_{(3,2)}-G_{(3,2)}.$
Then $\hat H_{(3,2)}$ transforms as
\begin{equation} \label{eq:H-32-transform}
\hat H_{(3,2)}(\tau+1) = M_T \hat H_{(3,2)}(\tau),\quad \hat H_{(3,2)}(-1/\tau) = \sqrt{-i\tau} M_S \hat H_{(3,2)}(\tau).
\end{equation}
If we let $2\tilde\nu$ denote the second component of $\hat H_{(3,2)}$ then Lemma~\ref{lem:order-3-2} can be easily verified by a computer algebra system using \eqref{eq:H-32-transform}.

The process for constructing the vector-valued harmonic Maass form encoding $\chi$, $\rho$, $\xi$, and $\sigma$ is similar, and the details can again be found in \cite{anderson-thesis}. 
Define
\begin{align}
\rho(q) &= \sum_{n=0}^\infty \frac{q^{2n(n+1)}(q;q^2)_{n+1}}{(q^3;q^6)_{n+1}}, \\
\xi(q) &= 1+2q\sum_{n=1}^\infty \frac{q^{6n(n-1)}}{(q;q^6)_n(q^5;q^6)_n},
\\
\sigma(q) &= \sum_{n=1}^\infty \frac{q^{3n(n-1)}}{(-q;q^3)_n(-q^2;q^3)_n}.
\end{align}
We define $\hat H_{(3,3)} = F_{(3,3)} - G_{(3,3)}$, where
\begin{multline}
    F_{(3,3)}(\tau) = 
    \Big(2\sqrt{3}\,q^{7/8}\sigma(\tau) , \,
\sqrt{3}\,\xi(\tfrac {\tau}{2}) , \,
\sqrt{3}\,\xi(\tfrac {\tau+1}{2}) , \,
2q^{-1/72}\chi(\tfrac {\tau}{3}) , \,
2q^{-1/72}\chi(\tfrac {\tau+1}{3}) , \,
2q^{-1/72}\chi(\tfrac {\tau+2}{3}) , \\
2q^{1/9}\rho(\tfrac {\tau}{6}), \,
2q^{1/9}\rho(\tfrac {\tau+1}{6}) , \,
2q^{1/9}\rho(\tfrac {\tau+2}{6}), \,
2q^{1/9}\rho(\tfrac {\tau+3}{6}), \,
2q^{1/9}\rho(\tfrac {\tau+4}{6}), \,
2q^{1/9}\rho(\tfrac {\tau+5}{6})\Big)^T
\end{multline}
and
\[G_{(3,3)}=\begin{pmatrix}
\sqrt{3}(-\V_{18,3}+ \V_{18,15})\\
\sqrt{3}(\V_{18,6}+ \V_{18,12}) \\
\sqrt{3}(-\V_{18,6}+ \V_{18,12}) \\
-(\V_{18,1}-\V_{18,5}+\V_{18,7}-\V_{18,11}+\V_{18,13}-\V_{18,17}) \\
-(\V_{18,1}-\zeta_3^{-1}\V_{18,5}+\zeta_3\V_{18,7}-\zeta_3\V_{18,11}+\zeta_3^{-1}\V_{18,13}-\V_{18,17}) \\
-(\V_{18,1}-\zeta_3\V_{18,5}+\zeta_3^{-1}\V_{18,7}-\zeta_3^{-1}\V_{18,11}+\zeta_3\V_{18,13}-\V_{18,17}) \\
-(\V_{18,2}+\V_{18,4}-\V_{18,8}-\V_{18,10}+\V_{18,14}+\V_{18,16}) \\
-(\zeta_6^{-1}\V_{18,2}+\zeta_3^{-1}\V_{18,4}-\V_{18,8}+\V_{18,10}+\zeta_6\V_{18,14}+\zeta_3\V_{18,16}) \\
-(\zeta_3^{-1}\V_{18,2}+\zeta_3\V_{18,4}-\V_{18,8}-\V_{18,10}+\zeta_3\V_{18,14}+\zeta_3^{-1}\V_{18,16}) \\
-(-\V_{18,2}+\V_{18,4}-\V_{18,8}+\V_{18,10}-\V_{18,14}+\V_{18,16}) \\
-(\zeta_3\V_{18,2}+\zeta_3^{-1}\V_{18,4}-\V_{18,8}-\V_{18,10}+\zeta_3^{-1}\V_{18,14}+\zeta_3\V_{18,16})  \\
-(\zeta_6\V_{18,2}+\zeta_3\V_{18,4}-\V_{18,8}+\V_{18,10}+\zeta_6^{-1}\V_{18,14}+\zeta_3^{-1}\V_{18,16}) 
\end{pmatrix}.\]
Then
\[\hat H_{(3,3)}(\tau+1) = M_T \hat H_{(3,3)}(\tau),\quad \hat H_{(3,3)}(-1/\tau) = \sqrt{-i\tau}M_S \hat 
 H_{(3,3)}(\tau),\]
where 
\[M_T=\lrp{\begin{smallmatrix}
\zeta_8^7 & 0 & 0 & 0 & 0 & 0 & 0 & 0 & 0 & 0 & 0 & 0  \\
0 & 0 & 1 & 0 & 0 & 0 & 0 & 0 & 0 & 0 & 0 & 0  \\
0 & 1 & 0 & 0 & 0 & 0 & 0 & 0 & 0 & 0 & 0 & 0  \\
0 & 0 & 0 & 0 & \zeta_{72}^{-1} & 0 & 0 & 0 & 0 & 0 & 0 & 0  \\
0 & 0 & 0 & 0 & 0 & \zeta_{72}^{-1} & 0 & 0 & 0 & 0 & 0 & 0  \\
0 & 0 & 0 & \zeta_{72}^{-1} & 0 & 0 & 0 & 0 & 0 & 0 & 0 & 0  \\
0 & 0 & 0 & 0 & 0 & 0 & 0 & \zeta_9 & 0 & 0 & 0 & 0  \\
0 & 0 & 0 & 0 & 0 & 0 & 0 & 0 & \zeta_9 & 0 & 0 & 0  \\
0 & 0 & 0 & 0 & 0 & 0 & 0 & 0 & 0 & \zeta_9 & 0 & 0  \\
0 & 0 & 0 & 0 & 0 & 0 & 0 & 0 & 0 & 0 & \zeta_9 & 0  \\
0 & 0 & 0 & 0 & 0 & 0 & 0 & 0 & 0 & 0 & 0 & \zeta_9  \\
0 & 0 & 0 & 0 & 0 & 0 & \zeta_9 & 0 & 0 & 0 & 0 & 0 
\end{smallmatrix}},\quad M_S=\left(\begin{smallmatrix}
0 & 0 & 0 & 0 & 0 & 0 & -1 & 0 & 0 & 0 & 0 & 0  \\
0 & 0 & 0 & 1 & 0 & 0 & 0 & 0 & 0 & 0 & 0 & 0  \\
0 & 0 & 0 & 0 & 0 & 0 & 0 & 0 & 0 & 1 & 0 & 0  \\
0 & 1 & 0 & 0 & 0 & 0 & 0 & 0 & 0 & 0 & 0 & 0  \\
0 & 0 & 0 & 0 & 0 & 0 & 0 & 0 & \zeta_{36} & 0 & 0 & 0  \\
0 & 0 & 0 & 0 & 0 & 0 & 0 & 0 & 0 & 0 & \zeta_{36}^{-1} & 0  \\
-1 & 0 & 0 & 0 & 0 & 0 & 0 & 0 & 0 & 0 & 0 & 0  \\
0 & 0 & 0 & 0 & 0 & 0 & 0 & 0 & 0 & 0 & 0 & \zeta_{36}  \\
0 & 0 & 0 & 0 & \zeta_{36}^{-1} & 0 & 0 & 0 & 0 & 0 & 0 & 0  \\
0 & 0 & 1 & 0 & 0 & 0 & 0 & 0 & 0 & 0 & 0 & 0  \\
0 & 0 & 0 & 0 & 0 & \zeta_{36} & 0 & 0 & 0 & 0 & 0 & 0  \\
0 & 0 & 0 & 0 & 0 & 0 & 0 & \zeta_{36}^{-1} & 0 & 0 & 0 & 0 
\end{smallmatrix}\right).\]
Finally, we write $\hat H_{(3,3)} = (H_1, \ldots ,H_{12})^T$ and set 
\begin{alignat*}{2}
\tilde{\sigma} &= \frac{1}{2\sqrt{3}}H_{1}
&\tilde{\xi}^{(k)} &= \frac{1}{2\sqrt{3}}\sum_{\ell=0}^1 (-1)^{k \ell} H_{\ell+2}\\
\tilde{\chi}^{(k)} &= \frac{1}{6}\sum_{\ell=0}^2 \zeta_3^{-k\ell} H_{\ell+4} \qquad 
&\tilde{\rho}^{(k)} &= \frac{1}{12}\sum_{\ell=0}^5 \zeta_6^{-k\ell} H_{\ell+7}.
\end{alignat*}
The holomorphic parts of $\tilde{\xi}^{(k)},\tilde{\chi}^{(k)},$ and $\tilde{\rho}^{(k)}$ encode the coefficients of $\xi,\chi,\rho$ of index  $n\equiv k$ modulo $2,3$ and $6$, respectively. 
Lemma~\ref{lem:order-3-3} now follows from a computer algebra verification.


\bibliographystyle{plain}
\bibliography{bibliography}
\end{document}